\documentclass[11pt,reqno]{amsart}
\usepackage{color}
\usepackage[active]{srcltx}
\usepackage{a4wide}
\usepackage{amssymb, amsmath, mathtools, geometry}
\usepackage{graphicx}
\usepackage{float}
\usepackage[active]{srcltx}
\usepackage[ansinew]{inputenc}
\usepackage{hyperref}
\usepackage{tikz}
\usepackage{amsmath, amssymb, amsfonts}
\usepackage{tikz}
\usepackage{pgfplots}

\theoremstyle{plain}
\newtheorem{theorem}{Theorem}[section]
\newtheorem{maintheorem}{Theorem}
\newtheorem{lemma}[theorem]{Lemma}
\newtheorem{proposition}[theorem]{Proposition}

\theoremstyle{remark}

\newtheorem{definition}{Definition}

\newtheorem{remark}[theorem]{Remark}

\newcommand{\field}[1]{\mathbb{#1}}
\newcommand{\RR}{\field{R}}

\newcommand{\dpt}{\displaystyle}
\newcommand{\NN}{\field{N}}

\newcommand{\EU}{{\mathbb{S}}}

\newcommand{\loc}{\textnormal{loc}}

\numberwithin{equation}{section}

\usepackage[pagewise]{lineno}

\begin{document}

\title[Shilnikov-Hopf bifurcations]{The Bosch and Sim\'o conjecture on the  \\ Shilnikov-Hopf bifurcation}
\author[Alexandre Rodrigues]{Alexandre A. P. Rodrigues\\ ISEG -- Lisbon School of Economics and Management \\   University of Lisbon -- ISEG Research \\ Rua do Quelhas, 6, 1200-781 Lisboa Portugal}
 \email{arodrigues@iseg.ulisboa.pt}

\date{\today}

\thanks{The author (ORCID 0000-0001-8182-9889) was supported  by national funds through FCT -- Funda\c{c}\~ao para a Ci\^encia e a Tecnologia, I.P., in the framework of the project with reference UID/06522/2025 (ISEG Research, ISEG Lisbon School of Economics \& Management, Universidade de Lisboa, Lisbon, Portugal).}

\subjclass[2020]{Primary: 37D45;   Secondary:  34C28,  37D05, 34C37, 37G35   \\
\emph{Keywords:} Bosch-Sim\'o conjecture; Shilnikov-Hopf bifurcation;  Rank-one dynamics, ``Large'' strange attractors, Superstable sinks. }

\begin{abstract}
We provide a rigorous mathematical proof of the Bosch-Simó conjecture (\emph{M. Bosch, C. Sim\'o (1993), Physica D,  217--229}) regarding the abundance of strange attractors in the unfolding of a Shilnikov-Hopf bifurcation. By constructing a general class of dissipative return maps that capture the essential global geometry of this codimension-two scenario, we demonstrate that ``large" strange attractors -- in the sense of Broer-Sim\'o-Tatjer -- are a persistent and statistically robust feature of the dynamics.

We prove that, for a parameter set of positive Lebesgue measure, the system admits attractors supporting a unique Sinai-Ruelle-Bowen (SRB) measure. The proof relies on a dimensional reduction to a rank-one model, enabled by strong transverse contraction and singular angular expansion. Furthermore, we show that this chaotic regime is interspersed with sequences of parameters yielding superstable periodic orbits, providing a  characterisation of the transition between ordered and chaotic motion in the singular limit of the bifurcation.
 \end{abstract}

\maketitle
 

\section{Introduction}

The interaction between local bifurcations and global invariant structures is a central theme in modern dynamical systems. Particularly rich dynamics arise when oscillatory instabilities interact with global homoclinic or heteroclinic phenomena. A paradigmatic example is provided by \emph{Hopf--Shilnikov bifurcations}, in which a periodic solution created in a Hopf bifurcation interacts with a homoclinic cycle to a saddle--focus equilibrium. Such bifurcations are known to generate complex dynamics, including infinitely many periodic orbits, sensitive dependence on initial conditions, and chaotic invariant sets \cite{HK1993, Shilnikov et al, VBS2010}. These bifurcations have been numerically identified in equations governing electronic oscillators \cite{Freire1993} and explain the presence of spiraling strange attractors in the \emph{Belousov--Zhabotinsky} reaction data \cite{Argoul1987}.

Shilnikov's classical results established that homoclinic loops to a saddle--focus lead to complicated dynamics, characterised by positive topological entropy and a countable infinity of periodic orbits \cite{hom, Shilnikov65, Shilnikov67A, Shilnikov70}. When combined with a Hopf bifurcation, this mechanism gives rise to a codimension-two scenario, producing intricate bifurcation structures and nonuniform hyperbolicity \cite{BS93, Champneys, Freire1993, HK1993}.

Motivated by numerical investigations in the context of optical experiments, Miquel Bosch and Carles Sim\'o \cite[pp.~228]{BS93} formulated a conjecture concerning the typical dynamics unfolding near Shilnikov--Hopf bifurcations. Their conjecture predicts the emergence of robust chaotic attractors organised by strong contraction in transversal directions and nonuniform expansion along an angular variable. These attractors are expected to persist for large sets (with respect to the Lebesgue measure) of parameters and to exhibit statistical properties characteristic of physical chaos \cite{WY, WY2008}.

A rigorous framework for understanding such dynamics is provided by the theory of \emph{rank-one attractors}, developed in \cite{MO2015, WO2011, WY2001, WY, WY2003, WY2006, WY2008}. In this setting, systems with one effective expanding direction and strong contraction in the remaining directions can support strange attractors carrying Sinai--Ruelle--Bowen (SRB) measures for parameter sets of positive Lebesgue measure.

An important geometric aspect of the Bosch--Sim\'o conjecture is the \emph{largeness} of the resulting chaotic attractors. In contrast to the thin Cantor-like invariant sets found near homoclinic tangencies \cite{PT}, the attractors observed numerically in \cite{BS93} occupy ``large'' regions of phase space  and display complex folding structures. This notion was formalised by Broer, Sim\'o, and Tatjer \cite{BST98}, who introduced the concept of \emph{``large'' strange attractors}, emphasising their robustness and geometric visibility. Related phenomena were rigorously analysed in \cite{Rodrigues2022} in the unfolding of heteroclinic attractors. This work has highlighted the ubiquity of large strange attractors in systems where two-dimensional heteroclinic connections are broken, providing the motivation for the present study of Shilnikov--Hopf configurations within the rank-one framework.

\subsection*{Novelty of the article}
In this paper, we prove the \emph{abundance} (in the sense of \cite{MV93}) of ``large'' strange attractors supporting SRB measures for positive Lebesgue measure sets of parameters. Our results provide a rigorous mathematical framework for the chaotic dynamics observed numerically by   \cite{BS93} and connect Shilnikov--Hopf homoclinic bifurcations with the broader theory of rank-one chaos \cite{BST98, WY2008}. We show that the Poincar\'e return map associated with Shilnikov--Hopf bifurcations fits into this framework after appropriate rescaling and reduction.

Specifically, we prove that the first return map to a global cross-section admits a strongly contracting direction, leading to a reduction to an effective one-dimensional dynamics. We then verify the standing hypotheses required for the existence of rank-one strange attractors in the sense of Wang and Young \cite{WY2008}. Moreover, we establish the existence of a sequence of parameters for which the system possesses \emph{superstable periodic cycles} (of period 2). These orbits, characterized by the vanishing of the derivative along the expanding direction, accumulate at the boundary of chaotic regimes. They are intimately related to the presence of critical points in the effective one-dimensional return maps and play a crucial role in the transition from periodic to chaotic behavior.

\subsection*{Structure of the article}

The present work is organized as follows. 
In Section~\ref{sec:setting}, we define the object of study, providing the geometric context of the Shilnikov--Hopf bifurcation under consideration, following the introduction of  mathematical preliminaries in Section~\ref{s:preliminaries}.

The core of the geometric construction begins in Section~\ref{s: local map}, where we define the local cross-sections and derive the local and global transition maps near the ghost of the homoclinic cycle. This section also details the domain geometry and the global reinjection map, concluding with a set of \emph{Digestive Remarks} essential for understanding the dynamical configuration. Section~\ref{s: first return} is dedicated to the derivation of the first return map, which serves as the fundamental tool for the subsequent dynamical analysis. 

The main results of the paper are stated in Section~\ref{s:main}. In  \ref{s: invariant curve}, we prove the existence of an invariant curve for the global map $G_b$, a result that relies on the radial contraction. Section~\ref{s: theory} provides an overview of the theory of rank-one strange attractors, including a discussion on Misiurewicz dynamics and the implications of critical point behavior.

The final proof of the chaotic nature of the system is presented in the concluding sections. Section~\ref{s: Preparatory} contains preparatory technical results, including derivative estimates and the non-degeneracy of critical points. In Section~\ref{s: f_b is M}, we show that the reduced map is of Misiurewicz type for a set of parameters with positive Lebesgue measure. Finally, Sections~\ref{proof_main_1} and \ref{proof_main_2} are devoted to the formal proofs of Theorem~\ref{main1} and Theorem~\ref{main2}, respectively, establishing the existence of an abundant set of strange attractors. The paper concludes with brief remarks in Section~\ref{sec:concluding_remarks}. Throughout this paper, we have endeavored to provide a self-contained exposition, complemented by illustrative figures to enhance readability.
 
\section{Preliminaries}
\label{s:preliminaries}

In this section, we introduce the terminology and notation used throughout this paper.
Let $M \subset \mathbb{R}^3$ be a $C^r$ manifold ($r \geq 3$) embedded in a Riemanian  space ($\|\star \|$ is the usual norm). 
For any $A \subset M$, let ${\rm int}(A)$ and $\overline{A}$ denote the \emph{topological interior} and \emph{closure} of $A$, respectively. \\

\begin{definition}
Let $F : M \to M$ be a $C^3$ diffeomorphism and $A \subset M$ a compact set such that $F(A) \subset A$ (i.e. $A$ is $F$-invariant). \\
\begin{enumerate}
    \item The \emph{forward orbit} of $z \in M$ under $F$ is the set
    \[
    {\rm Orb}(z) = \{F^j(z) : j \in \mathbb{N}_0\}.
    \]
    \item The \emph{$\omega$-limit set} of $z \in M$ is defined as
    \[
    \omega(z) = \bigcap_{n \in \mathbb{N}} \overline{\{F^k(z) : k \geq n\}}.
    \]
    \item The \emph{basin of attraction} of $A$ is the set
    \[
    \mathcal{B}(A) = \{z \in M : \omega(z) \subset A\}.
    \]
    \item An $F$-invariant set $A$ is \emph{topologically transitive} if there exists $z \in A$ such that
    \[
    \overline{{\rm Orb}(z)} = A.
    \]
    \item An $F$-invariant compact set $A$ is called a \emph{strange attractor} if there exists $z \in \mathcal{B}(A)$ such that:\\
    \begin{enumerate}
        \item $\overline{{\rm Orb}(z)} = A$;\\
        \item $\displaystyle \liminf_{n \to \infty} \frac{1}{n} \ln \|DF^n(z)\| > 0$;\\
        \item ${\rm int}(\mathcal{B}(A)) \neq \emptyset$.\\
    \end{enumerate}
\end{enumerate}
\end{definition}

\begin{definition}[Adapted from \cite{MO2015}]
We say that $F : M \to M$ (as above) possesses a \emph{strange attractor supporting an ergodic Sinai--Ruelle--Bowen (SRB) measure} $\nu$ if:\\
\begin{enumerate}
    \item $F$ exhibits an irreducible\footnote{It cannot be decomposed as the union of two or more disjoint attractors.} strange attractor
    \[
    \Omega = \bigcap_{m=0}^{\infty} F^m(\overline{U}),
    \]
    in an $F$-invariant open set $U \subset M$.\\
    \item The conditional measures of $\nu$ on unstable manifolds are absolutely continuous with respect to the Riemannian volume on those leaves.\\
    \item For Lebesgue almost every $z \in U$, the time average of any continuous function
    $\Phi : U \to \mathbb{R}$ converges to the spatial average:
    \[
    \lim_{n \to \infty} \frac{1}{n} \sum_{i=0}^{n-1} \Phi(F^i(z)) = \int \Phi \, d\nu.
    \]
\end{enumerate}
\end{definition}

The measure $\nu$ is physical in the sense that it describes the asymptotic distribution of orbits for a set of initial conditions of positive Lebesgue measure.
 We  say that a vector field possesses  a \emph{strange attractor supporting an ergodic  SRB measure} if the first return map on a cross section does. \\
 
 \begin{definition} [Adapted from \cite{BS93, BST98, Rodrigues2021}]
 \label{large_def}
Let $F: \mathcal{A} \to \mathcal{A}$ be map on a compact annulus\footnote{Also called by circloid.} $\mathcal{A} = I \times \EU^1$, where $I\subset \RR^+$ is a compact interval. A strange attractor $\Lambda \subset \mathcal{A}$ is said to be \emph{``large''} if it is not contained in any contractible subset of $\mathcal{A}$. Specifically, $\Lambda$ satisfies:\\
\begin{enumerate}
    \item it is non-contractible within the annulus, meaning it wraps around the central hole of $\mathcal{A}$ in the angular direction. \\
    \item  its dynamics are characterised by strong expansion in the $\EU^1$ direction and strong contraction in the $I$ direction, such that the image $F(\mathcal{A})$ is a ``thin" set that stretches across the full angular extent of the annulus.
\end{enumerate}
\end{definition}

\section{Setting}
\label{sec:setting}

Our object of study is the dynamics around the ``ghost'' of a homoclinic network associated with a saddle-focus undergoing a supercritical Hopf bifurcation, for which we provide a rigorous description here. \\

\subsection{Object of study}
\label{subsec:object_of_study}

We consider a three-parameter family of $C^3$-vector fields $f_{(\varepsilon,b_+, b_-)}: \mathbb{R}^3 \to \mathbb{R}^3$ defined by the Initial Value Problem:
\begin{equation}
\label{eq:ivp2}
\begin{cases}
\dot{x} = f_{(\varepsilon,b_+, b_-)}(x) \\
x(0) = x_0 \in \mathbb{R}^3
\end{cases}
\end{equation}
where $\varepsilon, b_+, b_- \in (-\tau, \tau)$ are independent parameters and $\tau > 0$ is arbitrarily small. The flow, denoted by $\Phi(t, x)$ for $x \in \mathbb{R}^3$ and $t \in \mathbb{R}$, satisfies the following properties when $b_+=b_-= 0$ (see Figure \ref{Imagem 1}):\\

\begin{itemize}
    \item[\textbf{(P1)}] For $\varepsilon < 0$, there exists a double homoclinic connection associated with a saddle-focus equilibrium $O=(0,0,0)$.  The linear part of $f_{(\varepsilon,0)}$ at $O$,   $Df_{(\varepsilon,0)}(O)$, possesses eigenvalues $\lambda$ and $\varepsilon \pm i \omega$, with $\lambda, \omega \in \mathbb{R}^+$. \\  
    
    \item[\textbf{(P2)}] At $\varepsilon=0$, the equilibrium $O$ undergoes a supercritical Hopf bifurcation, giving rise to a stable periodic solution $C_\varepsilon$ for $\varepsilon > 0$, admitting $\alpha>0$ as the \emph{first Lyapunov exponent} of the normal form.  \\
    
    \item[\textbf{(P3)}] For $\varepsilon > 0$, the equilibrium $O$ is an unstable equilibrium, the periodic orbit $C_\varepsilon$ is of saddle type, and both branches of its unstable manifold $W^u(C_\varepsilon)$ are reinjected near a neighbourhood of its local stable manifold $W^s(C_\varepsilon)$.\\
\end{itemize}

The specific geometry of the reinjection refereed in \textbf{(P3)} is detailed in Hypothesis~\textbf{(P4)}  of Section~\ref{ss:global}, where the role of the  parameters $b_+, b_-$ is described. The unfolding dynamics associated to this class of vector fields is what we call \emph{Shilnikov-Hopf bifurcations}.
 \\

\begin{figure}
\begin{center}
 \includegraphics[height=18.5cm]{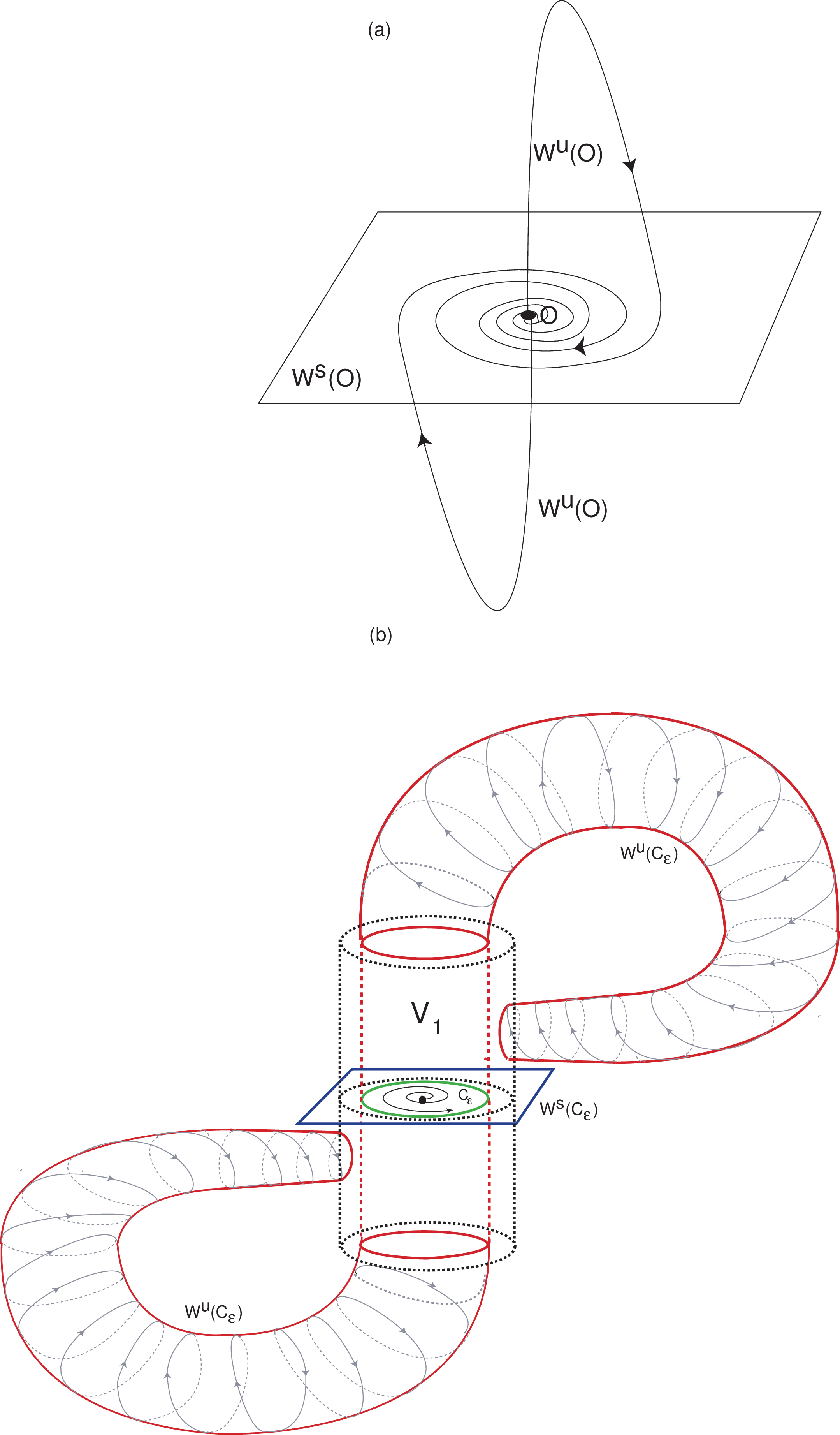}
\end{center}
\caption{\small (a) Illustration of Property  \textbf{(P1)} where $\varepsilon<0$ and $b_+=b_-=0$. (b) Illustration of Properties  \textbf{(P2)--(P3)--(P4)} for $\varepsilon>0$, $b_+>0$ and $b_-<0$.   }
 \label{Imagem 1}
\end{figure}

\section{Local and global maps}
\label{s: local map}
From now on, the symbol $\mathcal{O}$ denotes the standard big-O Landau notation. 
For $\varepsilon \geq 0$, we describe the local dynamics in cylindrical coordinates $(r, \varphi, z)$. Near the equilibrium $O$,   the following system of differential equations  captures the essential features of a supercritical Hopf bifurcation associated to \textbf{(P1)--(P2)} (cf. \cite[Eq. (1)]{BS93}):
\begin{equation}
\label{local_flow}
\begin{cases}
\dot{r} = \varepsilon r - \alpha r^3 + \mathcal{O}(r^5) \\ 
\dot{\varphi} = \omega + \mathcal{O}(r^2) \\
\dot{z} = \lambda z + \mathcal{O}(z^2).
\end{cases}
\end{equation}
System \eqref{local_flow} constitutes a $C^k$--approximation ($k \geq 3$) of $ f_{(\varepsilon,b_+, b_-)}$ satisfying properties \textbf{(P1)--(P3)} within a cylindrical neighbourhood $V_\rho$ of the origin $O$, of radius $\rho$ and height $2\rho$ (cf.  \cite[Ch. 3]{kuznetsov1998elements}):
\begin{equation}
\label{cylinder1}
V_\rho = \{ (r, \varphi, z) : 0 \leq r \leq \rho, \, |z| \leq \rho \},
\end{equation}
where $\rho > 0$ is chosen to be sufficiently small. The parameter $\alpha > 0$ in \eqref{local_flow} guarantees the supercritical nature of the Hopf bifurcation described in  \textbf{(P2)}. \\

\begin{figure}
\begin{center}
 \includegraphics[height=9.0cm]{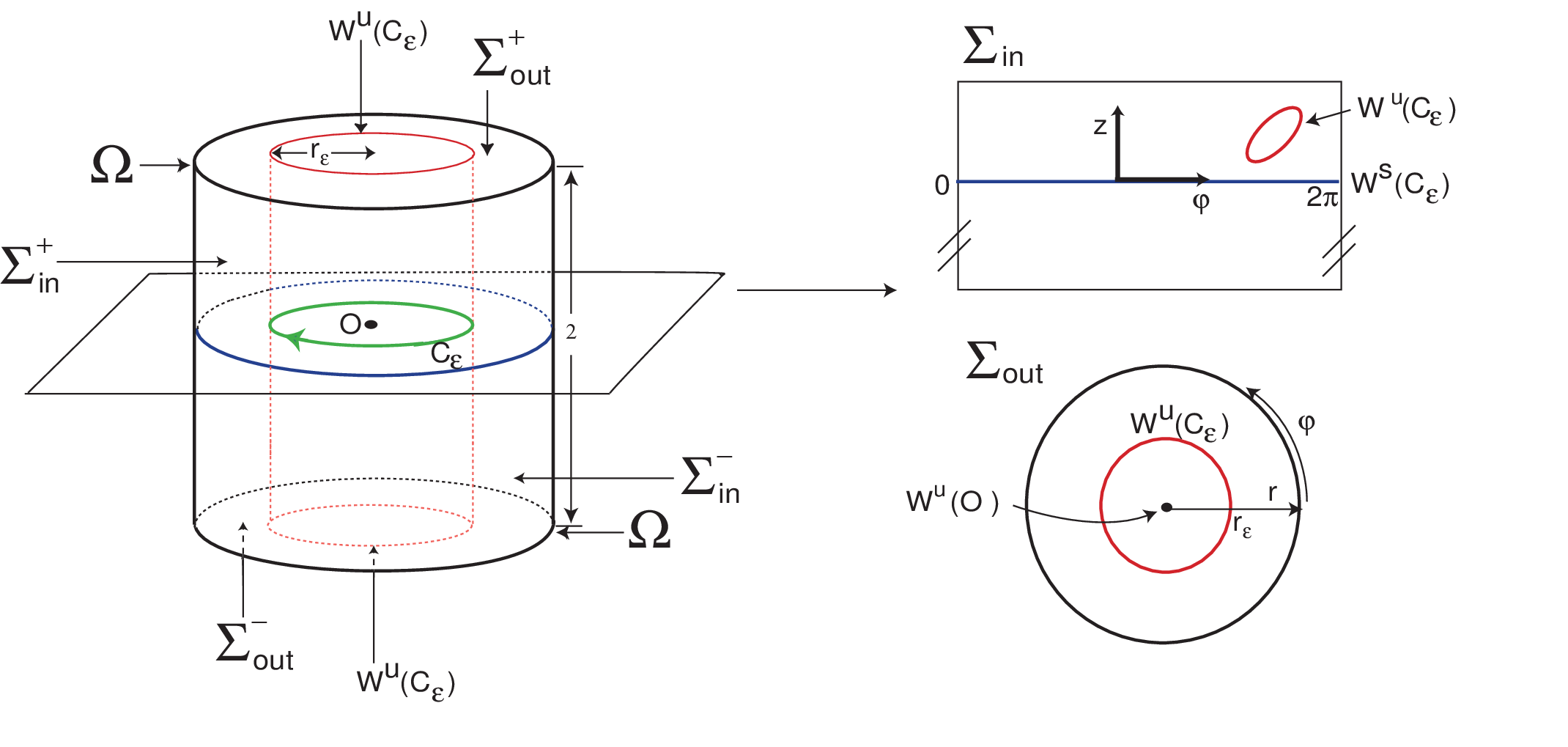}
\end{center}
\caption{\small  Local dynamics in the neighbourhood $V_1$ (see \eqref{cylinder1}). A trajectory entering through $\Sigma_{in}$ spirals near  $C_\varepsilon$,   exiting through one of the disks of   $\Sigma_{out}$. The circles $\Omega^\pm$ denote the intersection of the cylinder walls and the top/bottom disks. Double bars mean that the sides are identified. }
 \label{Imagem 2}
\end{figure}

By rescaling the variable $t$ and the coordinates $r$ and $z$, we may assume, without loss of generality, that $\omega = \rho = 1$. For $\varepsilon > 0$, the origin $O$ acts as a repeller within the $z=0$ plane, and the periodic solution $C_{\varepsilon} \subset V_1$ is given in local cylindrical coordinates by:
\begin{equation}
\label{epsilon<alpha}
z = 0, \qquad r_\varepsilon := \sqrt{\frac{\varepsilon}{\alpha}} < 1. 
\end{equation}
In the following local analysis, we disregard higher-order terms to focus on the leading-order dynamics. Note that at the bifurcation point $\varepsilon=0$, the equilibrium $O$ is non-hyperbolic.\\

\subsection{Cross sections}
\label{ss:cross-sections}

To study the transition map near  the periodic solution $C_\varepsilon$, we define the boundary of the neighbourhood \eqref{cylinder1} as $\partial V_1 = \Sigma_{in} \cup \Sigma_{out} \cup \Omega$, partitioned as follows (see Figure \ref{Imagem 2}):\\
\begin{itemize}
    \item $\Sigma_{in}$ is the   cylinder wall defined by $r=1$. This section is divided by the local stable manifold $W^s_{\text{loc}}(O)$ (when $\varepsilon \leq 0$) or $W^s_{\text{loc}}(C_\varepsilon)$ (when $\varepsilon > 0$). Trajectories originating at $\Sigma_{in}$ enter the interior of $V_1$ for $t > 0$.\\
    \item $\Sigma_{out}$ consists of the union of two disks (top and bottom) defined by $z = \pm 1$ and $r < 1$. For $\varepsilon > 0$, each of these disks is separated by the local unstable manifold $W^u_{\text{loc}}(C_\varepsilon)$. Trajectories starting at $\Sigma_{out}$ enter $V_1$ for $t < 0$, meaning they are the exit points for the forward flow.\\
    \item The vector field is strictly transverse to $\partial V_1$, except at the boundary circles $\Omega = \overline{\Sigma}_{in} \cap \overline{\Sigma}_{out}$, where the flow may exhibit tangencies to the cylinder boundaries.\\
\end{itemize}

For $\varepsilon > 0$, the entrance section $\Sigma_{in}^+$ is parameterised by $(\varphi, z) \in \mathbb{S}^1 \times (0, 1)$ and $\Sigma_{in}^-$  by $(\varphi, z) \in \mathbb{S}^1 \times (-1, 0)$. The intersection $\Sigma_{in} \cap W^s_{\text{loc}}(C_\varepsilon)$ corresponds to the circle at $z=0$. \\

As depicted in Figure \ref{Imagem 2}, the exit sections $\Sigma_{out}^\pm$ are parameterised by $(\varphi, r) \in \mathbb{S}^1 \times [0, 1)$ with $z = \pm 1$. Crucially, the set $\Sigma_{out} \cap W^u_{\text{loc}}(C_\varepsilon)$ consists of two circles of radius $r = r_\varepsilon$ located on the disks $z=1$ and $z=-1$, respectively. \\

\subsection{The local transition map}
\label{sublocal}

For a trajectory starting at the entrance section, with initial condition $(\varphi, z) \in \Sigma_{in} \setminus \{z=0\}$, the time required to reach the exit disks $\Sigma_{out}^\pm$ is governed by the  growth in the $z$-direction. This \emph{time of flight} is given by:
\begin{equation}
 \label{Time of Flight}
 \tau (\varphi, z) = \frac{1}{\lambda} \ln \left( \frac{1}{|z|} \right).
\end{equation}
During this interval, the radial component evolves according to the separable differential equation $\dot{r} = \varepsilon r - \alpha r^3$. Assuming $r(0) \neq 0$, this  equation admits the explicit solution:
\begin{equation*}
r(t) = \left[ \frac{\alpha}{\varepsilon} + \left( r(0)^{-2} - \frac{\alpha}{\varepsilon} \right) e^{-2\varepsilon t} \right]^{-1/2}.
\end{equation*}

By setting the initial radial condition $r(0)=1$ (consistent with the boundary $\Sigma_{in}$) and substituting  $t= \tau(\varphi, z)$, we derive the local map $\text{Loc}: \Sigma_{in} \setminus \{z=0\} \to \Sigma_{out}$, which characterises the local transition:
\begin{equation}
\label{loc1}
\text{Loc}(\varphi, z) = \left( \left[ \frac{\alpha}{\varepsilon} + \left( 1 - \frac{\alpha}{\varepsilon} \right) |z|^{\frac{2\varepsilon}{\lambda}} \right]^{-1/2}, \varphi - \frac{1}{\lambda} \ln |z| \right) =: (r_1, \varphi_1).
\end{equation}
\bigbreak 

\begin{remark}[Continuity at the bifurcation point $\varepsilon=0$]
The radial component $r_1$ is continuous with respect to the parameter $\varepsilon$ at $0$. This can be verified by expressing the term $|z|^{\frac{2\varepsilon}{\lambda}}$ as an exponential:
\begin{eqnarray*}
r_1(\varphi, z) &=& \left[ \frac{\alpha}{\varepsilon} + \left( 1 - \frac{\alpha}{\varepsilon} \right) \exp \left( \frac{2\varepsilon}{\lambda} \ln |z| \right) \right]^{-1/2} \\
&\overset{\text{Taylor}}{\approx}& \left[ 1 - \frac{2\alpha}{\lambda} \ln |z| + \varepsilon \left( \frac{2}{\lambda} \ln |z| - \frac{2\alpha}{\lambda^2} (\ln |z|)^2 \right) \right]^{-1/2} + \mathcal{O}(\varepsilon^2).
\end{eqnarray*}
In the limit $\varepsilon \to 0^+$, we recover the non-hyperbolic radial map $r_1 = (1 - \frac{2\alpha}{\lambda} \ln |z|)^{-1/2}$, which is in full agreement with the local results for the Shilnikov-Hopf scenario described in \cite[pp. 221]{BS93}.
\end{remark}
\bigbreak
\subsection{Local dynamics and domain geometry}

For $\varepsilon<\alpha$ and $\varrho >0$ small, let $\mathcal{A}^\pm$ be a semi-closed annulus in $\Sigma_{out}^\pm$ defined by:\\
\begin{equation}
\label{A_delta}
\mathcal{A}^\pm = \left\{(r_1, \varphi_1) \in \Sigma_{out}^\pm : r_1 \in (r_\varepsilon, r_\varepsilon + \varrho] \quad \text{and} \quad \varphi_1 \in \mathbb{S}^1 \right\}.
\end{equation}
\bigbreak
The geometry of the first return map's domain is established by the following technical results. The proof of  the first is immediate by analysing the expressions \eqref{loc1} and \eqref{A_delta}.\\

\begin{lemma}
\label{F_pm}
The topological closure of the set $\mathcal{F}^\pm = \text{Loc}^{-1}(\mathcal{A}^\pm)$ corresponds to a horizontal strip in $\Sigma_{in}^\pm \cup W^s_{\text{loc}}(C_\varepsilon)$, parameterised as:
\begin{equation*}
\left\{(\varphi, z) \in \Sigma_{in} : \varphi \in \mathbb{S}^1 \quad \text{and} \quad z \in [0, \pm M] \right\},
\end{equation*}
where the maximum height $M$ is given by:
\begin{equation*}
M = \left[ \frac{(r_\varepsilon + \varrho)^{-2} - \frac{\alpha}{\varepsilon}}{1 - \frac{\alpha}{\varepsilon}} \right]^{\frac{\lambda}{2\varepsilon}} \in (0,1).
\end{equation*}
\end{lemma}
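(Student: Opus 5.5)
The plan is to invert the radial component of $\text{Loc}$ in \eqref{loc1} explicitly. The angular component plays no role in determining $\mathcal{F}^\pm$, since $\varphi_1 = \varphi - \frac{1}{\lambda}\ln|z|$ covers all of $\mathbb{S}^1$ for each fixed $z$. The key observation is that $r_1$ depends only on $|z|$, through $s := |z|^{2\varepsilon/\lambda}\in(0,1)$. So I would first write
\[
r_1^{-2} = \frac{\alpha}{\varepsilon} + \Bigl(1-\frac{\alpha}{\varepsilon}\Bigr) s .
\]
Since $\varepsilon<\alpha$, the coefficient $1-\frac{\alpha}{\varepsilon}$ is negative. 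Hence $r_1^{-2}$ is strictly decreasing in $s$, and $r_1$ is strictly increasing in $s$, hence in $|z|$. At the endpoints, $s\to 0$ (that is, $z\to 0$) gives $r_1\to r_\varepsilon=\sqrt{\varepsilon/\alpha}$, and $s=1$ (that is, $|z|=1$) gives $r_1=1$.

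Next I would translate the condition $r_1\in(r_\varepsilon, r_\varepsilon+\varrho]$ defining $\mathcal{A}^\pm$ in \eqref{A_delta} into a condition on $s$. Using monotonicity, $r_1\le r_\varepsilon+\varrho$ is equivalent to $r_1^{-2}\ge (r_\varepsilon+\varrho)^{-2}$. Dividing by the negative number $1-\frac{\alpha}{\varepsilon}$ reverses the inequality and gives
\[
s\le \frac{(r_\varepsilon+\varrho)^{-2}-\frac{\alpha}{\varepsilon}}{1-\frac{\alpha}{\varepsilon}}.
\]
The lower condition $r_1>r_\varepsilon$ holds automatically for all $s>0$. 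Raising both sides to the power $\lambda/(2\varepsilon)>0$, which preserves order, yields $0<|z|\le M$. Thus $\mathcal{F}^\pm=\{\varphi\in\mathbb{S}^1,\ 0<\pm z\le M\}$, with the sign fixed by which disk of $\Sigma_{out}^\pm$ the orbit exits through. Taking the closure adds the circle $z=0$, which is exactly $W^s_{\text{loc}}(C_\varepsilon)\cap\Sigma_{in}$. This gives the claimed horizontal strip.

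It remains to check that $M\in(0,1)$, which I expect to be the only slightly delicate point, since it requires $\varrho$ small. I would use that $r_\varepsilon<1$ by \eqref{epsilon<alpha}, and choose $\varrho$ with $r_\varepsilon+\varrho<1$. Then $\frac{\alpha}{\varepsilon}=r_\varepsilon^{-2}>(r_\varepsilon+\varrho)^{-2}>1$. So both the numerator and the denominator of the bracket are negative, and the bracket is positive. Moreover, the bracket is less than $1$, because $(r_\varepsilon+\varrho)^{-2}-\frac{\alpha}{\varepsilon} > 1-\frac{\alpha}{\varepsilon}$ and both sides are negative. Raising a number in $(0,1)$ to a positive power keeps it in $(0,1)$, which finishes the argument.
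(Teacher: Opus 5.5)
Your proposal is correct and follows the same route as the paper, which only remarks that the lemma is immediate from the explicit expressions for $\text{Loc}$ and $\mathcal{A}^\pm$. You carry out that inversion of the radial component of $\text{Loc}$ in full. You also verify $M\in(0,1)$, correctly noting that this requires $r_\varepsilon+\varrho<1$, a condition the paper leaves implicit.
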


\bigbreak
We now introduce the notion of \emph{segment} and \emph{spiral accumulation on a curve}. These concepts will be useful in the sequel. \\

\begin{definition}
A \emph{vertical segment} in $\Sigma_{in}^\pm$ is a smooth regular parametrised curve $\beta: [0,1] \to \Sigma_{in}^\pm$ that meets $W^s_{\loc}(C_\varepsilon)$ transversely at the point $\beta(0)$ only, such that $\beta(s) = (\varphi_0, z(s))$, where $\varphi_0 \in \mathbb{S}^1$ and $z(s)$ is a monotonic function of $s$ with $z(0)=0$.\\
\end{definition}

\begin{definition}[\cite{ALR}, adapted]
\label{spiralling1_def}
Let $a,b \in \mathbb{R}^+$ such that $a < b$ and let $H$ be an annulus parametrised by the coordinates $(h, \theta) \in [a,b] \times \mathbb{S}^1$. As depicted in Figure \ref{spiral1}, a \emph{spiral on $H$ accumulating on the circle} $\mathcal{C} \subset H$ (where $\mathcal{C}$ is the circle $h = h_0$ for some $h_0 \in [a,b]$) is a smooth curve 
$\alpha : [c, +\infty[ \to H$ such that, in coordinates $\alpha(s) = (h(s), \theta(s))$:\\
\begin{enumerate}
    \item $\dpt \lim_{s \to +\infty} h(s) = h_0$;\\
    \item $\dpt \lim_{s \to +\infty} |\theta(s)| = +\infty$ (monotonic rotation);\\
    \item The curve is simple (no self-intersections).\\
\end{enumerate}
\end{definition}

\begin{figure}[ht]
\begin{center}
\includegraphics[height=4.1cm]{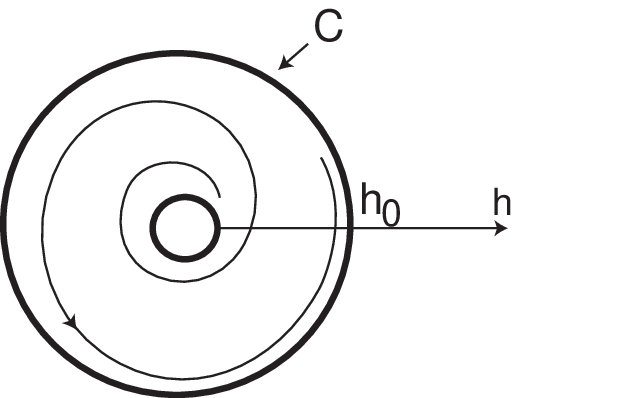}
\end{center}
\caption{\small Spiral on the annulus accumulating on the circle $\mathcal{C}$. The arrow indicates the direction of evolution of the variable $s$ from Definition \ref{spiralling1_def}.}
\label{spiral1}
\end{figure}

\begin{proposition} 
The following assertions hold for $\varphi_0,\varphi_0^\star \in \mathbb{S}^1$ and $z_0 \in (-1,1)\setminus\{0\}$:\\
\begin{enumerate}
    \item The radial map $r_1$ in \eqref{loc1} is strictly increasing with respect to $|z|$.\\
    \item The function $\text{Loc}$ maps horizontal arcs in $\Sigma_{in}$ defined by 
    $$\{(\varphi, z)\in \mathbb{S}^1 \times [-1,1]\setminus \{0\} : \varphi \in [\varphi_0 ,\varphi_0+ \varphi_0^\star ], \, z = z_0\}$$ 
    into arcs of a circle in $\Sigma_{out}$ with angular length $ \varphi_0^\star$.\\
    \item The function $\text{Loc}$ maps vertical segments $\beta(s)= ( \varphi_0, z(s))$ into spirals in $\Sigma_{out}$ accumulating on the circle $r = r_\varepsilon$.\\
\end{enumerate}
\end{proposition}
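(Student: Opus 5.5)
The plan is to argue directly from the explicit formula \eqref{loc1}, treating the three assertions in order. For (1), I will set $u=|z|^{2\varepsilon/\lambda}$. This is a strictly increasing function of $|z|\in(0,1)$ with values in $(0,1)$, since $\varepsilon,\lambda>0$. Then
\[
r_1=\Bigl[\tfrac{\alpha}{\varepsilon}+\bigl(1-\tfrac{\alpha}{\varepsilon}\bigr)u\Bigr]^{-1/2}.
\]
Under the standing assumption $\varepsilon<\alpha$ we have $1-\alpha/\varepsilon<0$, so the bracket is strictly decreasing in $u$. It is also positive, because at $u=1$ it equals $1$ and at $u=0$ it equals $\alpha/\varepsilon$. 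Composing with $t\mapsto t^{-1/2}$, which is decreasing on $(0,\infty)$, shows that $r_1$ is strictly increasing in $|z|$. As a by-product, $r_1\in(r_\varepsilon,1)$, with $r_1\to r_\varepsilon$ as $z\to 0$ and $r_1\to 1$ as $|z|\to 1$. I will record this for use in (3).

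For (2), on a horizontal arc $z=z_0$ is fixed, so $r_1$ is constant. The angular coordinate is $\varphi_1=\varphi-\frac{1}{\lambda}\ln|z_0|$, which is $\varphi$ shifted by a constant. Hence the arc $[\varphi_0,\varphi_0+\varphi_0^\star]$ is mapped isometrically in angle onto an arc of the circle of radius $r_1(z_0)$, of the same angular length $\varphi_0^\star$. This lies in $\Sigma_{out}^+$ or $\Sigma_{out}^-$ according to the sign of $z_0$. If $\varphi_0^\star\ge 2\pi$, the image is the full circle, traversed with multiplicity.

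For (3), I reparametrise the image of $\beta$ by $h(s)=r_1(z(s))$ and $\theta(s)=\varphi_0-\frac1\lambda\ln|z(s)|$. I will check the conditions of Definition~\ref{spiralling1_def} with $\mathcal C=\{r=r_\varepsilon\}$, $H$ the annulus $[r_\varepsilon,1]\times\EU^1$, and with the accumulation direction $s\to 0^+$, where $z(s)\to 0$. A change of parameter such as $\sigma=-\ln s$ turns this into $\sigma\to+\infty$, as the definition requires. Condition (i) follows from the limit computed in (1). Condition (ii) holds because $-\ln|z(s)|\to+\infty$ monotonically, since $z$ is monotone. Smoothness follows from the smoothness of $\beta$, with $z(s)\neq0$ for $s>0$, since $\beta$ meets $W^s_{\loc}(C_\varepsilon)$ only at $s=0$.

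The step needing care is simplicity (iii). I will handle it by observing that $h=r_1(|z|)$ is strictly monotone in $s$, by (1) together with the monotonicity of $z$. Two distinct parameter values therefore have distinct radii, so the curve has no self-intersections, even though $\theta$ wraps around $\EU^1$ infinitely often.

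Two minor points remain. The first is that the vertical segment stays within one half $\Sigma_{in}^\pm$, because $z$ is monotone and vanishes only at $s=0$; the image therefore lies in a single disk of $\Sigma_{out}$. The second is a caveat that I neglect the higher-order terms, exactly as the paper does in deriving \eqref{loc1}.
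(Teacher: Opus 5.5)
Your proposal is correct and follows the same route as the paper: everything is read off the explicit formula \eqref{loc1}, with (2) reducing to a constant angular shift, and (3) checking the conditions of Definition~\ref{spiralling1_def} using the monotonicity from (1). The differences are cosmetic but welcome. You get (1) by composing monotone maps rather than differentiating $u^{-1/2}$, and you make explicit two points the paper leaves implicit: simplicity of the spiral via strict monotonicity of $h$ (which rests on the regularity $z'\neq 0$ of $\beta$), and the reversal of the parameter direction, $s\to0^+$ versus $s\to+\infty$.
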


\begin{proof}
Consider the Local map $\text{Loc}: \Sigma_{in} \to \Sigma_{out}$ given by the coordinates $(r_1, \varphi_1)$ as defined in \eqref{loc1}, where:
\begin{equation}
\label{eq_map_formal_final}
r_1(z) = \left[ \frac{\alpha}{\varepsilon} + \left( 1 - \frac{\alpha}{\varepsilon} \right) |z|^{\frac{2\varepsilon}{\lambda}} \right]^{-1/2}, \quad \varphi_1(\varphi, z) = \varphi - \frac{1}{\lambda} \ln |z|.
\end{equation}

\begin{enumerate}
    \item 
    Let $u(|z|) = \frac{\alpha}{\varepsilon} + (1 - \frac{\alpha}{\varepsilon}) |z|^{\frac{2\varepsilon}{\lambda}}$. Then $r_1(|z|) = [u(|z|)]^{-1/2}$. Differentiating $r_1$ with respect to $|z|\neq 0$ via the chain rule, we may write:\\
    \begin{equation*}
        \frac{d r_1}{d |z|}|z|= -\frac{1}{2} [u(|z|)]^{-3/2} \cdot \left( 1 - \frac{\alpha}{\varepsilon} \right) \frac{2\varepsilon}{\lambda} |z|^{\frac{2\varepsilon}{\lambda}-1} = \frac{\varepsilon}{\lambda} |z|^{\frac{2\varepsilon}{\lambda}-1} [u(|z|)]^{-3/2} \left( \frac{\alpha}{\varepsilon} - 1 \right). \\
    \end{equation*}
    \bigbreak
    In the supercritical case, the parameters satisfy $\alpha > \varepsilon$ and $\lambda, \varepsilon > 0$. Since $(\frac{\alpha}{\varepsilon} - 1) > 0$, it follows that $\frac{d r_1}{d |z|} |z|> 0$ for all $|z| \in (0, 1]$. Thus, $r_1$ is strictly increasing with respect to $|z|$.\\

    \item 
    For a fixed $z = z_0$, $r_1(z_0)$ is constant (denoted $R_0$). The angular component becomes $\varphi_1(\varphi) = \varphi + C$, where $C = -\frac{1}{\lambda} \ln |z_0|$. The image of the interval $\varphi \in [\varphi_0 , \varphi_0  +\varphi_0^\star]$ is the set $\{ (R_0, \varphi_1) : \varphi_1 \in [\varphi_0  + C, \varphi_0  + C + \varphi_0^\star] \}$, which is an arc of a circle of radius $R_0$ with angular length $\varphi_0^\star$.\\

    \item \
    Let $\beta(s) = (\varphi_0, z(s))$ be a vertical segment such that $z(s) \to 0$ as $s \to 0^+$. Let $\alpha(s) = \text{Loc}(\beta(s)) = (h(s), \theta(s))$. 
       We now verify the conditions of Definition \ref{spiralling1_def} for accumulation on $h_0 = r_\varepsilon = \sqrt{\varepsilon/\alpha}$.\\
    \begin{itemize}
        \item Since $z(s)$ is monotonic, then $\theta(s) = \varphi_0 - \frac{1}{\lambda} \ln |z(s)|$ is monotonic. Furthermore, $$\lim_{s \to 0^+} |\theta(s)| = \lim_{z \to 0^+} \left| \varphi_0 - \frac{1}{\lambda} \ln |z| \right| = +\infty.$$\\
        \item The coordinate $h(s) = r_1(z(s))$ satisfies:
        \begin{equation*}
        \lim_{s \to 0^+} h(s) = \left[ \frac{\alpha}{\varepsilon} + \lim_{z \to 0^+} \left( 1 - \frac{\alpha}{\varepsilon} \right) |z|^{\frac{2\varepsilon}{\lambda}} \right]^{-1/2} = \sqrt{\frac{\varepsilon}{\alpha}} = r_\varepsilon.
        \end{equation*}
        Since $r_1$ is strictly increasing and $z(s)$ is monotonic, $h(s)$ converges monotonically to $r_\varepsilon$. This confirms that the image of $\beta$ is a spiral accumulating on the circle $r = r_\varepsilon$.
    \end{itemize}
   
\end{enumerate}
\end{proof}

\subsection{Global map}
\label{ss:global}

For $\varepsilon \leq 0$ and $b_+=b_-=0$, the Flow-box Theorem \cite{PM, PT, Smale} ensures that there exists $\varrho>0$ for which the annulus $\mathcal{A}^\pm$ around $W^u_{\text{loc}}(C_\varepsilon) \cap \Sigma_{out}^\pm$ is mapped into a neighbourhood of $W^s_{\text{loc}}(C_\varepsilon) \cap \Sigma_{in}$. This is a consequence of the existence of the double homoclinic connection to $O$ established in \textbf{(P1)}. This implicitly defines a \emph{global map} $\Psi_\pm : \Sigma_{out}|_{\mathcal{A}^\pm} \to \Sigma_{in}$. \\

In $\Sigma_{out}^\pm$, we define Cartesian coordinates as follows:
\begin{equation*}
(x_1, y_1) = (r_1 \cos \varphi_1, r_1 \sin \varphi_1),
\end{equation*}
where $(r_1, \varphi_1)$ are the local polar coordinates of $\Sigma_{out}^\pm$ defined in Subsection \ref{ss:cross-sections}. Let $(\varphi_2, z_2)$ denote the coordinates of the image $\Psi_\pm(x_1, y_1)$. To formalize our results, we establish the following technical assumption which locate \eqref{eq:ivp2} into the Configuration (a) of \cite[pp. 226]{BS93}:\\

\begin{itemize}
    \item[\textbf{(P4)}] There exist $b_+ \in (m_+ r_\varepsilon,  1-m_+ r_\varepsilon )$, $b_- \in (-1+m_- r_\varepsilon, -m_- r_\varepsilon)$ and $\Delta \varphi \in \EU^1$ such that the map $\Psi_+$ can be written in a neighbourhood of $(0,0)_+ \in \Sigma_{out}^+$ as:
    \begin{equation*}
    \begin{pmatrix} \varphi_2 \\ z_2 \end{pmatrix} = \Psi_+ \left(\begin{pmatrix} x_1 \\ y_1 \end{pmatrix}_+ \right)\mapsto \begin{pmatrix} 0 \\ b_+ \end{pmatrix} + r_1 A_+ \begin{pmatrix} \cos \varphi_1 \\ \sin \varphi_1 \end{pmatrix}.
    \end{equation*}
    The map $\Psi_-$ is written in a neighborhood of $(0,0)_- \in \Sigma_{out}^-$ as:
    \begin{equation*}
    \begin{pmatrix} \varphi_2 \\ z_2 \end{pmatrix} = \Psi_- \left( \begin{pmatrix} x_1 \\ y_1 \end{pmatrix}_-\right) \mapsto \begin{pmatrix} \Delta \varphi \\ b_- \end{pmatrix} + r_1 A_- \begin{pmatrix} \cos \varphi_1 \\ \sin \varphi_1 \end{pmatrix},
    \end{equation*}
    where $A_\pm = \begin{pmatrix} a_{11, \pm} & a_{12, \pm} \\ a_{21, \pm} & a_{22, \pm} \end{pmatrix}$ is an invertible matrix satisfying $|a_{ij, \pm}| < 1$ for all $i, j \in \{1,2\}$. \\ \bigbreak
    \end{itemize}
See Figure \ref{Imagem 6}. From now on, let use the following terminology $m_\pm = \sqrt{a_{21, \pm}^2 + a_{22, \pm}^2} $.

\bigbreak

\begin{figure}
\begin{center}
 \includegraphics[height=14.5cm]{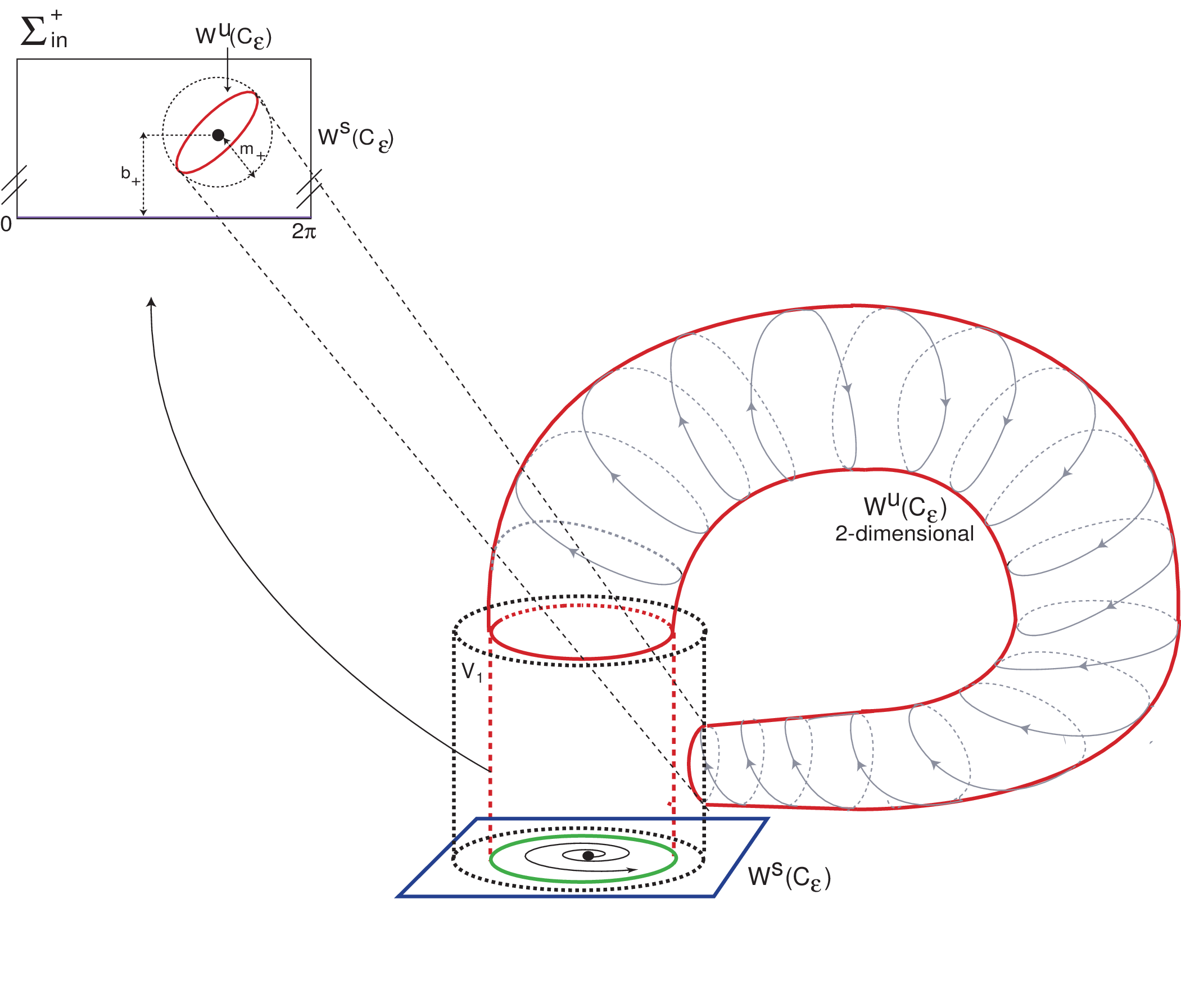}
\end{center}
\caption{\small  Meaning of $b\equiv b_+>0$ and $m\equiv m_+ >0$. This corresponds to the Configuration (a) of \cite[pp. 226]{BS93}. }
 \label{Imagem 6}
\end{figure}

\begin{remark}[Digestive remarks]
\label{dig_remarks1}
We highlight several key remarks essential for understanding the dynamical configuration under analysis: \\
\begin{enumerate}
\item  Property \textbf{(P4)} describes the ``long trip" (global map) that the trajectories take outside  $V_1$.  It   tells us where the unstable manifold ``lands" (at heights $b_\pm$) and how the nearby points are  scaled by the matrices $A_\pm$ before starting a new cycle -- see Figure \ref{Imagem 6}.    \\

\item For $\varepsilon < 0$, $b_\pm$ ``measures'' the distance between $\Psi_\pm(W^u_{\text{loc}}(O))$ and $W^s_{\text{loc}}(O)$ in $\Sigma_{in}$ as in the  Shilnikov-like transition  \cite{hom, Shilnikov65, Shilnikov67A, Shilnikov70}. For $\varepsilon<0$, one knows that $|\varepsilon|<\lambda$ , the condition needed to have suspended horseshoes close to the cycle for $b_\pm$ small -- cf. \cite{Tresser}. \\

    \item  It is not restrictive to assume that the $\varphi_2$ component of $\Psi_+((0,0)_+)$ is zero; this is achieved by a simple rotation of the coordinate system.  It follows from \textbf{(P4)} that $\Psi_\pm$ is smooth and satisfies $$\Psi_+((0,0)_+) = (0, b_+) \quad \text{and} \quad \Psi_-((0,0)_-) = (\Delta \varphi, b_-). \\$$ 
  \bigbreak  
    \item         Property \textbf{(P4)} implies that $0\neq m_\pm \ll1$, meaning that there is a contraction along the homoclinic connection. This property is not unnatural since it may happens in the context of Celestial Mechanics  (see \cite[pp. 222]{BS93} and references therein).\\

    \item  If $|b_\pm| + m_\pm r_\varepsilon < 1$, there exists $\varrho>0$ such that the image of $\mathcal{A}^\pm \subset \Sigma_{out}^\pm$ under the global map $\Psi_\pm$ is a small elliptical annulus in $\Sigma_{in}$ (remind the the meaning of  $\mathcal{A}^\pm$   in Lemma \ref{F_pm}). Specifically, $\Psi_\pm$ maps the circle $\Sigma_{out} \cap W^u_\loc(C_\varepsilon)$ into a contractible curve in $\Sigma_{in}$ -- cf. \cite[Fig. 6]{HK1993}.  In this case,  $$ \text{Loc} \circ \Psi_\pm (\mathcal{A}^\pm ) \subset \mathcal{A}^+\cup \mathcal{A}^- ,$$ provided the domain is appropriately restricted (cf.  \cite[Prop. 2.1]{BS93}).\\
    
    \item  If $b_+ \in (m_+r_\varepsilon,1-m_+r_\varepsilon)$  and  if the contraction of $\Psi_+$ is sufficiently strong ($0\ll m_+<\lambda$), then the map $\text{Loc} \circ \Psi_+|_{\mathcal{A}^+}$ is well-defined on $\mathcal{A}^+$. In particular,  $\text{Loc} \circ \Psi_+(\mathcal{A}^+) \subset \mathcal{A}^+$. An analogous result holds for $\mathcal{A}^-$. This scenario is the same of \cite[Fig. 1, $\sigma^2>B^2 \mu$]{HK1993}. In this case,   trajectories do not crash directly into the homoclinic connection to $C_\varepsilon$, but stay close enough to be affected by its strong stretching, as suggested by Figure \ref{Imagem 3}. \\
    
    \item  If  $|b_\pm| \leq  m_\pm r_\varepsilon $ $(\Leftrightarrow$ Property \textbf{(P4)} does not hold), homoclinic orbits to $C_\varepsilon$ emerge and then homoclinic tangles (suspended Smale horseshoes) occur in the neighbourhood of the homoclinic cycles (cf.  \cite[pp. 222]{BS93}). In the present paper, we are not interested in this case.\\
\end{enumerate}
\end{remark}

\subsection{Focus of the present paper and notation}
From now on, we are going to concentrate the attention on the Configuration (a) of \cite[pp. 226]{BS93} illustrated in Figure \ref{Imagem 6}:
 $$b_+ \in (0,  1-m_+ r_\varepsilon)\subset \RR^+ \quad \text{and} \quad b_- \in (-1+m_- r_\varepsilon, 0 )\subset \RR^-. $$ In this case, the first return map to $\mathcal{A}_\pm$ may be studied looking separately at two return maps and the main results of Section \ref{s: first return} are valid in both cases.   For the sake of simplicity, we are going to focus our study in the first case ($Loc\circ \Psi_+$) and drop the $\pm$ indices, setting:
 $$
 m=m_+,\, \qquad b= b_+>0, \qquad \Sigma_{out}=\Sigma_{out}^+,  \qquad \mathcal{A}=\mathcal{A}^+\qquad\text{and}\qquad \Psi=\Psi_+.
 $$
 Instead of \eqref{eq:ivp2}, we consider hereafter the codimension two problem:
 \begin{equation}
\label{eq:ivp}
\begin{cases}
\dot{x} = f_{(\varepsilon,b)}(x) \\
x(0) = x_0 \in \mathbb{R}^3
\end{cases}
\end{equation}
where    $f_{(\varepsilon,b)}: \mathbb{R}^3 \to \mathbb{R}^3$ is a $C^3$--vector field,   $\varepsilon, b  \in (-\tau, \tau)$ are independent, $\tau > 0$ is arbitrarily small and its flow satisfies \textbf{(P1)--(P4)}, under appropriate adaptations. Let us denote by $\mathcal{X}_{SH}$ the set of vector fields of \eqref{eq:ivp} whose flow satisfies \textbf{(P1)--(P4)}, endowed with the $C^3$--Whitney topology.

 \begin{figure}
\begin{center}
 \includegraphics[height=12.5cm]{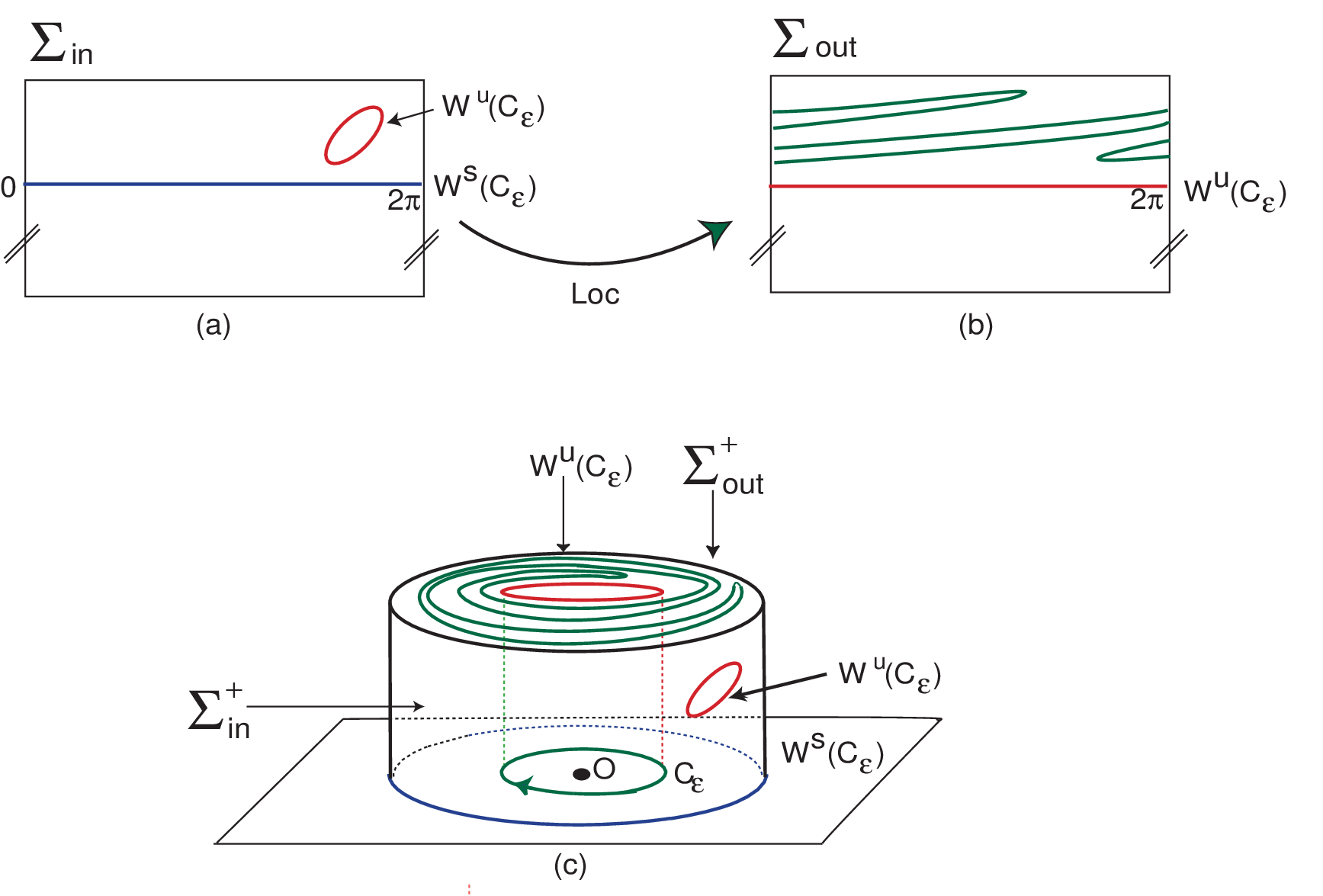}
\end{center}
\caption{\small   (a) Illustration of $\Sigma_{in}$; the circle represents $\Psi_+(W^u_\loc(C_\varepsilon)\cap \Sigma_{out}^+)$. (b)   Illustration of $\mathcal{A}^\pm$ in one connected component of $\Sigma_{out}$;  the spiralling line corresponds to $Loc\circ \Psi_+(W^u_\loc(C_\varepsilon)\cap \Sigma_{out}^+)$. (c) View of the structures of (a) and (b) in $V_1$. }
 \label{Imagem 3}
\end{figure}

 \section{The first return map}
\label{s: first return}
From Hypothesis \textbf{(P1)}, it is known that for $\varepsilon \leq 0$ and $b =0$, there exists a   homoclinic connection to $O$. Consequently, recurrent dynamics exist in a neighbourhood of the homoclinic network, allowing the definition of a first return map on a subset of $\Sigma_{out}$.  \\

\begin{proposition}
\label{thm:0}
Let  $f_{(\varepsilon,b)} \in \mathcal{X}_{SH}$  with $0<\varepsilon <\alpha$ \eqref{epsilon<alpha}. Then there exists $\delta > 0$ such that the first return map $\mathcal{R}_b$ associated $f_{(\varepsilon,b)}$,  acting on the cross-section $\mathcal{A}\subset \Sigma_{out}$ (see \eqref{A_delta}), may be $C^3$--approximated  by: \\
\begin{equation}
\label{def_Rb}
\mathcal{R}_b(r_1, \varphi_1) :=
\begin{pmatrix} 
G_b(r_1, \varphi_1) \\ 
F_b(r_1, \varphi_1)  
\end{pmatrix} =
\begin{pmatrix} 
\left[ \frac{\alpha}{\varepsilon} + \left( 1 - \frac{\alpha}{\varepsilon} \right) (b + mr_1 \cos \varphi_1)^{\frac{2\varepsilon}{\lambda}} \right]^{-1/2} \\ 
 r_1 (a_{11}\cos \varphi_1 + a_{12}\sin \varphi_1) - \frac{1}{\lambda} \ln (b + m r_1  \cos \varphi_1) \pmod{2\pi}
\end{pmatrix}
\end{equation}
whose domain is given by:
\begin{equation*}
\mathcal{D} = \{ (r_1, \varphi_1) \in [r_\varepsilon, r_\varepsilon + \delta] \times \EU^1 : b + m r_1 \cos \varphi_1 > 0 \}\subset \mathcal{A}.
\end{equation*}
\end{proposition}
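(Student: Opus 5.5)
The plan is to obtain $\mathcal{R}_b$ as the composition $\text{Loc}\circ\Psi$, restricted to the part of $\mathcal{A}$ whose image under $\Psi$ lands in $\Sigma_{in}^+$. This sidesteps any analysis of the full flow. I would justify replacing the true first return map by this composition using two facts: the local map \eqref{loc1} is the leading-order part of the flow of \eqref{local_flow}, whose $C^k$-approximation property on $V_1$ is assumed, and the global map is given by \textbf{(P4)} up to smooth higher-order corrections from the flow-box theorem. The composition of the truncated expressions then approximates the true return map in $C^3$ on compact pieces of the domain. The neglected terms are $\mathcal{O}(r^5)$, $\mathcal{O}(r^2)$ and $\mathcal{O}(z^2)$ in the local flow, together with the higher-order terms of $\Psi$. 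Their contributions are smooth and small, so they can be absorbed by shrinking $\delta$.

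First I would write $\Psi$ in polar form using \textbf{(P4)}:
\[
\varphi_2 = r_1(a_{11}\cos\varphi_1 + a_{12}\sin\varphi_1), \qquad z_2 = b + r_1(a_{21}\cos\varphi_1 + a_{22}\sin\varphi_1).
\]
Writing $(a_{21},a_{22}) = m(\cos\theta_0,\sin\theta_0)$ with $m=m_+$ gives $z_2 = b + m r_1\cos(\varphi_1-\theta_0)$. A rotation of the angular coordinate in $\Sigma_{out}$, of the same kind as in Remark~\ref{dig_remarks1}(3), makes $\theta_0=0$. This yields $z_2 = b + m r_1\cos\varphi_1$; I would redefine $a_{11},a_{12}$ accordingly and keep the same names.

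Next I substitute $(\varphi,z)=(\varphi_2,z_2)$ into \eqref{loc1}, which gives the formulas for $G_b$ and $F_b$ directly. The condition $z_2>0$ is exactly $b+mr_1\cos\varphi_1>0$, so $\Psi(x)\in\Sigma_{in}^+$ precisely on $\mathcal{D}$. This explains the domain and why $|z_2|=z_2$ inside the logarithm and the power. The angle is taken mod $2\pi$ since $\varphi_1\in\EU^1$.

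It remains to check that $\mathcal{R}_b$ maps $\mathcal{D}$ back into $\mathcal{A}$, so that it really is a return map. Here I would use $b\in(m r_\varepsilon,1-m r_\varepsilon)$. Choosing $\delta$ small, for all $r_1\in[r_\varepsilon,r_\varepsilon+\delta]$ we get
\[
0 < b - m(r_\varepsilon+\delta) \le z_2 \le b + m(r_\varepsilon+\delta) < 1,
\]
so $z_2$ stays in a compact subinterval of $(0,1)$. In that case $\mathcal{D}$ is all of $[r_\varepsilon,r_\varepsilon+\delta]\times\EU^1$, and the logarithm is smooth with bounded derivatives. By the earlier proposition, $r_1$ is increasing in $|z|$ and tends to $r_\varepsilon$ as $z\to0$, so $G_b\in(r_\varepsilon, r_1(b+m(r_\varepsilon+\delta))]$.

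The main obstacle is ensuring that this upper value is at most $r_\varepsilon+\varrho$, which places the image inside $\mathcal{A}$. Equivalently, by Lemma~\ref{F_pm}, we need $b+m(r_\varepsilon+\delta)\le M$. I would attempt this using Remark~\ref{dig_remarks1}(6): $\varrho$ is the fixed width of the annulus from the flow-box construction and may be taken as large as the neighbourhood allows, while $\delta\le\varrho$ is chosen afterwards. I would also use the smallness of $\varepsilon$ and the contraction $m\ll1$ to keep $z_2$ within the strip of height $M$.

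A second delicate point is uniformity of the $C^3$ approximation as $\varepsilon\to0^+$. The exponent $2\varepsilon/\lambda$ degenerates there, but since $z_2$ is bounded away from $0$, the Taylor expansion from the continuity remark controls all derivatives up to order three.
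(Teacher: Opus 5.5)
Your construction is the paper's proof. You compose $\text{Loc}$ with the \textbf{(P4)} form of $\Psi$, write $a_{21}\cos\varphi_1+a_{22}\sin\varphi_1=m\cos(\varphi_1-\theta_0)$, rotate the angle on $\Sigma_{out}$ to remove $\theta_0$, substitute $z_2=b+mr_1\cos\varphi_1$ into \eqref{loc1}, and read off $\mathcal{D}$ from $z_2>0$. The paper does nothing more: it takes well-definedness from Remark~\ref{dig_remarks1} and simply asserts the $C^3$ approximation.

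You share one slip with the paper. $\varphi_1$ is both the input and the output coordinate of the return map, and the cylinder's only rotational freedom was already used in Remark~\ref{dig_remarks1}(3) to put $\Psi((0,0)_+)$ at angle $0$. So the rotation that removes $\theta_0$ leaves an additive constant $-\theta_0$ in $F_b$, which the stated formula drops. This is harmless for the derivative estimates used later.

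Where you try to go beyond the paper, the mechanisms you invoke do not work.

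\emph{Higher-order terms.} Shrinking $\delta$ does not make the discarded terms of \eqref{local_flow} small. After $\Psi$, the orbit spends a time $\lambda^{-1}\ln(1/z_2)$ in $V_1$. For instance, the $\mathcal{O}(r^2)$ term in $\dot\varphi$ then generically contributes an angular correction of order $r_\varepsilon^2\lambda^{-1}\ln(1/z_2)$, and this does not vanish as $\delta\to0$. The paper only says that higher-order terms are disregarded (Section~\ref{s: local map} and the concluding remarks). So in both versions the formula is a leading-order model, not a proved $C^3$ approximation.

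\emph{The inclusion $\mathcal{R}_b(\mathcal{D})\subset\mathcal{A}$.} This is the condition $b+m(r_\varepsilon+\delta)\le M$. It is not needed for the formula or the domain: once $0<z_2<1$, the next hit of $\Sigma_{out}$ is given by $\text{Loc}\circ\Psi$. It also cannot be arranged by choosing $\delta$, $\varrho$ or $\varepsilon$:
\begin{itemize}
\item Shrinking $\delta$ and taking $m\ll1$ only concentrate $z_2$ near $b$.
\item $M<1$ is fixed once $\varrho$ is, and $\varrho$ is small by the flow-box construction.
\item Small $\varepsilon$ does not help, since $M\to\exp\bigl(-\frac{\lambda}{2\alpha}(\varrho^{-2}-1)\bigr)<1$ as $\varepsilon\to0$.
\end{itemize}
Hence the inclusion fails for $b$ close to $1-mr_\varepsilon$, so it is really a restriction on $b$. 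It does hold for $b$ close to $mr_\varepsilon$ with $m$ small compared with $M$. It also holds for any fixed $b<1$ once $\lambda$ is small enough, because $M\to1$ as $\lambda\to0$. That is the regime in which $\mathcal{R}_b$ is used afterwards.
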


\bigbreak
\begin{proof}

If $b_+ \in (m_+r_\varepsilon,1-m_+r_\varepsilon)$, then there exists $\mathcal{A} \subset \Sigma_{out}$ such that the first return map $\mathcal{R}_b = \text{Loc} \circ \Psi$ is well-defined on  $\mathcal{A}  \subset \Sigma_{out} $ -- as already noticed   in Remark \eqref{dig_remarks1} --- if necessary take a smaller $\varrho$ value in the definition  \eqref{A_delta}. \\

From the expression of $\Psi$ in \textbf{(P4)}, the $y_1$-coordinate is mapped to $z_2 = b + r_1(a_{21}\cos \varphi_1 + a_{22}\sin \varphi_1)$. Using a trigonometric identity, there exists $\varphi_0\in \EU^1$ such that:
\begin{equation*}
a_{21}\cos \varphi_1 + a_{22}\sin \varphi_1 = m \cos(\varphi_1 + \varphi_0),
\end{equation*}
where $m = \sqrt{a_{21}^2 + a_{22}^2}$ and $\tan \varphi_0 = -a_{22}/a_{21}$ provided $a_{21}\neq 0$. By performing a rigid rotation of the angular coordinate in $\Sigma_{out}$, we may assume $\varphi_0 = 0$ without loss of generality. 
Substituting $z_2$ into the local map   derived in  \eqref{loc1}, we obtain:
\begin{equation*}
r_2= \left[ \frac{\alpha}{\varepsilon} + \left( 1 - \frac{\alpha}{\varepsilon} \right) (b + mr_1 \cos \varphi_1)^{\frac{2\varepsilon}{\lambda}} \right]^{-1/2}
\end{equation*}
and the angular component follows from the translation $\varphi_2 - \frac{1}{\lambda} \ln z_2$. This approximation holds in the $C^3$--topology because of \eqref{local_flow} and the embedding $\Psi$ of \textbf{(P4)}. The need of the hypothesis $0<\varepsilon <\alpha$ comes from \eqref{epsilon<alpha}.
  \\
\end{proof}
With respect to $\mathcal{R}_b$, the map  $G_b$ represents the \emph{radial} return, and $F_b$ the \emph{angular} component. It is easy to see that if \textbf{(P4)} holds, then $\mathcal{D}$ is a circloid. Assume that  
\begin{equation}
\label{def_D}
 \mathcal{D}= I\times \EU^1= [r_{min}, r_{max}]\times \EU^1, \quad \text{where} \quad r_{min}= r_\varepsilon<r_{max}.
 \end{equation}
 
Throughout this paper, the notation $m \ll \lambda \ll 1,$ means that there exist positive constants $\eta_0$ and $\lambda_0$ such that $0 < m \leq \eta_0 \lambda$ and $ 0 < \lambda < \lambda_0$ where $\eta_0>0$ is chosen sufficiently small and $\lambda_0>0$ is fixed. Whenever necessary, $\eta_0$ may be further reduced so that all estimates established below remain valid.

\section{Main results}
\label{s:main}

Following  Proposition~\ref{thm:0}, we now focus our attention on the first return map $\mathcal{R}_b$, defined on the domain $\mathcal{D} \subset \Sigma_{out}$.  The following results provide a precise mathematical formulation of the \emph{Bosch--Sim\'o Conjecture} stated in \cite[p.~228]{BS93}. Let $ {\emph Leb}$ denote the one-dimensional Lebesgue measure in $\RR$ and let $\varrho > 0$ be an arbitrarily small number. 
\begin{maintheorem}
\label{main1} 
For $\varepsilon < \alpha$ and $m \ll \lambda \ll 1$, then:
\begin{equation*}
\liminf_{\varrho\to0^+}
\frac{\rm Leb(\Delta_\varrho)}
{\varrho}
>0.
\end{equation*}
where
$$
\Delta_\varrho = \{ b \in [m r_\varepsilon, m r_\varepsilon +\varrho] : \mathcal{R}_b \text{ exhibits a ``large'' strange attractor } \Lambda_b \}.
$$
\bigbreak
Furthermore, if $b\in \Delta_\varrho $, then the strange attractor $\Lambda_b$ supports a unique ergodic SRB measure $\mu_b$.\\
\end{maintheorem}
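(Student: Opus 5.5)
Recast $\mathcal{R}_b$ as a two-dimensional perturbation of a one-dimensional circle family and apply the Wang--Young theory of rank-one attractors \cite{WY2008}. The radial component $G_b$ is strongly contracting. Its derivatives with respect to $r_1$ and $\varphi_1$ carry the factor $(b+mr_1\cos\varphi_1)^{2\varepsilon/\lambda-1}\cdot m$. For $b$ close to $mr_\varepsilon$ and $m\ll\lambda$, this factor should be small compared with the derivatives of the angular component. First, I will use radial contraction to show that $G_b$ admits an attracting invariant curve $r_1=h_b(\varphi_1)$, close to the circle $r_1=r_\varepsilon$, via a graph-transform or cone-field argument. Restricted to it, the dynamics reduce to the circle map
\[
f_b(\varphi)=F_b(h_b(\varphi),\varphi)\approx \varphi + r_\varepsilon(a_{11}\cos\varphi+a_{12}\sin\varphi)-\tfrac{1}{\lambda}\ln\bigl(b+mr_\varepsilon\cos\varphi\bigr) \pmod{2\pi}.
\]
In the singular limit I will treat $\mathcal{R}_b$ as $(\varphi,r)\mapsto (f_b(\varphi),0)$ plus a perturbation that is $C^3$-small in the rescaled sense required by the Wang--Young hypotheses (H1)--(H5).

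Next I verify the one-dimensional hypotheses for $f_b$. Write $b=mr_\varepsilon+\xi$ with $\xi\in(0,\varrho]$. The logarithmic term then makes $f_b'$ very large away from the critical set: $|f_b'|\gtrsim \frac{m}{\lambda}\frac{|\sin\varphi|}{b+mr_\varepsilon\cos\varphi}$. The critical points are where $\frac{m\sin\varphi}{\lambda(b+mr_\varepsilon\cos\varphi)}$ balances $r_\varepsilon(-a_{11}\sin\varphi+a_{12}\cos\varphi)$; they are essentially near $\varphi=0$ and $\varphi=\pi$. At these points I must check non-degeneracy, $f_b''\neq0$, which the $\ln$-singularity provides since $f_b''\sim m/(\lambda b)$. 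Outside small neighbourhoods of the critical points, the expansion $|f_b'|\ge K\gg1$ comes from $m\ll\lambda\ll1$ and the singular factor $1/(\lambda b)$. This yields the ``large'' structure of Definition \ref{large_def}: the image wraps around $\EU^1$ many times, because the angular variation of $-\frac1\lambda\ln(\cdot)$ over the circle is of order $\frac1\lambda\ln\frac{b+mr_\varepsilon}{b-mr_\varepsilon}\to\infty$ as $\xi\to0$. The attractor, being the closure of unstable curves stretched across the full angle, is therefore non-contractible.

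The central step, and the main obstacle, is to show that for some parameter $b^*$ the map $f_{b^*}$ is of Misiurewicz type (critical orbits staying a uniform distance from the critical set), and that the family is transversal in $b$: the critical values move with speed $\partial_b f_b(c(b))\sim -\frac{1}{\lambda b}$ that dominates the speed of their forward images' preimages. My plan is to exploit that, as $\xi$ varies in $[0,\varrho]$, the quantity $-\frac1\lambda\ln b$ sweeps $\EU^1$ infinitely often. By an intermediate-value argument, I will find parameters where the critical value lands on a hyperbolic repelling periodic orbit, or on a point whose orbit avoids critical neighbourhoods using the uniform expansion. The transversality condition of \cite{WY2008} then holds because the parameter derivative of the critical value is unbounded while the spatial derivatives along the orbit are controlled. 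Getting uniform estimates on the derivative of the critical-value motion versus the phase derivative, uniformly as $b\to mr_\varepsilon$, is the delicate part. I expect to need a rescaling of the parameter to $\theta=-\frac1\lambda\ln\xi$ so that each sweep has comparable geometry.

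Finally, I apply the Wang--Young theorem. It gives a set of $\theta$-parameters of positive measure, with positive density in each sweep window, for which $\mathcal{R}_b$ has a strange attractor with a unique ergodic SRB measure. Converting back to $b$, each sweep interval $[\xi_k,\xi_{k+1}]$ with $\xi_k=e^{-\lambda(\theta_0+2\pi k)}$ contributes a fixed proportion of its length. Summing over $k$ gives $\mathrm{Leb}(\Delta_\varrho)\ge c\varrho$ for small $\varrho$, hence the positive $\liminf$. Uniqueness of $\mu_b$ follows from the irreducibility and mixing conclusions of the rank-one theory.
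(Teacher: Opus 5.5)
Your reduction (radial contraction, the attracting graph $\gamma_b$, the reduced map $f_b$ with critical points near $0$ and $\pi$, expansion and non-degeneracy, then Wang--Young) follows the paper's architecture. You depart from it in how you produce Misiurewicz parameters and the density bound, and the argument fails at that point. The paper runs a Benedicks--Carleson exclusion directly in $b$ on $[mr_\varepsilon,mr_\varepsilon+\varrho]$ and gets the $\liminf$ by citing Wang--Ott. You instead let a critical value sweep $\EU^1$ and sum over windows. Your premise that $-\frac{1}{\lambda}\ln b$ sweeps $\EU^1$ infinitely often is false: on $[mr_\varepsilon,mr_\varepsilon+\varrho]$ it changes by only about $\varrho/(\lambda m r_\varepsilon)$. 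What sweeps is $-\frac1\lambda\ln E_b(\pi)$, with $E_b(\pi)=b-m\gamma_b(\pi)\to0^+$. That term enters only the critical value $v_2$ of the critical point near $\pi$, whose speed is of order $1/(\lambda E_b(\pi))$, not $1/(\lambda b)$.

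At the critical point near $0$ one has $E_b\approx 2mr_\varepsilon$, so $dv_1/db\approx-1/(2\lambda m r_\varepsilon)$. In your variable $\theta=-\frac1\lambda\ln\xi$ this gives $dv_1/d\theta\approx \xi/(2mr_\varepsilon)\to0$. So in the rescaled family one critical value is frozen: (H4) degenerates for it. Your intermediate-value argument can place $v_2$ on a hyperbolic set but says nothing about $v_1$, and condition (a) of Definition~\ref{def-Misiur} needs both critical orbits controlled at the same parameter. Since $v_1$ moves by only $O(\varrho/(\lambda m))$ in total, its first $\sim\log(\lambda m/\varrho)/\log L$ iterates are dictated by the limit map $f_*(\varphi)=r_\varepsilon(a_{11}\cos\varphi+a_{12}\sin\varphi)-\frac1\lambda\ln\bigl(mr_\varepsilon(1+\cos\varphi)\bigr)$. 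As $\varrho\to0^+$ you therefore need a property of the critical orbit of $f_*$ itself. None of your estimates provides it, and in Misiurewicz form it is non-generic, because the points whose $f_*$-orbit never enters $C_{\delta_0}$ form a Lebesgue-null set. (The paper's exclusion bounds both critical-value speeds below by $\tau/(\lambda m)$ and does not draw this distinction either.)

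Even with $v_1$ under control, the step ``Wang--Young gives a fixed proportion of each window'' does not follow. Near $\pi$ one has $|f_b''|\sim mr_\varepsilon/(\lambda E_b(\pi))\to\infty$, so the window families do not converge to a regular $C^3$ one-parameter family. Away from $\pi$ they converge to the $\theta$-independent map $f_*$, which has a logarithmic singularity at $\pi$. The size of the good parameter set, and the transverse smallness Wang--Young require, depend on $C^3$ bounds and on the Misiurewicz and transversality constants of the one-dimensional family. None of these is uniform here, whereas your transverse contraction (of order $m/\lambda$) is fixed. The missing ingredients are:
\begin{enumerate}
\item a rank-one theory for families with a logarithmic singularity, with constants uniform across windows (in the spirit of Wang--Ott and Tang--Wang);
\item control of the frozen critical orbit described above.
\end{enumerate}
Two smaller points. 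First, the angular component of $\mathcal{R}_b$ is $r_1(a_{11}\cos\varphi_1+a_{12}\sin\varphi_1)-\frac1\lambda\ln(b+mr_1\cos\varphi_1)$, with no identity term. So drop the ``$\varphi+$'' in your $f_b$; it is a degree-zero circle map. Second, Wang--Young alone gives at most as many ergodic SRB measures as critical points, so uniqueness of $\mu_b$ needs an additional topological mixing check.
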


 Theorem \ref{main1} establishes that the existence of ``large'' strange attractors is an \emph{abundant phenomenon}, in the sense of \cite{MV93}, as $b \to (m r_\varepsilon)^+$. The iterates of Lebesgue-almost in the ``ghost'' of the homoclinic cycle   wind around an annulus $\mathcal{D}$   in the phase space, which justifies the terminology \emph{``large'' strange attractor} of  Definition \ref{large_def}. The proof, presented in Section \ref{proof_main_1}, relies on reducing the two-dimensional dynamics of $\mathcal{R}_b$ to an effective one-dimensional map using the theory of \emph{rank-one attractors} (see Section \ref{s: theory}). The condition $m \ll \lambda \ll 1$ means that the radial contraction $m$ is sufficiently strong to dominate  $\lambda$ which, in turn, is taken to be sufficiently small. \\
 
One might intuitively expect $\mathcal{R}_b$ to possess a strange attractor for every $b\gtrsim m r_\varepsilon$. However, this is not the case, as highlighted by our next result concerning the coexistence of periodic sinks.\\

\begin{definition}
A stable hyperbolic periodic orbit of the map $\mathcal{R}_b$ is a periodic orbit whose Jacobian matrix  $D\mathcal{R}_b$ has all eigenvalues with moduli strictly less than $1$. Such an orbit is termed \emph{superstable} if at least one of these eigenvalues is zero. \\
\end{definition}

\begin{maintheorem}
\label{main2} 
For $0<\varepsilon < \alpha$ and $m \ll \lambda$, there exists a sequence of parameters $(b_n)_{n \ge 1}$ with $\dpt \lim_{n \to \infty} b_n = (m r_\varepsilon)^+$ such that, for each $n\in \NN$, the map $\mathcal{R}_{b_n}$ possesses a superstable 2-periodic orbit.\\
\end{maintheorem}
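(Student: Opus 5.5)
We plan to work with the reduced one-dimensional dynamics rather than the full map $\mathcal{R}_b$. Since $m \ll \lambda$, the radial component $G_b$ in \eqref{def_Rb} is strongly contracting in $r_1$: $\partial_{r_1} G_b$ is $\mathcal{O}(m)$. Moreover, as $b \to (m r_\varepsilon)^+$ the radius $r_1$ stays pinned near $r_\varepsilon$. We therefore freeze $r_1 = r_\varepsilon$ and consider the angular map
\[
f_b(\varphi) = r_\varepsilon(a_{11}\cos\varphi + a_{12}\sin\varphi) - \frac{1}{\lambda}\ln(b + m r_\varepsilon \cos\varphi) \pmod{2\pi}.
\]
A periodic orbit of $\mathcal{R}_b$ is superstable if one eigenvalue of $D\mathcal{R}_b^2$ vanishes. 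Our strategy is to look for parameters where the $\varphi$-derivative of the angular part along the orbit vanishes at a critical point, and then to check that this forces $\det D\mathcal{R}_b^2$ (or the relevant eigenvalue) to vanish, or to argue directly with the full Jacobian.

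\textbf{Step 1 (critical points).} Compute
\[
\partial_{\varphi_1} F_b = r_1(-a_{11}\sin\varphi_1 + a_{12}\cos\varphi_1) + \frac{m r_1 \sin\varphi_1}{\lambda(b + m r_1\cos\varphi_1)}.
\]
When $b$ is close to $m r_\varepsilon$, the denominator becomes tiny near $\varphi_1 = \pi$, so the second term dominates everywhere except where $\sin\varphi_1$ is small. Solving $\partial_{\varphi_1}F_b = 0$ by the implicit function theorem then yields a critical point $c_b = \pi + \mathcal{O}(\cdot)$ (and one near $0$). The key observation is that $\partial_{\varphi_1}G_b$ also carries a factor $\sin\varphi_1$. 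It follows that $D\mathcal{R}_b$ at a point is $\begin{pmatrix}\mathcal{O}(m)\ \ g_\varphi\\ h_r\ \ F_\varphi\end{pmatrix}$, and we plan to find the zero of its determinant, $\det D\mathcal{R}_b = \mathcal{O}(m)F_\varphi - g_\varphi h_r$, along a curve $\Gamma_b$ close to $\{\varphi_1 = c_b\}$. A periodic orbit through a point of $\Gamma_b$ has $\det D\mathcal{R}_b^2 = 0$, hence a zero eigenvalue. The other eigenvalue is then the trace, and it is small because the orbit passes through the fold, where the expansion is killed, while the remaining factor is bounded.

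\textbf{Step 2 (period-2 condition).} For $z$ in a tiny neighbourhood of the critical curve, the image angle $F_b$ contains $-\frac{1}{\lambda}\ln(b - m r_\varepsilon + \dots)$. This term tends to $+\infty$ as $b \downarrow m r_\varepsilon$, so it winds monotonically around $\mathbb{S}^1$ infinitely many times. We consider the point $p_b \in \Gamma_b$ and require $\mathcal{R}_b^2(p_b) = p_b$. Because $G_b$ contracts strongly, we first use Section~\ref{s: invariant curve}, the invariant curve for $G_b$ (a graph $r = \gamma_b(\varphi)$), to reduce to the one-dimensional equation
\[
\Phi(b) := \tilde f_b^2(c_b) - c_b \equiv 0 \pmod{2\pi},
\]
where $\tilde f_b$ is the restriction to the invariant curve and $c_b$ is its critical point. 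Choosing the critical point of $\tilde f_b$ directly is cleaner, since then the $\mathcal{R}_b$-orbit lies on the curve and the tangent eigenvalue is $\tilde f_b'(c_b)\,\tilde f_b'(\tilde f_b(c_b)) = 0$. The transverse eigenvalue is $\mathcal{O}(m^2) < 1$.

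\textbf{Step 3 (intermediate value argument).} We show that $b \mapsto \tilde f_b(c_b)$ is continuous and unbounded: its lift tends to $+\infty$ like $-\lambda^{-1}\ln(b - m r_\varepsilon)$ as $b \to (m r_\varepsilon)^+$, and it is monotone there, with $\partial_b$ of order $\lambda^{-1}(b - m r_\varepsilon)^{-1}$. Meanwhile $\tilde f_b(\cdot)$ evaluated at a fixed angle depends on $b$ only through a bounded $\mathcal{O}(1)$ variation when $b$ stays away from the singular set. The plan is to pick $b$ so that $\tilde f_b(c_b)$ lands at an angle $\psi$ whose image is $c_b$. This requires showing that $\psi$ lies away from the singular region; the map $\tilde f_b$ is onto there and the solution can be chosen continuously in $b$. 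Then $\Phi$, measured on the lift, changes by $2\pi$ over each interval $[b_{n+1}, b_n]$ where the winding number increases by one, and the intermediate value theorem gives a zero $b_n$ in each such interval. Hence $b_n \to (m r_\varepsilon)^+$.

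The main obstacle will be this last step. We must control the composition uniformly so that the second iterate is not itself near the singularity, and so that the two $b$-dependences (the dominant logarithmic winding and the bounded remainder) do not cancel. We will first try to ensure this by showing that the derivative in $b$ of the first iterate dominates everything else by a factor $(b - m r_\varepsilon)^{-1}$.
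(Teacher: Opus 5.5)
There is a genuine gap, and it already appears in your Step 1: the determinant you plan to make vanish never vanishes. Your Steps 2--3 otherwise follow essentially the paper's own route: restrict to the invariant graph of Lemma~\ref{thm:invariant_graph}, let the critical value near $\pi$ wind, solve $f_b^2(c_b)=c_b$ by the intermediate value theorem, and read off a zero eigenvalue from a triangular Jacobian.

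Write $\mathcal G_b=g(z_2)$ with $g(z)=[\alpha/\varepsilon+(1-\alpha/\varepsilon)z^{2\varepsilon/\lambda}]^{-1/2}$ and $z_2=b+mr_1\cos\varphi_1$. In $\partial_{r_1}\mathcal G_b\,\partial_{\varphi_1}\mathcal F_b-\partial_{\varphi_1}\mathcal G_b\,\partial_{r_1}\mathcal F_b$ the singular $1/(\lambda z_2)$ terms cancel exactly, leaving
\[
\det D\mathcal R_b(r_1,\varphi_1)=m\,a_{12}\,r_1\,g'(z_2)=-\,r_1\,g'(z_2)\,\det A .
\]
Here $g'>0$ for $z>0$ because $0<\varepsilon<\alpha$. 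Also $\det A\neq0$ by \textbf{(P4)}: after the rotation in Proposition~\ref{thm:0} the second row of $A$ is $(m,0)$, so $a_{12}\neq0$. Conceptually, $\mathcal R_b=\mathrm{Loc}\circ\Psi$ is a composition of local diffeomorphisms.

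Consequences:
\begin{itemize}
\item Your curve $\Gamma_b$ is empty. Since $\det D\mathcal R_b^2\neq0$ along every 2-periodic orbit, no periodic orbit of $\mathcal R_b$ has a zero eigenvalue.
\item The same identity defeats Step 2. Differentiating $\mathcal R_b(\gamma_b(\varphi),\varphi)=(\gamma_b(f_b(\varphi)),f_b(\varphi))$ gives $D\mathcal R_b\,(\gamma_b'(\varphi),1)^{T}=f_b'(\varphi)\,(\gamma_b'(f_b(\varphi)),1)^{T}$. A critical point of the map restricted to a $C^1$ invariant graph would therefore put a nonzero vector in $\ker D\mathcal R_b$.
\item Hence $\tilde f_b$ has no critical points at all; since $\mathcal R_b$ is injective, it would even be a circle homeomorphism. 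The ``tangent eigenvalue'' $\tilde f_b'(c_b)\,\tilde f_b'(\tilde f_b(c_b))=0$ does not exist.
\end{itemize}
The paper's last step has exactly the same defect: a triangular $D\mathcal R_{b_n}(q_1)$ with $f_{b_n}'(c_2)=0$ on the diagonal contradicts the identity above. With superstability defined as a zero eigenvalue, the statement cannot hold for $\mathcal R_b$ as given in \eqref{def_Rb}.

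What your argument does establish is the one-dimensional statement, and for it your Step 3 is the right mechanism. It is more careful than the paper's claim that $\widetilde H(b)=f_b(\xi_2(b))$ itself winds. The lift of the degree-zero map $f_b$ is $2\pi$-periodic and bounded on arcs away from $\pi$; what winds is $\xi_2(b)$ relative to a preimage of $c_2(b)$. Take a one-dimensional model with genuine folds, such as your frozen map $f_b=\mathcal F_b(r_\varepsilon,\cdot)$. A preimage $\psi(b)$ of $c_b$, chosen continuously where $|f_b'|\gtrsim\lambda^{-1}$, moves with speed $\mathcal O(1/m)$. The critical value moves with speed of order $(\lambda(b-mr_\varepsilon))^{-1}$. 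Each turn therefore produces a solution, and since $\psi(b)$ stays away from $c_b\approx\pi$ the cycle has minimal period $2$.

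For $\mathcal R_b$ itself, the correct target is a 2-periodic sink for parameters near $b_n$: both eigenvalues in the unit disc, with product $\det D\mathcal R_b^2\neq0$ (extremely small). Obtaining it by perturbation from the one-dimensional superstable cycle requires $C^1$ control of $\mathcal R_b$ against $(r_1,\varphi_1)\mapsto(r_\varepsilon,f_b(\varphi_1))$ along the orbit. This is delicate near $\varphi_1\approx\pi$, where $\partial_{r_1}\mathcal F_b\approx m/(\lambda(b-mr_1))$ blows up. You would need to show that the radial offset $\mathcal G_b-r_\varepsilon=\mathcal O(z_2^{2\varepsilon/\lambda})$ is small enough to compensate. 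You would also need to account for the singular threshold along the orbit being $b=m\,r_1$, where $r_1>r_\varepsilon$ is the radius of the orbit point near $\pi$, so it lies slightly above $mr_\varepsilon$.
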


The existence of these superstable 2-periodic orbits is studied in Section \ref{proof_main_2}. These critical points are located in a neighbourhood of $0$ or $\pi$ for the reduced map, confirming the numerical observations in \cite{BS93}. \\

\subsection*{Insight into the Reasoning}
\label{sec:insight}

The proofs of Theorem \ref{main1} and Theorem \ref{main2} rely on the reduction of  \eqref{def_Rb} to a family of one-dimensional maps $f_b: \mathbb{S}^1 \to \mathbb{S}^1$, enabled by the strong radial contraction of the system. The core mechanism driving the dynamics is the logarithmic term in the angular component, which induces a singular behaviour as the parameter $b\to   (m r_\varepsilon)^+$.
The proof of Theorem \ref{main1} is rooted in the  Wang--Young theory for non-uniformly expanding maps. The reasoning follows a delicate balance between expansion and recurrence.

 The proof of Theorem \ref{main2} shifts focus from measure-theoretic expansion to the topological ``winding" of the map's images.
 As  $b\to   (m r_\varepsilon)^+$, the velocities of the critical images diverge. We show that the relative distance between the images of the two \emph{critical points} winds around the circle infinitely many times. 
 Because the radial direction is always contracting, then $f_b$ automatically creates a superstable orbit.

\section{Existence of an invariant curve for $G_b$}
\label{s: invariant curve}

The purpose of this section is to justify the reduction of the
two-dimensional return map $\mathcal R_b$ to an effective
one-dimensional dynamics.  The key mechanism is the strong contraction in the radial direction.
Throughout this section we work on a compact trapping region
$D=I\times\mathbb S^1,$
where $I=[r_{\min},r_{\max}],$
and assume that
$\mathcal R_b(D)\subset D.
$
Moreover, we restrict attention to compact parameter intervals
$B=[b_0,b_1]\subset (mr_\varepsilon,1)$
for which there exists a constant $\eta>0$ satisfying
$$
b+mr_1\cos\varphi_1\ge \eta,
\qquad
\forall (r_1,\varphi_1)\in D,
\qquad
\forall b\in B.
$$

Hence the logarithmic singularity remains uniformly separated from the
trapping region and the return map is $C^k$-regular on a neighbourhood
of $D$.
 The first result establishes the uniform contraction of the radial
component $\mathcal{G}_b$ defined in \eqref{def_Rb}.
This contraction is responsible for collapsing the two-dimensional
dynamics onto an attracting invariant curve, providing the
one-dimensional reduction used in \cite[Sec.~4]{BS93}.

\begin{lemma}
\label{lem:radial_contraction} 

Let $\mathcal{G}_b:I\times\mathbb S^1\to I$ be the radial component of
\eqref{def_Rb}.
If $m\ll\lambda$, then there exists a constant
$\kappa\in(0,1)$ such that

\[
\left|
\frac{\partial \mathcal{G}_b}{\partial r_1}
(r_1,\varphi_1)
\right|
\le \kappa,
\qquad
\forall (r_1,\varphi_1)\in I\times\mathbb S^1 .
\]
\end{lemma}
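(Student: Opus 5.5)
We compute $\partial \mathcal{G}_b/\partial r_1$ directly from \eqref{def_Rb} by the chain rule, exactly as in part (1) of the Proposition describing $\text{Loc}$. Write $z_2(r_1,\varphi_1)=b+mr_1\cos\varphi_1$ and $u(z_2)=\frac{\alpha}{\varepsilon}+\bigl(1-\frac{\alpha}{\varepsilon}\bigr)z_2^{2\varepsilon/\lambda}$, so that $\mathcal{G}_b=u^{-1/2}$. Differentiating gives
\[
\frac{\partial \mathcal{G}_b}{\partial r_1}
=\frac{\varepsilon}{\lambda}\Bigl(\frac{\alpha}{\varepsilon}-1\Bigr)\,u^{-3/2}\,z_2^{\frac{2\varepsilon}{\lambda}-1}\cdot m\cos\varphi_1
=\frac{(\alpha-\varepsilon)\,m\cos\varphi_1}{\lambda}\,u^{-3/2}\,z_2^{\frac{2\varepsilon}{\lambda}-1}.
\]
The whole argument then amounts to bounding each factor uniformly on $D\times B$ and showing that the prefactor $m/\lambda$ makes the product small.

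We bound the factors in order. First, $|\cos\varphi_1|\le1$ and $0<\alpha-\varepsilon<\alpha$. Second, since $0<z_2\le 1$ on $D$ (which follows from \textbf{(P4)}, $b<1-mr_\varepsilon$ and $r_1$ close to $r_\varepsilon$), we have $z_2^{2\varepsilon/\lambda}\in(0,1]$. Hence $u$ is a convex combination–type expression lying between $1$ and $\alpha/\varepsilon$, so $u\ge 1$ and $u^{-3/2}\le 1$. Third, for the factor $z_2^{2\varepsilon/\lambda-1}$ we use the standing assumption $z_2\ge\eta>0$ on $D$ for $b\in B$. If $2\varepsilon/\lambda\ge1$ the factor is at most $1$. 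Otherwise it is at most $\eta^{-1}$. In either case it is bounded by $\max\{1,\eta^{-1}\}$, and this bound is uniform in $b\in B$. Combining the three bounds,
\[
\Bigl|\frac{\partial \mathcal{G}_b}{\partial r_1}\Bigr|\le \frac{\alpha\,\max\{1,\eta^{-1}\}}{\lambda}\,m\le \alpha\,\max\{1,\eta^{-1}\}\,\eta_0 .
\]
The last inequality uses the convention $m\le\eta_0\lambda$ encoded in $m\ll\lambda$. Choosing $\eta_0$ small enough, for instance so that the right-hand side is at most $1/2$, gives the lemma with $\kappa=1/2$ (any $\kappa\in(0,1)$ works).

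The main obstacle is the dependence of $\eta$ on $m$. A sharper bound on $u^{-3/2}z_2^{2\varepsilon/\lambda}$ does not help here, because the dangerous term is the negative power $z_2^{-1}$ near the singularity. The lower bound $\eta$ is essentially $b-mr_{\max}$, which can itself be of order $m$ when $b$ approaches $mr_\varepsilon$. In that regime $m/\eta$ is not small, so the smallness must come from $\eta_0$ alone with $\eta$ held fixed. I would therefore state the choice of constants carefully. First fix the compact interval $B$ and the trapping region $D$, which fixes $\eta$. Then reduce $\eta_0$, as the paper's convention allows, so that $\alpha\max\{1,\eta^{-1}\}\eta_0<1$. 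If a bound uniform as $b\to(mr_\varepsilon)^+$ were needed instead, this step would fail, and one would have to use the thinness of $I$ (namely $r_{\max}-r_{\min}$ small) together with the mean value theorem.
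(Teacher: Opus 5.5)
Your proof is correct and is essentially the paper's argument. You differentiate $\mathcal G_b=u^{-1/2}$ by the chain rule, bound $u^{-3/2}$ and $z_2^{2\varepsilon/\lambda-1}$ uniformly using $\eta\le z_2\le 1$ on $D$, and take the smallness from the prefactor $m/\lambda$. Your explicit bounds ($u\ge1$ and $z_2^{2\varepsilon/\lambda-1}\le\max\{1,\eta^{-1}\}$), together with fixing $B$, $D$, $\eta$ before shrinking $\eta_0$, make precise what the paper leaves implicit in its constant $C$. Also note that your own case $2\varepsilon/\lambda\ge1$ already gives the $\eta$-free bound $(\alpha-\varepsilon)m/\lambda$, so the obstacle you describe as $b\to(mr_\varepsilon)^+$ only arises when $\lambda>2\varepsilon$.
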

In particular, for every fixed $\varphi_1\in\mathbb S^1$, the map
$r_1\mapsto \mathcal{G}_b(r_1,\varphi_1)$
is a uniform contraction.
\begin{proof}

Recall that

\[
\mathcal{G}_b(r_1,\varphi_1)
=
\left[
\frac{\alpha}{\varepsilon}
+
\left(
1-\frac{\alpha}{\varepsilon}
\right)
z_2^{2\varepsilon/\lambda}
\right]^{-1/2},
\]
where $z_2=b+mr_1\cos\varphi_1.$
Writing

\[
u(r_1,\varphi_1)
=
\frac{\alpha}{\varepsilon}
+
\left(
1-\frac{\alpha}{\varepsilon}
\right)
z_2^{2\varepsilon/\lambda},
\]
we have
$\mathcal{G}_b=u^{-1/2}.$
Differentiating with respect to $r_1$, we obtain
$$
\frac{\partial \mathcal{G}_b}{\partial r_1}
=
-\frac12
u^{-3/2}
\left(
1-\frac{\alpha}{\varepsilon}
\right)
\frac{2\varepsilon}{\lambda}
z_2^{2\varepsilon/\lambda-1}
m\cos\varphi_1.
$$
Therefore
\[
\left|
\frac{\partial G_b}{\partial r_1}
\right|
=
\frac{m}{\lambda}
|\alpha-\varepsilon|
\,|\cos\varphi_1|\,
z_2^{2\varepsilon/\lambda-1}
u^{-3/2}.
\]
Since $z_2\ge \eta>0$
and $D$ is compact, the quantities
$z_2^{2\varepsilon/\lambda-1}$ and $u^{-3/2}$ are uniformly bounded.
Hence there exists $C>0$ such that
$
\left|
\frac{\partial \mathcal{G}_b}{\partial r_1}
(r_1,\varphi_1)
\right|
\le
C\,\frac{m}{\lambda}.
$
Since $m\ll\lambda$, choosing $m/\lambda$ sufficiently small yields
$C\frac{m}{\lambda}<1. $
Setting
$\kappa=C\frac{m}{\lambda},$ the result follows. \\
\end{proof}

\begin{lemma}
\label{thm:invariant_graph}
Assume that $m\ll\lambda\ll1$ and let
$
\mathcal R_b=(\mathcal G_b,\mathcal F_b)
$
denote the return map on the trapping region
$
D=I\times\mathbb S^1.
$
Then there exists a unique invariant graph
$
\Gamma_b
=
\{(\gamma_b(\varphi_1),\varphi_1):
\varphi_1\in\mathbb S^1\},
$
with $\gamma_b\in C^3(\mathbb S^1)$, satisfying
\[
\gamma_b
\Bigl(
\mathcal F_b(\gamma_b(\varphi_1),\varphi_1)
\Bigr)
=
\mathcal G_b(\gamma_b(\varphi_1),\varphi_1),
\qquad
\forall\,\varphi_1\in\mathbb S^1.
\]
Moreover, $\Gamma_b$ is attracting: there exist constants
$C>0$ and $\rho\in(0,1)$ such that
\[
\operatorname{dist}
\bigl(
\mathcal R_b^n(r_0,\varphi_0),
\Gamma_b
\bigr)
\le
C\rho^n,
\]
for every forward orbit that remains in $D$.
Furthermore, if
\[
r^*(b)
=
\left[
\frac{\alpha}{\varepsilon}
+
\left(
1-\frac{\alpha}{\varepsilon}
\right)
b^{2\varepsilon/\lambda}
\right]^{-1/2},
\]
then
$
\|\gamma_b-r^*(b)\|_{C^3}
=
\mathcal O(m)
$
uniformly on compact parameter intervals
$B\subset(mr_\varepsilon,1)$. The map
\[
b\mapsto\gamma_b
\]
is of class $C^1$ as a map into
$C^3(\mathbb S^1)$, and there exists
$C_\gamma>0$ such that
$
\|\partial_b\gamma_b\|_{C^0}
\le
C_\gamma .
$
\end{lemma}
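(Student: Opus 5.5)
The natural route is the graph transform (invariant section theorem of Hirsch--Pugh--Shub), set up as a contraction on a space of Lipschitz graphs over $\mathbb S^1$. Let $\mathcal L_K=\{\gamma:\mathbb S^1\to I\ :\ \mathrm{Lip}(\gamma)\le K\}$ with the $C^0$ metric. Since $\mathcal R_b(D)\subset D$, the image of the graph of $\gamma\in\mathcal L_K$ lies in $D$. To write this image again as a graph, I would show that $\varphi_1\mapsto \mathcal F_b(\gamma(\varphi_1),\varphi_1)$ is a covering map of $\mathbb S^1$ with derivative bounded away from zero. Then the graph transform $T\gamma = \mathcal G_b\circ(\gamma,\mathrm{id})\circ h_\gamma^{-1}$, with $h_\gamma(\varphi_1)=\mathcal F_b(\gamma(\varphi_1),\varphi_1)$, is well defined.

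I expect this step to be the main obstacle. The angular map is $r_1(a_{11}\cos\varphi_1+a_{12}\sin\varphi_1)-\lambda^{-1}\ln(b+mr_1\cos\varphi_1)$. Its $\varphi_1$-derivative is $\mathcal O(1)+\lambda^{-1}\, m r_1\sin\varphi_1/z_2$, which is not obviously monotone and could vanish at critical points. The whole rank-one picture expects critical points there, so $h_\gamma$ need not be a diffeomorphism. The fallback is to replace invariance by the defining functional equation $\gamma(\mathcal F_b(\gamma,\varphi))=\mathcal G_b(\gamma,\varphi)$ only along orbits, and obtain $\Gamma_b$ as the attractor of the strongly contracting direction. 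Concretely, I would consider $\bigcap_n \mathcal R_b^n(D)$ and use that $\partial_{\varphi_1}\mathcal G_b=\mathcal O(m/\lambda)$ (same computation as Lemma~\ref{lem:radial_contraction}). This shows that the image of any horizontal curve lies in an $\mathcal O(m/\lambda)$-thin annulus around $r=r^*(b)$. The dominated splitting then comes from comparing the radial contraction $\kappa=Cm/\lambda$ with the angular derivative, which is of order $1/\lambda$ away from critical points. Cone fields around the angular direction are forward invariant there, and I would check that the determinant of $D\mathcal R_b$ is $\mathcal O(m)$, so the contraction persists even near the folds.

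Granting the graph structure, the contraction step is routine. For two graphs $\gamma,\tilde\gamma$, Lemma~\ref{lem:radial_contraction} gives $|T\gamma-T\tilde\gamma|_{C^0}\le(\kappa+\mathrm{Lip}\,\text{terms})|\gamma-\tilde\gamma|_{C^0}$, where the extra terms are controlled by $K\,|\partial_{r_1}\mathcal F_b|=\mathcal O(K(1+m/\lambda))$ times the radial gain. Banach's fixed point theorem then yields a unique invariant $\gamma_b$. Attraction with rate $\rho=\kappa$ follows by comparing $\mathcal G_b(r,\varphi)$ with $\mathcal G_b(\gamma_b(\varphi),\varphi)$ along orbits using the mean value theorem, which gives $\operatorname{dist}\le C\kappa^n$.

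For the estimate $\|\gamma_b-r^*(b)\|_{C^3}=\mathcal O(m)$, set $m=0$. Then $\mathcal G_b\equiv r^*(b)$ is constant, so the constant graph $r^*(b)$ is exactly invariant. Because $z_2\ge\eta$, every $r_1$- or $\varphi_1$-derivative of $\mathcal G_b$ up to order three carries a factor $m$. Hence the $C^0$ estimate follows from the fixed point bound $\|\gamma_b-r^*\|\le(1-\kappa)^{-1}\|T r^*-r^*\|=\mathcal O(m)$. I would get $C^3$ regularity and the $C^3$ bound from the $C^r$ section theorem, which needs the fibre contraction $\kappa$ to be smaller than the $3$rd power of the base contraction rate. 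This holds because $m\ll\lambda$ while the base expansion is of order $1/\lambda$. Finally, $b\mapsto\gamma_b$ is $C^1$ because the fixed point of a uniform contraction depends $C^1$ on parameters when the map does. Differentiating the invariance equation in $b$ gives $\partial_b\gamma_b$ as the fixed point of an affine contraction with forcing $\partial_b\mathcal G_b+\gamma_b'\,\partial_b\mathcal F_b$. Here $\partial_b\mathcal F_b=-1/(\lambda z_2)$ is multiplied by $\gamma_b'=\mathcal O(m)$, so the $C^0$ norm of the forcing is $\mathcal O(1+m/\lambda)$, which yields the constant $C_\gamma$.
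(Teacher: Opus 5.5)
Your outline is essentially the paper's argument: a graph transform with the Hirsch--Pugh--Shub fibre contraction/section theorem, a perturbation off the constant graph $r^*(b)$ (which is exactly invariant at $m=0$), and a linear fixed-point equation for $\partial_b\gamma_b$. However, the step you flag as the main obstacle is a genuine gap, and your fallback does not close it. The cone-field and small-determinant remarks only show that $\mathcal R_b^n(D)$ becomes thin. They do not produce a graph, so ``granting the graph structure'' grants exactly what is to be proved. The lift of $h_\gamma(\varphi_1)=\mathcal F_b(\gamma(\varphi_1),\varphi_1)$ is $2\pi$-periodic, so $h_\gamma$ has degree $0$. It is therefore never a covering, and it always has critical points, at the extrema of its lift. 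Consequently the $C^3$ section theorem does not apply, since it needs an invertible base map with $\kappa\,\sup_{\mathbb S^1}|h_\gamma'|^{-3}<1$. Your claim that domination holds ``because the base expansion is of order $1/\lambda$'' fails exactly at those critical points.

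Moreover, no argument can close this gap, because for $m>0$ the conclusion is incompatible with the map. First, $\mathcal R_b$ is injective on $\mathcal D$. The component $\mathcal G_b$ is strictly monotone in $z_2=b+mr_1\cos\varphi_1$, so equal images force equal $r_1\cos\varphi_1$. Since $|r_1A(\varphi_1)|<\sqrt2<\pi$, equality of $\mathcal F_b$ modulo $2\pi$ then forces equal $a_{11}r_1\cos\varphi_1+a_{12}r_1\sin\varphi_1$. Finally $a_{12}\neq0$, because $A$ is invertible and, after the normalising rotation, has second row $(m,0)$. Now suppose a continuous graph satisfied $\mathcal R_b(\Gamma_b)\subset\Gamma_b$. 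Then $f_b$ would be a continuous injective self-map of $\mathbb S^1$, hence a homeomorphism of degree $\pm1$. This contradicts the periodicity of its lift, and also the non-degenerate folds the rest of the paper relies on. So the invariant circle of the singular limit $m=0$ does not persist: the dynamics on it is not a diffeomorphism, and normal hyperbolicity fails at the folds. The paper's proof invokes the fibre contraction theorem for the graph transform without addressing this point, so it has the same gap. What you noticed is a defect of the statement itself, not a technicality. (The Wang--Young rank-one criterion does not need an invariant curve; it is formulated through a singular limit in which the map collapses onto a curve.)
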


\begin{proof}

By Lemma~\ref{lem:radial_contraction}, there exists
$\kappa\in(0,1)$ such that
$
\sup_D
\left|
\frac{\partial\mathcal G_b}{\partial r_1}
\right|
\le\kappa .
$
Hence, for every fixed $\varphi_1\in\mathbb S^1$, the fibre map
$
r_1\longmapsto \mathcal G_b(r_1,\varphi_1)
$
is a uniform contraction on $I$.

Since $
b+m r_1\cos\varphi_1\ge\eta>0 $
throughout $D$, the logarithmic singularity remains uniformly separated
from the trapping region. Consequently,
$\mathcal R_b$ extends as a $C^3$ map to a neighbourhood of $D$.
 By Lemma 7.1, the fibres
$r\mapsto \mathcal{G}_b(r,\varphi_1)$
are uniformly contracting with contraction factor
$\kappa<1$.
Since $\mathcal{R}_b$ extends as a $C^3$ map to a neighbourhood of D
and D is positively invariant,
the fibre contraction theorem
of  \cite[Ch. 3]{HPS}
 applies to the associated graph transform.
Therefore there exists a unique invariant graph
$\gamma_b\in C^3(\EU^1).$
 Hence there exists a unique $C^3$ invariant curve
$$
\gamma_b:\mathbb S^1\to I
$$
whose graph
$
\Gamma_b=\operatorname{graph}(\gamma_b)
$
is invariant under $\mathcal R_b$. The same theorem yields exponential attraction to the invariant graph.
Therefore there exist constants
$C>0$ and $\rho\in(0,1)$ such that
\[
\operatorname{dist}
\bigl(
\mathcal R_b^n(r_0,\varphi_0),
\Gamma_b
\bigr)
\le
C\rho^n
\]
for every orbit that remains in $D$.  Define
\[
r^*(b)
=
\left[
\frac{\alpha}{\varepsilon}
+
\left(
1-\frac{\alpha}{\varepsilon}
\right)
b^{2\varepsilon/\lambda}
\right]^{-1/2}.
\]

This is the fixed point of the radial map obtained from
$\mathcal G_b$ by setting $m=0$.
A Taylor expansion with respect to $m$ gives
\[
\mathcal G_b(r^*(b),\varphi_1)
=
r^*(b)+\mathcal O(m)
\]
uniformly in $(b,\varphi_1)$.
Using the invariance equation together with the contraction estimate for
$\mathcal G_b$, a standard graph-transform argument gives
\[
\|\gamma_b-r^*(b)\|_{C^0}
=
\mathcal O(m).
\]

To obtain the $C^3$ estimate, note that:

\begin{enumerate}
\item $\mathcal R_b$ extends as a $C^3$ map to a neighbourhood of $D$;
\item the fibre maps $r_1\mapsto\mathcal G_b(r_1,\varphi_1)$ are uniformly contracting;
\item all partial derivatives of $\mathcal R_b$ up to order three are uniformly bounded on $B\times D$;
\item for $m=0$, the invariant graph reduces to the constant graph
$r_1=r^*(b)$.
\end{enumerate}

Moreover,
\[
\mathcal G_b(r_1,\varphi_1)
=
\mathcal G_b^{(0)}(r_1)
+
\mathcal O(m)
\]
in the $C^3$ topology, where $\mathcal G_b^{(0)}$
denotes the radial map corresponding to $m=0$. Therefore the associated graph transform is an
$\mathcal O(m)$ perturbation of the graph transform whose fixed point is
$r_1=r^*(b)$.
Since the graph transform is a uniform contraction,
stability of fixed points under contractive perturbations yields
$
\|\gamma_b-r^*(b)\|_{C^3}
=
\mathcal O(m).
$

Differentiating the invariance equation up to third order then gives
$\gamma_b^{(j)}=\mathcal O(m),$ for $j=1,2,3,$
relative to the constant graph $r_1=r^*(b)$. Finally, differentiating the invariance equation with respect to the
parameter $b$ yields a linear fixed-point equation for
$\partial_b\gamma_b$.
Since the associated linear operator has norm bounded by
$\kappa<1$, one obtains
$
\|\partial_b\gamma_b\|_{C^0}
\le
C_\gamma.
$
The same argument applied to difference quotients proves that
$
b\mapsto\gamma_b
$
is of class $C^1$ as a map into $C^3(\mathbb S^1)$. \\
\end{proof}

\begin{proposition}[Reduction to the invariant curve]
\label{prop:reduction}

Let $\Gamma_b=\operatorname{graph}(\gamma_b)$ be the invariant curve
given by Lemma \ref{thm:invariant_graph}.
Then the dynamics of $\mathcal R_b$ restricted to $\Gamma_b$
is completely described by the one-dimensional map
$
f_b:\mathbb S^1\to\mathbb S^1,$ where $f_b(\varphi_1)
=
\mathcal{F}_b(\gamma_b(\varphi_1),\varphi_1).
$
Moreover,
$$
\mathcal R_b
(\gamma_b(\varphi_1),\varphi_1)
=
(\gamma_b(f_b(\varphi_1)),
f_b(\varphi_1)).
$$
\end{proposition}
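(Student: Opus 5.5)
The plan is to read the statement off the invariance equation of Lemma~\ref{thm:invariant_graph}; essentially no new analysis is needed. Fix $\varphi_1\in\mathbb S^1$ and consider the point $P=(\gamma_b(\varphi_1),\varphi_1)\in\Gamma_b\subset D$. First, $f_b$ is a well-defined $C^3$ map of the circle: $\gamma_b\in C^3(\mathbb S^1)$ takes values in $I$, so $P\in D$. On $D$ we have $b+mr_1\cos\varphi_1\ge\eta>0$, hence $\mathcal F_b$ is $C^3$ near $D$. Then $f_b(\varphi_1)=\mathcal F_b(\gamma_b(\varphi_1),\varphi_1)$, reduced mod $2\pi$, is a $C^3$ map $\mathbb S^1\to\mathbb S^1$.

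Second, compute $\mathcal R_b(P)=(\mathcal G_b(P),\mathcal F_b(P))$. The angular coordinate is $f_b(\varphi_1)$ by definition. For the radial coordinate, the invariance identity of Lemma~\ref{thm:invariant_graph} gives $\mathcal G_b(\gamma_b(\varphi_1),\varphi_1)=\gamma_b\bigl(\mathcal F_b(\gamma_b(\varphi_1),\varphi_1)\bigr)=\gamma_b(f_b(\varphi_1))$. This yields the displayed formula, so $\mathcal R_b(\Gamma_b)\subset\Gamma_b$.

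Third, the phrase \emph{completely described} will be formalised as a conjugacy. Let $h_b:\mathbb S^1\to\Gamma_b$ be $h_b(\varphi_1)=(\gamma_b(\varphi_1),\varphi_1)$. This is a $C^3$ diffeomorphism onto $\Gamma_b$, since it is a graph map with inverse the restricted projection $(r_1,\varphi_1)\mapsto\varphi_1$. The identity above reads $\mathcal R_b\circ h_b=h_b\circ f_b$, and induction gives $\mathcal R_b^n\circ h_b=h_b\circ f_b^n$ for all $n\ge0$. Hence orbits, periodic points, invariant measures and derivatives along $\Gamma_b$ correspond exactly. In particular, the tangential derivative of $\mathcal R_b|_{\Gamma_b}$ is conjugate to $f_b'$ via $Dh_b$.

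The only point needing care is the invariance identity. Its left-hand side evaluates $\gamma_b$ at $\mathcal F_b(\cdot)$ understood mod $2\pi$, so we must use that $\gamma_b$ is $2\pi$-periodic, which holds because it is defined on $\mathbb S^1$. I expect no real obstacle beyond checking that $\Gamma_b\subset D$, so that the uniform lower bound $\eta$ on $z_2$ applies. This follows from $\gamma_b(\mathbb S^1)\subset I$, as given by the fibre contraction construction.
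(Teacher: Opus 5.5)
Your proposal is correct and follows essentially the same argument as the paper: the angular coordinate of $\mathcal R_b(\gamma_b(\varphi_1),\varphi_1)$ is $f_b(\varphi_1)$ by definition, and the radial one equals $\gamma_b(f_b(\varphi_1))$ by the invariance of $\Gamma_b$ from Lemma~\ref{thm:invariant_graph}. Your conjugacy $\mathcal R_b\circ h_b=h_b\circ f_b$ and the well-definedness check are a harmless formalisation of the phrase ``completely described'', which the paper leaves implicit.
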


\begin{proof}

Since $\Gamma_b$ is invariant under $\mathcal R_b$,
$
\mathcal R_b
(\gamma_b(\varphi_1),\varphi_1)
\in\Gamma_b.
$
Therefore there exists a unique angle
$f_b(\varphi)\in\mathbb S^1$
such that
$
\mathcal R_b
(\gamma_b(\varphi_1),\varphi_1)
=
(\gamma_b(f_b(\varphi_1)),
f_b(\varphi_1)).
$
By definition of the second component of $\mathcal R_b$,
$
f_b(\varphi_1)
=
\mathcal{F}_b(\gamma_b(\varphi_1),\varphi_1).
$
This proves the claim.

\end{proof}

\begin{remark}
\label{rem:gamma}

The compact-parameter assumption in this section is used only to
construct and control the invariant graph away from the logarithmic
singularity. The singular regime
$b\to(mr_\varepsilon)^+$
is analysed in the subsequent sections through parameter-dependent
estimates for the reduced map \(f_b\).
For every fixed admissible parameter value, however,
the asymptotic dynamics inside the trapping region is completely
encoded by the one-dimensional map \(f_b\).
\end{remark}

\begin{figure}[ht]
\begin{center}
 \includegraphics[height=6.5cm]{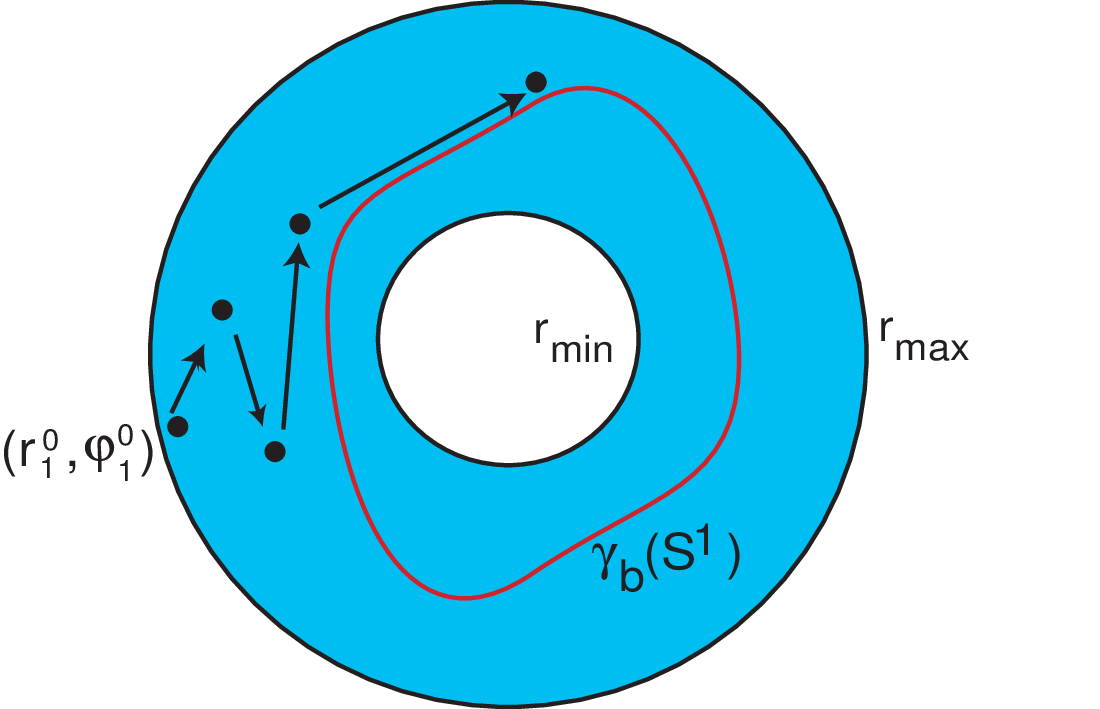}
\end{center}
\caption{\small  Geometric representation of the invariant curve $\gamma_b(\EU^1)$ within the annulus. The sequence of iterates $(r_1^n, \varphi_1^n)$ shows exponential radial attraction toward the curve, governed by the contraction rate $\kappa \in (0,1)$. }
 \label{Imagem 5}
\end{figure}

\section{An overview of the theory of rank-one strange attractors}
\label{s: theory}

The theory of rank-one maps, systematically developed by Wang and Young \cite{WY2001, WY, WY2006, WY2003, WY2008}, concerns the dynamics of maps with some instability in one direction of the phase space and strong contraction in all other directions of the phase space. This theory   originated with the work of Jacobson \cite{Ja81} on the quadratic family and the  analysis of strongly dissipative H\'enon maps by Benedicks and Carleson \cite{BC91}.  The method can be seen as a generalisation of \cite[Ch. 5]{DRV}.

It is a comprehensive theory for a nonuniformly hyperbolic setting that is flexible enough to be applicable to concrete systems of differential equations and has experienced unprecedented
growth in the last 20 years in the context of non-autonomous systems. It provides checkable conditions that imply the existence of nonuniformly hyperbolic dynamics and  SRB measures in parametrised families $F_{\lambda}$ of dissipative embeddings in $\RR^n$ for $n \geq 2$.  
 This theory has already been applied to several non-autonomous dynamical scenarios, including systems with stable foci and  limit cycles subject to pulsatile drives  \cite{WY, WY2003} and heteroclinic bifurcations \cite{Rodrigues2021}.

This section introduces the basic objects and notions used throughout the paper. We adapt the standing hypotheses of   \cite{WY2008} to the present setting, where the effective dynamics becomes one-dimensional after strong contraction in the transverse direction.  The Euclidean metric on $\mathbb{R}$ is denoted by $d(x, y) = |x - y|$, and the distance from a point to a set $C\subset \EU^1$ is $\dpt d(x, C) = \min_{\hat{x}\in C} |x - \hat{x}|$.\\

Let $I$ be a compact interval where the terminal points are identified or the circle $\EU^1$, and let $f: I \to I$ be a $C^2$ map. Let $C := \{x \in I : f'(x) = 0\}$ be the \emph{critical set} (it contains all critical points of $f$, which we assume to be finite); the image under $f$ of a critical point is a \emph{critical value}. \\ 

For $\delta > 0$, denote by $C_\delta := \{x \in I : d(x, C) < \delta\}$ the $\delta$-neighbourhood of $C$ -- sometimes called \emph{``dangerous''} or \emph{``critical''} zone.
 We now introduce the meaning of Misiurewicz map.
\begin{definition}[\cite{WY2008}, adapted]
\label{def-Misiur}
We say that $f$ is a \emph{Misiurewicz map} if there exists $\delta_0 > 0$ such that:
\bigbreak
\noindent
 (a) Non-recurrence of critical orbits:
For every critical point $\hat{x} \in C$, $d(f^n(\hat{x}), C) > 2\delta_0$ for all $n \ge 1$.\\

\smallbreak
\noindent
(b) Expansion outside the critical neighbourhood:
There exist constants $\lambda_0 > 0$, $M_0 \in \mathbb{N}$, and $0 < c_0 \le 1$ such that:
\begin{itemize}
    \item[(b1)] For all $n \ge M_0$, if $x, f(x), \ldots, f^{n-1}(x) \notin C_{\delta_0}$, then $|(f^n)'(x)| \ge e^{\lambda_0 n}$.
    \item[(b2)] For all $n \ge 1$, if $x, f(x), \ldots, f^{n-1}(x) \notin C_{\delta_0}$ and $f^n(x) \in C_{\delta_0}$, then $|(f^n)'(x)| \ge c_0 e^{\lambda_0 n}$.\\
\end{itemize}
\smallbreak
\noindent
(c) Behaviour inside the critical neighbourhood:
There exists $K_0 > 1$ such that for all $x \in C_{\delta_0}$:
\begin{itemize}
    \item[(c1)] $f''(x) \neq 0$ (the critical points are non-degenerate).
    \item[(c2)] There exists an integer $p(x) \ge 1$ satisfying $K_0^{-1} \log \frac{1}{d(x,C)} \le p(x) \le K_0 \log \frac{1}{d(x,C)}$ such that $f^j(x) \notin C_{\delta_0}$ for $0 \le j < p(x)$, and $|(f^{p(x)})'(x)| \ge c_0^{-1} e^{\frac{1}{3}\lambda_0 p(x)}$.\\
\end{itemize}
\end{definition}

\subsection*{Digestive remark about Definition \ref{def-Misiur}}
 Property \textbf{(a)} 
ensures that once a critical point  is achieved, its 
forward orbit never returns to a neighbourhood of any critical point. This 
prevents the system from accumulating zero-derivative effects, which would 
otherwise destroy the expansion needed for chaos. 

Properties \textbf{(b1)} and \textbf{(b2)} guarantee that the map is a powerful 
``stretcher" outside the critical zones $C_\delta$, $\delta > 0$. Any small interval of points is 
stretched exponentially as long as it stays in the safe region. While 
\textbf{(b1)} covers orbits that end in the safe region, \textbf{(b2)} allows 
for a slight loss of momentum (represented by the constant $c_0>0$) when an 
orbit lands right at the doorstep of a critical neighbourhood, provided it 
maintained exponential growth until that point.

Finally, property \textbf{(c)} describes the ``recovery phase" for orbits that 
fall into the critical zone. Condition \textbf{(c1)} ensures the fold 
is non-degenerate, providing a predictable curvature. Condition 
\textbf{(c2)} defines the  \emph{binding mechanism}: if a point lands very 
near a critical point, it must ``shadow" the critical value's orbit for a 
specific binding time $p(x)$. During this period, the point is protected 
from falling into other critical zones, and by the end of it, the initial 
loss of derivative is fully compensated, allowing the orbit to emerge with 
a renewed exponential boost.\\

Now we state the main result that will be used to prove Theorem \ref{main1}.  The set $I\subset \RR^+$ is a compact interval of $\RR$.\\

\begin{theorem}[Rank-one reduction criterion, \cite{WY2008}, adapted]
\label{thm:WY}
Let $\mathcal{R}_b: I \times \EU^1 \to I \times \EU^1$ be a family of $C^3$ embeddings defined by $$\mathcal{R}_b(r_1, \varphi_1) = (\mathcal{G}_b(r_1, \varphi_1), \mathcal{F}_b(r_1, \varphi_1)),$$ where $b \in [b_0, b_1]\subset \RR^+$ and $b_0<b_1$. Suppose:\\
\begin{itemize}
    \item[\textbf{(H1)}]  {Strong transverse contraction:} $|\frac{\partial \mathcal{G}_b}{\partial r_1}(r_1, \varphi_1)| \le \kappa < 1$ for all $(r_1, \varphi_1)\in I \times \EU^1$.\\
   \item[\textbf{(H2)}] {Invariant graph:}
There exists a unique $C^3$ invariant graph
$
r_1=\gamma_b(\varphi_1),
$
whose graph
\[
\Gamma_b=
\{(\gamma_b(\varphi_1),\varphi_1):
\varphi_1\in\mathbb S^1\}
\]
is invariant under $\mathcal R_b$. Equivalently,
\[
\gamma_b\!\left(
\mathcal{F}_b(\gamma_b(\varphi_1),\varphi_1)
\right)
=
\mathcal{G}_b(\gamma_b(\varphi_1),\varphi_1),
\qquad
\forall\,\varphi_1\in\mathbb S^1.
\]
    \item[\textbf{(H3)}] {Misiurewicz dynamics:} The reduced map $f_b(\varphi_1) = \mathcal{F}_b(\gamma_b(\varphi_1), \varphi_1)$ is a $C^3$-Misiurewicz map for some $b^*\in [b_0, b_1]$ (according to Definition \ref{def-Misiur}).\\
    \item[\textbf{(H4)}] {Parameter transversality:} If $\hat{\varphi}_1(b)$ is a critical point of $f_b$, then the speed of the critical value $\xi(b) = f_b(\hat{\varphi}_1(b))$(\footnote{Continuation of the critical value  as function of $b$.}) satisfies $\frac{d}{db}|_{b=b^*} \xi(b) \neq 0$. \\
    \item[\textbf{(H5)}] {Non-degeneracy:} For every critical point $\hat{\varphi}_1$ of $f_{b^*}$, $\frac{\partial \mathcal{F}_{b^*}}{\partial r_1}(\gamma_{b^*}(\hat{\varphi}_1), \hat{\varphi}_1) \neq 0$.\\
\end{itemize}
Then there exists a set $\Delta \subset (b_0, b_1)$ of positive Lebesgue measure such that for $b \in \Delta$, $\mathcal{R}_b$ admits a ``large'' strange attractor $\Lambda_b$ with an ergodic SRB measure.\\
\end{theorem}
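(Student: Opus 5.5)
The idea is to treat Theorem~\ref{thm:WY} as an adaptation of the Wang--Young result \cite{WY2008}. Its proof therefore consists of checking that hypotheses \textbf{(H1)}--\textbf{(H5)} place the family $\mathcal R_b$ inside the class of rank-one families covered there, after which their main theorem gives the conclusion. We then add a short geometric argument for the ``large'' property.

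\textbf{Step 1: place $\mathcal R_b$ in the Wang--Young normal form.} Using the invariant graph $\Gamma_b$ from \textbf{(H2)}, we straighten coordinates by setting $\tilde r=r_1-\gamma_b(\varphi_1)$. In these coordinates the family reads
\[
(\tilde r,\varphi_1)\mapsto\bigl(\tilde{\mathcal G}_b(\tilde r,\varphi_1),\,f_b(\varphi_1)+\tilde r\,\partial_{r_1}\mathcal F_b+\mathcal O(\tilde r^2)\bigr).
\]
By \textbf{(H1)}, $|\partial_{\tilde r}\tilde{\mathcal G}_b|\le\kappa$. Since $\gamma_b$ depends in a $C^1$ way on $b$ into $C^3$ (Lemma~\ref{thm:invariant_graph}), the family is a $C^3$ family of $C^3$ embeddings. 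Its singular limit is the one-dimensional family $f_b$ on $\mathbb S^1$, and $\det D\mathcal R_b$ is small. This is the setting of \cite{WY2008}, where the $2$D maps are viewed as small-determinant perturbations of $(\tilde r,\varphi)\mapsto(0,f_b(\varphi))$. The contraction rate of order $m/\lambda$ plays the role of their small parameter.

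\textbf{Step 2: transfer the one-dimensional hypotheses.} Wang--Young require three things of the singular limit at $b^*$: it is Misiurewicz, which is \textbf{(H3)}; the critical values move transversally with the parameter, which is \textbf{(H4)}; and the critical points are nondegenerate in the $2$D sense, meaning the fold of $\Gamma_b$-nearby leaves is genuine. For the last condition we use \textbf{(H5)}. Because $\partial_{r_1}\mathcal F_{b^*}\ne0$ at critical points, the image of a short vertical segment is not collapsed at the critical angle, and this is exactly their nondegeneracy condition. Their theorem then yields a set $\Delta\subset(b_0,b_1)$ of positive Lebesgue measure with the required properties.

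For $b\in\Delta$, $\mathcal R_b$ has a positive Lyapunov exponent along $\Gamma_b$-directions on a set of full Lebesgue measure in the trapping region. It also has an attractor $\Lambda_b=\bigcap_n\mathcal R_b^n(D)$ carrying an ergodic SRB measure. Uniqueness and irreducibility follow because $f_b$ is a topologically mixing Misiurewicz circle map with a unique acim, which is the case of a single ergodic component.

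\textbf{Step 3: largeness.} $\Lambda_b$ lies within $C\rho^n$ of $\Gamma_b$ for all $n$, and its projection to $\mathbb S^1$ contains the support of the acim of $f_b$. We claim this support is all of $\mathbb S^1$ when $f_b$ has degree $\neq0$, or at least that it is not contained in an arc. We would argue this from the winding of the logarithmic term, since $f_b$ covers the circle. Then $\Lambda_b$ is not contained in any contractible subset of the annulus, and strong radial contraction together with angular expansion gives condition (2) of Definition~\ref{large_def}.

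The main obstacle is Step 2, specifically matching the adapted hypotheses to the precise form of \cite{WY2008}. Their theorem is phrased for families where the contraction parameter tends to zero independently of the circle-map parameter. Here $\kappa\sim m/\lambda$ is fixed, so we would first fix $b^*$ and then take $m/\lambda$ below a threshold depending on the Misiurewicz constants $\delta_0,\lambda_0,c_0$. I would first try to show that this threshold is uniform on a neighbourhood of $b^*$, using the $C^1$ dependence of $\gamma_b$ on $b$.
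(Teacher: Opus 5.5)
Your route is the paper's route. The paper offers nothing beyond the citation of \cite{WY2008} and the remark that $\Gamma_b$ replaces the reference curve $y=0$. As a proof of the statement as written, however, it does not close, and the obstacle you flag at the end is not a technicality.

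The Wang--Young theorem is perturbative. It concerns families that converge in $C^3$, as a dissipation parameter tends to $0$, to a singular limit $(y,\varphi_1)\mapsto(0,F(y,\varphi_1))$. It yields a positive-measure parameter set only when the dissipation lies below a threshold fixed by that singular family, through its Misiurewicz constants $\delta_0,\lambda_0,c_0$ and its $C^3$ bounds. Hypothesis \textbf{(H1)} only provides some $\kappa<1$ for a single one-parameter family, with nothing tending to zero. The facts ``$\det D\mathcal R_b$ is small'' and ``contraction of order $m/\lambda$'' belong to the paper's particular map, not to the hypotheses of the theorem, so shrinking $m/\lambda$ means adding a hypothesis. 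Even for the paper's family this is not routine: $f_b$, its Misiurewicz constants and its $C^3$ norm (at least of order $\lambda^{-1}$) all move with $m$ and $\lambda$ (recall $b\sim m$). There is therefore no fixed singular limit to perturb from without a reparametrisation or a quantitative version of the theorem.

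Likewise, \textbf{(H4)} is not the Wang--Young transversality condition. That condition requires $\frac{d}{db}\bigl(\xi(b)-p(b)\bigr)\neq0$ at $b^*$, where $p(b)$ is the hyperbolic continuation of the critical value. The condition $\xi'(b^*)\neq0$ alone allows the critical value to move together with its continuation, and then the parameter-exclusion argument breaks down.

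More fundamentally, \textbf{(H2)} is incompatible with \textbf{(H3)} for an embedding; read literally, the hypotheses of the theorem are inconsistent. Invariance means $\mathcal R_b(\gamma_b(\varphi_1),\varphi_1)=(\gamma_b(f_b(\varphi_1)),f_b(\varphi_1))$. Injectivity of $\mathcal R_b$ then forces $f_b$ to be injective, i.e.\ a circle homeomorphism. Such a map has no non-degenerate critical points, and it cannot satisfy (b1), since $\int_{\mathbb S^1}|(f_b^n)'|=2\pi$ for all $n$. So the invariant graph cannot be the organising object of your Steps 1 and 3.

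Your Step 3 exhibits the contradiction directly. Exponential attraction to $\Gamma_b$ gives $\Lambda_b=\bigcap_n\mathcal R_b^n(D)\subset\Gamma_b$, on which $\mathcal R_b$ is conjugate to $f_b$. You then ask the acim of a folding map to have full support there. A rank-one attractor is a folded, H\'enon-like set, not a subset of an invariant curve; in \cite{WY2008} the reference object is the singular limit.

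A workable normalisation uses a $\varphi_1$-independent reference, e.g.\ $y=r_1-r^*(b)$ in the paper's family. It is smooth in $b$, whereas $\gamma_b$ is only $C^1$ in $b$ and \cite{WY2008} require joint $C^3$ regularity. One then compares $\mathcal R_b$ with $(y,\varphi_1)\mapsto\bigl(0,\mathcal F_b(r^*(b)+y,\varphi_1)\bigr)$ and takes $\varphi_1\mapsto\mathcal F_b(r^*(b),\varphi_1)$ as the one-dimensional family.

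Finally, largeness does not follow from \textbf{(H1)}--\textbf{(H5)}. Your argument uses the logarithmic winding and a degree condition, and neither appears in the statement.
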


Theorem \ref{thm:WY} states that if the map  $\mathcal{R}_b$
flattens the space strongly in one direction (\textbf{(H1)--(H2)}), 
possesses a well-behaved ``stretch-and-fold" dynamics in the other (\textbf{H3}), 
and moves its critical points  as the parameter $b$ varies (\textbf{H4}), 
then the system inevitably collapses onto a strange attractor, say $\Lambda_b$.   The dynamics are genuinely non-uniformly hyperbolic: a Central Limit Theorem holds and correlations decay at an exponential rate (cf. \cite{WY}). 

\begin{remark} 
Theorem \ref{thm:WY} is a reformulation of the
rank-one criterion of Wang--Young  \cite{WY2008} adapted to the
present setting, where the normally attracting
manifold is represented by the invariant graph
$r=\gamma_b(\varphi_1)$ instead of the reference
curve $y=0$.
\end{remark}

 \section{Preparatory results}
\label{s: Preparatory}
In this section, we establish several preparatory estimates concerning the regularity and parameter dependence of the radial component. In the following, $\mathcal{G}_b$ denotes the radial component of the first return map $\mathcal{R}_b$ as derived in Proposition \ref{thm:0}, and $\gamma_b(\varphi_1)$ is the invariant curve whose existence and attractivity were proved in Lemma \ref{thm:invariant_graph}.The following result establishes that the derivatives of the invariant curve and the radial map remain controlled by the small perturbation parameter $m$. The following estimates are understood on compact parameter intervals
\(B\subset(mr_\varepsilon,1)\), where the logarithmic singularity is
uniformly avoided. In what follows, we are going to assume that  $B=[b_0,b_1]\subset (m r_\varepsilon,1)$ is a compact parameter interval such that
$$
b+m r_1\cos\varphi_1\geq \eta>0,
\qquad
\forall (r_1,\varphi_1)\in D,
\quad
\forall b\in B.
$$

\bigbreak

 \begin{lemma}
\label{Lemma Aux1}
Assume that $m\ll \lambda.$ Then, for \(b\in B\) and \(\varphi_1\in\mathbb S^1\), the
following estimates hold:\\

\begin{enumerate}

\item
$
\partial_{\varphi_1}
\mathcal{G}_b(\gamma_b(\varphi_1),\varphi_1)
=
\mathcal O\!\left(\frac{m}{\lambda}\right).
$ \\
\item
$
\gamma_b'(\varphi_1)
=
\mathcal O(m).
$
\\
\item
$
\gamma_b''(\varphi_1)
=
\mathcal O(m).
$
\\
\end{enumerate}
The estimates in \emph{(2)} and \emph{(3)} are uniform for
\(b\in B\).
\end{lemma}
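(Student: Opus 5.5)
For item (1), I will differentiate the explicit expression for $\mathcal G_b$ with respect to $\varphi_1$, exactly as in the proof of Lemma~\ref{lem:radial_contraction}. Writing $z_2=b+mr_1\cos\varphi_1$ and $u=\frac{\alpha}{\varepsilon}+(1-\frac{\alpha}{\varepsilon})z_2^{2\varepsilon/\lambda}$, the chain rule gives
\[
\partial_{\varphi_1}\mathcal G_b(r_1,\varphi_1)=-\frac{m}{\lambda}\,(\alpha-\varepsilon)\,\frac{1}{\varepsilon}\cdot\varepsilon\, r_1\sin\varphi_1\, z_2^{2\varepsilon/\lambda-1}u^{-3/2}.
\]
On $B\times D$ we have $z_2\ge\eta>0$, $u$ is bounded away from zero (since $u^{-1/2}=\mathcal G_b\in I$ and $I$ is compact in $\mathbb R^+$), and $r_1\le r_{\max}$. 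Hence the prefactor multiplying $m/\lambda$ is uniformly bounded, and evaluating at $r_1=\gamma_b(\varphi_1)\in I$ gives $\mathcal O(m/\lambda)$. This step is routine.

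For items (2) and (3), I will differentiate the invariance equation of Lemma~\ref{thm:invariant_graph},
\[
\gamma_b\bigl(f_b(\varphi_1)\bigr)=\mathcal G_b(\gamma_b(\varphi_1),\varphi_1),\qquad f_b(\varphi_1)=\mathcal F_b(\gamma_b(\varphi_1),\varphi_1).
\]
The first derivative is
\[
\gamma_b'(f_b(\varphi_1))\,f_b'(\varphi_1)=\partial_{r_1}\mathcal G_b\,\gamma_b'(\varphi_1)+\partial_{\varphi_1}\mathcal G_b .
\]
I will not use this identity directly to bound $\gamma_b'$, because $f_b'$ can vanish. Instead I will invoke the conclusion $\|\gamma_b-r^*(b)\|_{C^3}=\mathcal O(m)$ of Lemma~\ref{thm:invariant_graph}. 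Since $r^*(b)$ is constant in $\varphi_1$, its derivatives vanish, so $\|\gamma_b'\|_{C^0}+\|\gamma_b''\|_{C^0}\le\|\gamma_b-r^*(b)\|_{C^3}=\mathcal O(m)$ with a constant uniform on $B$. This gives (2) and (3) immediately.

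As a consistency check, and to make the uniformity explicit, I would also argue through the graph transform. The function $\gamma_b'$ solves the fixed-point equation
\[
\gamma'(\psi)=\frac{\partial_{r_1}\mathcal G_b\,\gamma'+\partial_{\varphi_1}\mathcal G_b}{f_b'}
\]
evaluated along preimages. Alternatively, one can use the backward formulation via the cone field: the derivative of the graph is transported by $D\mathcal R_b$, and horizontal cones are invariant because the $r_1$-entries of $D\mathcal G_b$ are $\mathcal O(m/\lambda)$ while $\partial_{\varphi_1}\mathcal F_b\approx\frac{m}{\lambda}\frac{r_1\sin\varphi_1}{z_2}+r_1(\cdots)$. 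Here is the main obstacle. A naive cone argument yields only $\gamma_b'=\mathcal O(m/\lambda)$ from item (1), not $\mathcal O(m)$. The sharper $\mathcal O(m)$ therefore has to come from the $C^3$-perturbation statement of Lemma~\ref{thm:invariant_graph}, where the graph transform is an $\mathcal O(m)$ perturbation of the constant graph. The dependence on $\lambda$ is absorbed into constants, since $\lambda$ is fixed and $\eta$, $B$ are compact. I will therefore rest (2) and (3) on that lemma and state that the implicit constants depend on $\lambda,\eta,B$ but not on $b\in B$, which gives the claimed uniformity.
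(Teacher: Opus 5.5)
Your proposal is correct relative to the paper and follows its proof essentially verbatim. Item (1) is the same differentiation of $\mathcal G_b=u^{-1/2}$ with uniform bounds on $B\times D$; you get $u\ge r_{\max}^{-2}$ from $\mathcal G_b(D)\subset I$, where the paper gets $u\ge c_0$ from $z_2\le M<1$. Items (2)--(3) are read off from $\|\gamma_b-r^*(b)\|_{C^3}=\mathcal O(m)$ in Lemma~\ref{thm:invariant_graph}, using that $r^*(b)$ is constant in $\varphi_1$.

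Your caveat that (2)--(3) rest entirely on that lemma is well placed, and this dependence is a weakness you share with the paper. $\mathcal R_b$ is an embedding, while $\mathcal F_b$ is a real-valued function on $I\times\mathbb S^1$ reduced mod $2\pi$. So on any invariant graph the induced map $f_b$ would be an injective circle map of degree zero. That is impossible, since injective circle maps are homeomorphisms, so the $C^3$ estimate you invoke should not be regarded as established.
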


\begin{proof}

\begin{enumerate}

\item
Recall that
$\mathcal{G}_b(r_1,\varphi_1)
=
u(r_1,\varphi_1)^{-1/2},$
where

\begin{eqnarray*}
u(r_1,\varphi_1) &=&\frac{\alpha}{\varepsilon} +\left(1-\frac{\alpha}{\varepsilon}\right)z_2^{2\varepsilon/\lambda} \\
z_2&=&b+m r_1\cos\varphi_1.
\end{eqnarray*}
Since \(0<\varepsilon<\alpha\), this may equivalently be written as
\[
u(r_1,\varphi_1)
=
\frac{\alpha}{\varepsilon}
-
\left(
\frac{\alpha}{\varepsilon}-1
\right)
z_2^{2\varepsilon/\lambda}.
\]
One knows that $z_2=b+m r_1\cos\varphi_1\geq \eta>0$
on \(B\times D\). Moreover \(D\) is compact and, after restricting the
trapping region if necessary, \(z_2\leq M<1\). Hence \(u\) is bounded
away from zero. Indeed,
\[
u(r_1,\varphi_1)
\geq
\frac{\alpha}{\varepsilon}
-
\left(
\frac{\alpha}{\varepsilon}-1
\right)
M^{2\varepsilon/\lambda}
=:c_0>0.
\]
Therefore \(u^{-1}\) is uniformly bounded on \(B\times D\).
Differentiating \(\mathcal{G}_b\) with respect to \(\varphi_1\), we obtain
\[
\partial_{\varphi_1}\mathcal{G}_b(r_1, \varphi_1)
=
-\frac12
u^{-3/2}
\left(
1-\frac{\alpha}{\varepsilon}
\right)
\frac{2\varepsilon}{\lambda}
z_2^{2\varepsilon/\lambda-1}
(-m r_1\sin\varphi_1).
\]
Consequently,
\[
\left|
\partial_{\varphi_1}\mathcal{G}_b(r_1,\varphi_1)
\right|
\leq
C
\frac{m}{\lambda}
\,
z_2^{2\varepsilon/\lambda-1},
\]
for some constant \(C>0\), independent of \(m\), \(b\), \(r_1\) and
\(\varphi_1\).
Since \(0<z_2\leq M<1\) and \(z_2\geq \eta\), we have
\[
z_2^{2\varepsilon/\lambda-1}
=
z_2^{-1}z_2^{2\varepsilon/\lambda}
\leq
\eta^{-1},
\]
because \(z_2^{2\varepsilon/\lambda}\leq 1\). Hence
$
\left|
\partial_{\varphi_1}\mathcal{G}_b(r_1,\varphi_1)
\right|
\leq
C_1\frac{m}{\lambda},
$
uniformly on \(B\times D\). Evaluating at
\(r_1=\gamma_b(\varphi_1)\) gives
\[
\partial_{\varphi_1}
\mathcal{G}_b(\gamma_b(\varphi_1),\varphi_1)
=
\mathcal O\!\left(\frac{m}{\lambda}\right).
\]
\\

\item
By Lemma \ref{thm:invariant_graph}, one has
$
\|\gamma_b-r^*(b)\|_{C^3}
=
\mathcal O(m),
$
uniformly for \(b\in B\), where
$$
r^*(b)
=
\left[
\frac{\alpha}{\varepsilon}
+
\left(
1-\frac{\alpha}{\varepsilon}
\right)
b^{2\varepsilon/\lambda}
\right]^{-1/2}.
$$
In particular,
$\|\gamma_b-r^*(b)\|_{C^2}
=
\mathcal O(m).
$
Since \(r^*(b)\) is independent of \(\varphi_1\), we have
$(r^*(b))'=0.$ Therefore
$
\gamma_b'
=
(\gamma_b-r^*(b))',
$
and so
$
\|\gamma_b'\|_\infty
\leq
\|\gamma_b-r^*(b)\|_{C^2}
=
\mathcal O(m).
$
Thus
$$
\gamma_b'(\varphi_1)
=
\mathcal O(m),
\qquad
\forall \varphi_1\in\mathbb S^1.
$$

\item
Again, since $r^*(b)$ is independent of \(\varphi_1\),
$(r^*(b))''=0.$
Hence
$
\gamma_b''
=
(\gamma_b-r^*(b))''.
$
Therefore
$
\|\gamma_b''\|_\infty
\leq
\|\gamma_b-r^*(b)\|_{C^2}
=
\mathcal O(m).
$
Consequently,
$
\gamma_b''(\varphi_1)
=
\mathcal O(m),
\forall \varphi_1\in\mathbb S^1.
$

\end{enumerate}

\end{proof}

Reminding Proposition \ref{prop:reduction}, the reduced one-dimensional map $f_b: \mathbb{S}^1 \to \mathbb{S}^1$, obtained by restricting the map $\mathcal{R}_b$ to the unique invariant graph $r_1 = \gamma_b(\varphi_1)$, is given by:
\begin{equation}
\label{f_b def} 
f_b(\varphi_1) = \mathcal{F}_b(\gamma_b(\varphi_1), \varphi_1) = \gamma_b(\varphi_1) A(\varphi_1) - \frac{1}{\lambda} \ln E(\varphi_1)  \pmod{2\pi},
\end{equation}
where:
\begin{eqnarray*}
A(\varphi_1) &=& a_{11} \cos \varphi_1 + a_{12} \sin \varphi_1\\
E(\varphi_1) &=& b + m \gamma_b(\varphi_1) \cos \varphi_1.\\
\end{eqnarray*}
  The map $f_b$ captures the dynamics on the attracting invariant curve given in Lemma \ref{thm:invariant_graph}.\\

\begin{lemma}
\label{Lemma Aux2}
Let
$
f_b(\varphi_1)
=
\mathcal{F}_b(\gamma_b(\varphi_1),\varphi_1)
$
be the reduced one-dimensional map.  Assume that
$
m\ll\lambda\ll1.
$
Then
$$
f_b'(\varphi_1)
=
\frac{m}{\lambda}
\frac{\gamma_b(\varphi_1)\sin\varphi_1}
     {E_b(\varphi_1)}
+
R_b(\varphi_1),
$$
where
$
R_b(\varphi_1)
=
\gamma_b'(\varphi_1)A(\varphi_1)
+
\gamma_b(\varphi_1)A'(\varphi_1)
-
\frac{m}{\lambda}
\frac{\gamma_b'(\varphi_1)\cos\varphi_1}
     {E(\varphi_1)}.
$
Furthermore (\footnote{Here $\|R_b\|_\infty$ denotes the usual sup-norm $
\|R_b\|_\infty
= \dpt
\sup_{\varphi_1\in \EU^1}|R_b(\varphi_1)|.
$}):
\[
\|R_b\|_\infty
\leq
C
\left(
1+
\dpt \frac{m^2}{\lambda\,\inf_{\varphi_1\in\mathbb S^1}E_b(\varphi_1)}
\right),
\]
where \(C>0\) is independent of \(\varphi_1\).
In particular, since \(b\) belongs to a compact   interval
$B\subset(mr_\varepsilon,1) $
for which
\[
E(\varphi_1)\geq \eta>0,
\qquad
\forall \varphi_1\in\mathbb \EU^1,
\quad
\forall b\in B,
\]
then
$
\|R_b\|_\infty=\mathcal O(1),
$
uniformly for \(b\in B\).  
\end{lemma}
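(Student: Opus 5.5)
The plan is to differentiate \eqref{f_b def} term by term and then sort the resulting terms into the leading part and the remainder $R_b$. Since $\gamma_b\in C^3(\mathbb S^1)$ by Lemma~\ref{thm:invariant_graph} and $E_b\ge\eta>0$ on $B$, the map $f_b$ is $C^3$ and the chain rule applies without any singularity. The product term gives
\[
\frac{d}{d\varphi_1}\bigl(\gamma_b A\bigr)=\gamma_b' A+\gamma_b A' .
\]
For the logarithmic term we use $E'(\varphi_1)=m\gamma_b'(\varphi_1)\cos\varphi_1-m\gamma_b(\varphi_1)\sin\varphi_1$, so that
\[
-\frac{1}{\lambda}\frac{E'}{E}=\frac{m}{\lambda}\frac{\gamma_b\sin\varphi_1}{E}-\frac{m}{\lambda}\frac{\gamma_b'\cos\varphi_1}{E}.
\]
The first piece is the announced leading term. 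The remaining three pieces are exactly the expression for $R_b$ in the statement. This establishes the identity. Note that the reduction mod $2\pi$ does not affect derivatives.

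For the bound on $\|R_b\|_\infty$ we estimate the three pieces separately.

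\begin{enumerate}
\item Since $|a_{ij}|<1$ by \textbf{(P4)}, we have $|A|,|A'|\le 2$.
\item Since $\gamma_b$ takes values in the compact interval $I$, $|\gamma_b A'|\le 2r_{\max}$, which is $\mathcal O(1)$.
\item By Lemma~\ref{Lemma Aux1}(2), $\gamma_b'=\mathcal O(m)$. Hence $|\gamma_b'A|=\mathcal O(m)$, which is absorbed into the constant $C$.
\item The last piece satisfies $\frac{m}{\lambda}\frac{|\gamma_b'|}{E}\le \frac{m}{\lambda}\frac{Cm}{\inf E_b}=C\frac{m^2}{\lambda\inf E_b}$.
\end{enumerate}

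Adding these gives $\|R_b\|_\infty\le C\bigl(1+\frac{m^2}{\lambda\inf E_b}\bigr)$, with $C$ depending only on $r_{\max}$, the bounds on $a_{ij}$, and the implicit constant in Lemma~\ref{Lemma Aux1}.

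For the final assertion, we use $E\ge\eta$ on $B$ together with $m\ll\lambda$, which gives $m^2/\lambda\le \eta_0 m\ll1$. The second term is therefore bounded, and $\|R_b\|_\infty=\mathcal O(1)$ uniformly in $b\in B$.

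The only delicate point is uniformity. The constant in $\gamma_b'=\mathcal O(m)$ from Lemma~\ref{Lemma Aux1} must be independent of $b\in B$, and this is guaranteed there. It must also not blow up as $\lambda\to0$. I would check this by revisiting the contraction factor $\kappa=Cm/\lambda$ in the graph-transform argument, which stays bounded away from $1$ under $m\ll\lambda$. Everything else is routine differentiation.
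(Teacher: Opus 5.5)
Your proposal is correct and follows the paper's proof essentially line for line. You differentiate \eqref{f_b def} and use $E'=m\gamma_b'\cos\varphi_1-m\gamma_b\sin\varphi_1$ to isolate the leading term. You then bound $R_b$ term by term, using the boundedness of $A$, $A'$, $\gamma_b$ and $\gamma_b'=\mathcal O(m)$ from Lemma~\ref{Lemma Aux1}. Finally you conclude from $E\ge\eta$ and $m\ll\lambda$, exactly as the paper does.

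Your closing aside on uniformity in $\lambda$ goes beyond what the statement asks, since it only requires $C$ independent of $\varphi_1$. On that point you and the paper are on the same footing: both simply import the constants of Lemma~\ref{Lemma Aux1}.
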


For each fixed \(b>mr_\varepsilon\), the
same estimate holds uniformly in \(\varphi_1\), although the implied
constant may depend on the distance of \(b\) to the singular threshold
\((mr_\varepsilon)^+\).

\begin{proof}

By definition, one has $f_b(\varphi_1)=
\gamma_b(\varphi_1)A(\varphi_1)
-
\frac{1}{\lambda}\log E(\varphi_1),$ 
where
\[
E(\varphi_1)
=
b+m\gamma_b(\varphi_1)\cos\varphi_1 .
\]
Differentiating with respect to \(\varphi_1\), we obtain
\[
f_b'(\varphi_1)
=
\gamma_b'(\varphi_1)A(\varphi_1)
+
\gamma_b(\varphi_1)A'(\varphi_1)
-
\frac1\lambda
\frac{E'(\varphi_1)}{E(\varphi_1)}.
\]
Since $E'(\varphi_1) = m\gamma_b'(\varphi_1)\cos\varphi_1 - m\gamma_b(\varphi_1)\sin\varphi_1,$ it follows that
\[
\begin{aligned}
f_b'(\varphi_1)
&=
\gamma_b'(\varphi_1)A(\varphi_1)
+
\gamma_b(\varphi_1)A'(\varphi_1)
\\
&\quad
-
\frac1\lambda
\frac{
m\gamma_b'(\varphi_1)\cos\varphi_1
-
m\gamma_b(\varphi_1)\sin\varphi_1
}
{E(\varphi_1)}
\\
&=
\frac{m}{\lambda}
\frac{\gamma_b(\varphi_1)\sin\varphi_1}
     {E(\varphi_1)}
+
R_b(\varphi_1),
\end{aligned}
\]
where
\[
R_b(\varphi_1)
=
\gamma_b'(\varphi_1)A(\varphi_1)
+
\gamma_b(\varphi_1)A'(\varphi_1)
-
\frac{m}{\lambda}
\frac{\gamma_b'(\varphi_1)\cos\varphi_1}
     {E(\varphi_1)}.
\]

We now estimate the remainder. By Lemmas~\ref{thm:invariant_graph}
and~\ref{Lemma Aux1}, one has:
$
\gamma_b(\varphi_1)=\mathcal O(1) $ and $\gamma_b'(\varphi_1)=\mathcal O(m),$
uniformly in \(\varphi_1\). Since \(A\) and \(A'\) are smooth functions on \(\mathbb S^1\), they are uniformly bounded. Therefore,
\[
\gamma_b'(\varphi_1)A(\varphi_1)
+
\gamma_b(\varphi_1)A'(\varphi_1)
=
\mathcal O(1).
\]

For the last term in \(R_b\),
\[
\left|
\frac{m}{\lambda}
\frac{\gamma_b'(\varphi_1)\cos\varphi_1}
     {E(\varphi_1)}
\right|
\le
C
\frac{m}{\lambda}
\frac{m}{E(\varphi_1)}
=
C
\frac{m^2}
     {\lambda\,E(\varphi_1)}.
\]

Hence,
\[
\|R_b\|_\infty
\le
C
\left(
1+
\frac{m^2}
     {\lambda\,\inf_{\varphi_1\in\mathbb S^1}E(\varphi_1)}
\right).
\]
Since \(b\in B\subset (mr_\varepsilon,1)\) and
\[
E(\varphi_1)\ge \eta>0,
\qquad
\text{for all}\, \,  (\varphi_1,b)\in\mathbb S^1\times B,
\]
then
\[
\frac{m^2}
     {\lambda\,\inf_{\varphi_1}E(\varphi_1)}
\le
\frac{m^2}{\lambda\eta}.
\]

Since \(m\ll\lambda\), one has \(m^2/\lambda=o(1)\). Thus
$\|R_b\|_\infty=\mathcal O(1)$ uniformly for \(b\in B\).
For each fixed parameter value \(b>mr_\varepsilon\), the quantity
$
\inf_{\varphi_1\in\mathbb S^1}E(\varphi_1)
$
is strictly positive, and therefore the same estimate holds uniformly in \(\varphi_1\).
This completes the proof.

\end{proof}

\begin{remark}
\label{rem:remainder_not_uniform_singular}

The estimate \(\|R_b\|_\infty=\mathcal O(1)\) is uniform only on
compact parameter intervals separated from the singular threshold.
If \(b\to(mr_\varepsilon)^+\), then
$\dpt \inf_{\varphi\in\mathbb S^1}E_b(\varphi)
$ may tend to zero, and the above bound must be used in its explicit
form
\[
\|R_b\|_\infty
\leq
C
\left(
1+
\frac{m^2}{\lambda\,\inf E}
\right).
\]
 
\end{remark}

Lemma \ref{Lemma Aux2} shows that the only contribution
exhibiting the singular scaling
$
\lambda^{-1}E(\varphi_1)^{-1}
$
arises from the logarithmic term. In particular,
although $m/\lambda$ is small, the quantity
\[
\frac{m}{\lambda}
\frac{\gamma_b(\varphi_1)\sin\varphi_1}
     {b+m\gamma_b(\varphi_1)\cos\varphi_1}
\]
can become arbitrarily large as $b\to (mr_\varepsilon)^+$.
In this regime,
$
E(\varphi_1)
=
b+m\gamma_b(\varphi_1)\cos\varphi_1
$
may become arbitrarily small near
\(\varphi_1\approx\pi\).
Consequently,
$
\frac{1}{\lambda}
\frac{E'(\varphi_1)}
     {E(\varphi_1)}
$
becomes arbitrarily large in absolute value.
This produces the sharp peaks generated by the
logarithmic singularity  (near $0$ and $\pi$ as depicted in Figure \ref{Imagem 4}) and identifies the
leading mechanism responsible for the strong
expansion analysed in the subsequent sections.
 
 In particular, outside a sufficiently small
neighbourhood of the critical points, the quantity
$|\sin\varphi_1|$ is bounded away from zero, while
the denominator $E(\varphi_1)$ remains strictly
positive. Hence the dominant term
$
\frac{m}{\lambda}
\frac{\gamma_b(\varphi_1)\sin\varphi_1}
     {E(\varphi_1)}
$
retains a definite sign and magnitude, providing
the quantitative source of the uniform expansion
established later in Lemma~\ref{lem:uniform_expansion}.
 
 \begin{remark}
 Throughout the singular regime under consideration,
\(b\gtrsim mr_\varepsilon\) implies that \(b\) remains
comparable with \(m\); equivalently, there exist
constants \(C_1,C_2>0\) independent of \(m\) such that
\[
C_1m\le b\le C_2m.
\]
 \end{remark}

\begin{figure}[ht]
\begin{center}
 \includegraphics[height=8.5cm]{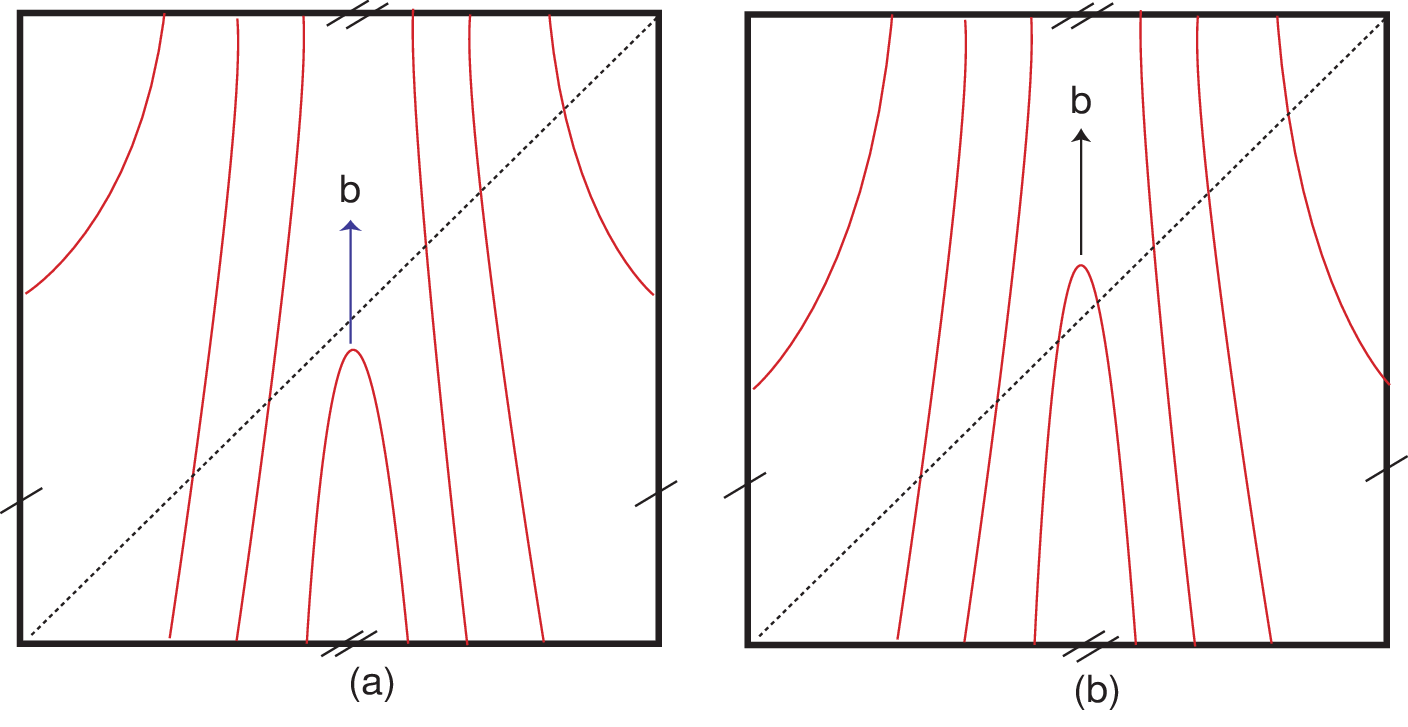}
\end{center}
\caption{\small  Sketch of the graph of $f_b: \EU^1\to \EU ^1$  for different values of $b>mr_\varepsilon$.  The bars identify the sides of the square. The critical points vary according to $b$. }
 \label{Imagem 4}
\end{figure}

\begin{lemma}[Non-degenerate critical points]
\label{lem:nondegeneracy}

Suppose that
$m\ll\lambda\ll1$ and $ b \gtrsim m r_\varepsilon$
Let \(\widehat\varphi_1\) be a critical point of the reduced map
\(f_b\). Then \(\widehat\varphi_1\) is non-degenerate.  
\end{lemma}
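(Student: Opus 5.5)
The plan is to take the derivative formula of Lemma~\ref{Lemma Aux2} and differentiate it once more. We then evaluate $f_b''$ at a critical point, using $f_b'(\widehat\varphi_1)=0$ to eliminate terms.

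Write $f_b' = P + R_b$, with
\[
P(\varphi_1)=\frac{m}{\lambda}\,\frac{\gamma_b(\varphi_1)\sin\varphi_1}{E(\varphi_1)}.
\]
Then
\[
f_b''=\frac{m}{\lambda}\Big(\frac{\gamma_b'\sin\varphi_1+\gamma_b\cos\varphi_1}{E}-\frac{\gamma_b\sin\varphi_1\,E'}{E^2}\Big)+R_b'.
\]
At a critical point, $P(\widehat\varphi_1)=-R_b(\widehat\varphi_1)$. This lets us replace the term $\frac{m}{\lambda}\frac{\gamma_b\sin\varphi_1}{E}\cdot\frac{E'}{E}$ by $-R_b\,E'/E$. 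Using $E'=m\gamma_b'\cos\varphi_1-m\gamma_b\sin\varphi_1$, this gives
\[
f_b''(\widehat\varphi_1)=\frac{m}{\lambda}\frac{\gamma_b\cos\widehat\varphi_1}{E}+R_b\frac{E'}{E}+R_b'+\mathcal O\Big(\frac{m^2}{\lambda E}\Big).
\]
Here Lemma~\ref{Lemma Aux1} gives $\gamma_b',\gamma_b''=\mathcal O(m)$.

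Next, locate the critical points. Since $R_b=\mathcal O(1)$ while $P$ carries the large factor $\frac{m}{\lambda E}$, the equation $f_b'=0$ forces
\[
\sin\widehat\varphi_1=-\frac{\lambda E R_b}{m\gamma_b}.
\]
In the singular regime $b\gtrsim mr_\varepsilon$ (with $C_1m\le b\le C_2 m$), $E$ near $\pi$ is comparable to $b-m\gamma_b$, which is small. So near $\pi$ the quantity $\lambda E/m$ is small, and $|\sin\widehat\varphi_1|$ is small. Hence $\widehat\varphi_1$ lies close to $0$ or $\pi$, matching Figure~\ref{Imagem 4}, and there $|\cos\widehat\varphi_1|\approx1$.

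It remains to show the dominant term wins. The leading term $\frac{m}{\lambda}\frac{\gamma_b|\cos\widehat\varphi_1|}{E}$ has size of order $\frac{m}{\lambda E}$. The correction $R_bE'/E$ is $\mathcal O\big(\frac{m}{E}\big)$ (because $E'=\mathcal O(m)$), which is smaller by a factor $\lambda\ll1$. The term $R_b'$ requires a bound: differentiating the expression for $R_b$ and using Lemma~\ref{Lemma Aux1} together with the $C^3$ control of Lemma~\ref{thm:invariant_graph} gives
\[
R_b'=\mathcal O\Big(1+\frac{m^2}{\lambda E}+\frac{m^3}{\lambda E^2}\Big).
\]
Each piece is $o\big(\frac{m}{\lambda E}\big)$ provided $m\ll\lambda$ and $m^2/E\ll m$. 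The latter needs $E\gg m$, or at least $E$ not much smaller than $m$ times a small constant. Under these conditions $|f_b''(\widehat\varphi_1)|\ge \frac{m}{2\lambda}\frac{\gamma_b}{E}>0$, which is the required non-degeneracy.

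\textbf{Main obstacle.} The hard step is the uniformity of these comparisons near $\pi$ as $b\to(mr_\varepsilon)^+$, where $E$ can become tiny. There the term $\frac{m^3}{\lambda E^2}$ from $R_b'$ can compete with $\frac{m}{\lambda E}$ unless $E\gtrsim m$. I would first check whether the critical point near $\pi$ still satisfies $E(\widehat\varphi_1)\ge c\,m$, using the relation $\sin\widehat\varphi_1=\mathcal O(\lambda E/m)$ and $E\approx b-m\gamma_b\cos(\pi-\cdot)$. If that fails, I would not bound $R_b'$ crudely. Instead I would use the exact cancellation: the troublesome $\gamma_b'$-terms carry an extra factor $m$ relative to the main term, and they combine as $\frac{m}{\lambda}\frac{\gamma_b'\cos\varphi_1}{E}\cdot\frac{E'}{E}$. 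Since $E'$ itself is $\mathcal O(m|\sin\widehat\varphi_1|+m^2)$, which is small at the critical point, this combination should remain subordinate to the leading term.
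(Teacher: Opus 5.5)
Your overall plan matches the paper's: evaluate $f_b''$ at a critical point, use $f_b'(\widehat\varphi_1)=0$ to neutralise the squared log-derivative, localise $\widehat\varphi_1$ near $0$ or $\pi$, and let $\frac{m}{\lambda}\frac{\gamma_b\cos\widehat\varphi_1}{E}$ dominate via $E\le Cm$. As written, however, it does not close in the regime the lemma is about. For the critical point near $\pi$, $E(\widehat\varphi_1)\approx b-m\gamma_b\to0^+$ as $b\to(mr_\varepsilon)^+$, and your correction estimates are not uniform in $E$.

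You use $R_b=\mathcal O(1)$, whereas Lemma~\ref{Lemma Aux2} only gives $R_b=\mathcal O\bigl(1+\frac{m^2}{\lambda E}\bigr)$, and you bound $E'$ crudely by $\mathcal O(m)$. This leaves terms like $\frac{m^3}{\lambda E^2}$ whose ratio to the leading term is $m^2/E$. So the condition that term needs is $E\gg m^2$. Your stated $E\gg m$ is incompatible with $E\le Cm$, and even $E\ge cm$ fails near $\pi$ in the singular limit.

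Your fallback bound $E'=\mathcal O(m|\sin\widehat\varphi_1|+m^2)$ does not cure this. Its $m^2$ part still produces corrections of relative size $m^3/E$, both in $R_bE'/E$ and in the piece $\frac{m}{\lambda}\frac{\gamma_b'\cos\widehat\varphi_1}{E}\frac{E'}{E}$ of $R_b'$. The reason is that splitting $E'$ into its two pieces discards the cancellation between $m\gamma_b'\cos\widehat\varphi_1$ and $m\gamma_b\sin\widehat\varphi_1$ that holds at a critical point.

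The missing idea is to read the critical-point equation in its exact form. Since $f_b'=(\gamma_bA)'-\frac{1}{\lambda}\frac{E'}{E}$, every critical point satisfies
\[
\frac{E'(\widehat\varphi_1)}{E(\widehat\varphi_1)}=\lambda\,(\gamma_bA)'(\widehat\varphi_1)=\mathcal O(\lambda),
\]
with no loss in $E$; this is the identity the paper uses. Inserting it into your formula gives $R_b\frac{E'}{E}=\lambda(\gamma_bA)'R_b=\mathcal O(\lambda)+\mathcal O(m^2/E)$, and it makes the dangerous piece of $R_b'$ equal to $\mathcal O(m^2/E)$. Together with $(\gamma_bA)''=\mathcal O(1)$ and the $\mathcal O(m^2/(\lambda E))$ terms, every correction is then $o\bigl(\frac{m}{\lambda E}\bigr)$ uniformly for $0<E\le Cm$.

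The paper reaches the same point more directly, without the $P+R_b$ split. It writes $f_b''=(\gamma_bA)''-\frac{E''}{\lambda E}+\frac{1}{\lambda}\bigl(\frac{E'}{E}\bigr)^2$, where the last term is $\mathcal O(\lambda)$ by the identity and $E''(\widehat\varphi_1)=-m\gamma_b\cos\widehat\varphi_1+\mathcal O(m^2)$.

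The same identity also repairs your localisation step, which leaned on the non-uniform bound $R_b=\mathcal O(1)$ and on $E$ being small ``near $\pi$'' (somewhat circular). From $E'=m\gamma_b'\cos\varphi_1-m\gamma_b\sin\varphi_1$ one gets $\sin\widehat\varphi_1=\mathcal O(m)+\mathcal O(\lambda E/m)=\mathcal O(\lambda)$ at every critical point, using only $E\le Cm$.
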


\begin{proof}

Recall that
$
f_b(\varphi_1)
=
\gamma_b(\varphi_1)A(\varphi_1)
-
\frac1\lambda
\log E(\varphi_1),
$
where
$
A(\varphi_1)=a_{11}\cos\varphi_1+a_{12}\sin\varphi_1
$
and
$
E(\varphi_1)
=
b+m\gamma_b(\varphi_1)\cos\varphi_1.
$
Throughout the proof, all estimates are evaluated in the domain of the
return map, so that
$
E(\varphi_1)>0.
$
Differentiating $f_b$ once  gives
\[
f_b'(\varphi_1)
=
(\gamma_b A)'(\varphi_1)
-
\frac1\lambda
\frac{E'(\varphi_1)}{E(\varphi_1)}. 
\]
Hence, at a critical point \(\widehat\varphi_1\), we have:
\[
0=f_b'(\widehat\varphi_1)
=
(\gamma_b A)'(\widehat\varphi_1)
-
\frac1\lambda
\frac{E'(\widehat\varphi_1)}
     {E(\widehat\varphi_1)}.
\]
Therefore
\[
E'(\widehat\varphi_1)
=
\lambda E(\widehat\varphi_1)
(\gamma_b A)'(\widehat\varphi_1). 
\]
Since \(A\) and \(A'\) are uniformly bounded and, by Lemmas~\ref{thm:invariant_graph}
and~\ref{Lemma Aux1}, one has: $
\gamma_b=\mathcal O(1)$ and $\gamma_b'=\mathcal O(m)$, we have
$(\gamma_b A)'=\mathcal O(1).$
Consequently,
\[
E'(\widehat\varphi_1)
=
\mathcal O(\lambda E(\widehat\varphi_1)).
\tag{9.3}
\label{eq:Eprime-critical}
\]
On the other hand,
\[
E'(\varphi_1)
=
m\gamma_b'(\varphi_1)\cos\varphi_1
-
m\gamma_b(\varphi_1)\sin\varphi_1.
\]
Evaluating at \(\widehat\varphi_1\) and using
\eqref{eq:Eprime-critical}, we obtain
\[
m\gamma_b(\widehat\varphi_1)\sin\widehat\varphi_1
=
m\gamma_b'(\widehat\varphi_1)\cos\widehat\varphi_1
+
\mathcal O(\lambda E(\widehat\varphi_1)).
\]
Since \(\gamma_b\) is bounded away from zero on the trapping annulus,
\[
\gamma_b(\widehat\varphi_1)\ge r_{\min}>0,
\]
and since \(\gamma_b'=\mathcal O(m)\), it follows that
\[
\sin\widehat\varphi_1
=
\mathcal O(m)
+
\mathcal O\!\left(
\frac{\lambda E(\widehat\varphi_1)}{m}
\right).
\]
Using $b\gtrsim mr_\varepsilon$ and the boundedness of \(\gamma_b\), we have
\[
0<E(\widehat\varphi_1)
=
b+m\gamma_b(\widehat\varphi_1)\cos\widehat\varphi_1
\le C m
\]
for some constant \(C>0\). Hence
\[
\sin\widehat\varphi_1
=
\mathcal O(m)+\mathcal O(\lambda).
\]
Since \(m\ll\lambda\), this gives $
\sin\widehat\varphi_1=\mathcal O(\lambda).$
Thus every critical point lies in an \(\mathcal{O}(\lambda)\)-neighbourhood of
either \(0\) or \(\pi\). In particular,
$
|\cos\widehat\varphi_1|
\ge
\frac12
$
for all sufficiently small \(\lambda\).
We now estimate the second derivative. Differentiating twice,
\[
f_b''(\varphi_1)
=
(\gamma_b A)''(\varphi_1)
-
\frac1\lambda
\left(
\frac{E''(\varphi_1)}{E(\varphi_1)}
-
\frac{(E'(\varphi_1))^2}{E(\varphi_1)^2}
\right).
\]
Equivalently,
\[
f_b''(\varphi_1)
=
(\gamma_b A)''(\varphi_1)
-
\frac{E''(\varphi_1)}{\lambda E(\varphi_1)}
+
\frac{1}{\lambda}
\left(
\frac{E'(\varphi_1)}{E(\varphi_1)}
\right)^2.
\tag{9.4}
\label{eq:fb-second}
\]
By Lemma~\ref{Lemma Aux1}, one has
$\gamma_b'=\mathcal O(m),$ and $\gamma_b''=\mathcal O(m),$
and therefore
\[
(\gamma_b A)''=\mathcal O(1).
\tag{9.5}
\label{eq:gammaA-bounded}
\]
Moreover,
\[
E''(\varphi_1)
=
m\gamma_b''(\varphi_1)\cos\varphi_1
-
2m\gamma_b'(\varphi_1)\sin\varphi_1
-
m\gamma_b(\varphi_1)\cos\varphi_1.
\]
Evaluating at \(\widehat\varphi_1\), and using
\[
\gamma_b'=\mathcal O(m),
\qquad
\gamma_b''=\mathcal O(m),
\qquad
\sin\widehat\varphi_1=\mathcal O(\lambda),
\]
we get
\[
E''(\widehat\varphi_1)
=
-
m\gamma_b(\widehat\varphi_1)\cos\widehat\varphi_1
+
\mathcal O(m^2).
\tag{9.6}
\label{eq:Esecond-critical}
\]
Since \(\gamma_b(\widehat\varphi_1)\ge r_{\min}>0\) and
$|\cos\widehat\varphi_1|\ge \frac12,$
we may choose \(m>0\) sufficiently small so that
\[
|E''(\widehat\varphi_1)|
\ge
c_0 m
\]
for some \(c_0>0\) independent of \(\lambda\).
Next, by \eqref{eq:Eprime-critical}, we may write $
\frac{E'(\widehat\varphi_1)}
     {E(\widehat\varphi_1)}
=
\mathcal O(\lambda).
$
Therefore
\[
\frac1\lambda
\left(
\frac{E'(\widehat\varphi_1)}
     {E(\widehat\varphi_1)}
\right)^2
=
\mathcal O(\lambda).
\tag{9.7}
\label{eq:Eprime-square}
\]
Substituting \eqref{eq:gammaA-bounded},
\eqref{eq:Esecond-critical}, and \eqref{eq:Eprime-square} into
\eqref{eq:fb-second}, we obtain
\[
f_b''(\widehat\varphi_1)
=
\frac{
m\gamma_b(\widehat\varphi_1)\cos\widehat\varphi_1
}
{
\lambda E(\widehat\varphi_1)
}
+
\mathcal O\!\left(
\frac{m^2}{\lambda E(\widehat\varphi_1)}
\right)
+
\mathcal O(1).
\]
Equivalently,
\[
f_b''(\widehat\varphi_1)
=
\frac{
m
\left[
\gamma_b(\widehat\varphi_1)\cos\widehat\varphi_1
+\mathcal O(m)
\right]
}
{
\lambda E(\widehat\varphi_1)
}
+
\mathcal O(1).
\]
Since
$
|\gamma_b(\widehat\varphi_1)\cos\widehat\varphi_1|
\ge
\frac{r_{\min}}{2},
$
the term in square brackets is bounded away from zero for \(m\) small.
Thus there exists \(c_1>0\) such that
\[
\left|
\frac{
m
\left[
\gamma_b(\widehat\varphi_1)\cos\widehat\varphi_1
+\mathcal O(m)
\right]
}
{
\lambda E(\widehat\varphi_1)
}
\right|
\ge
\frac{c_1m}{\lambda E(\widehat\varphi_1)}.
\]
Using again
$
0<E(\widehat\varphi_1)\le C m,
$
we obtain
\[
\frac{m}{\lambda E(\widehat\varphi_1)}
\ge
\frac{1}{C\lambda}.
\]
Therefore
\[
|f_b''(\widehat\varphi_1)|
\ge
\frac{c_2}{\lambda}
-
\mathcal O(1).
\]
Since \(\lambda\ll1\), the singular term dominates the bounded
remainder. Hence, for all sufficiently small \(\lambda\),
$
|f_b''(\widehat\varphi_1)|
\ge
\frac{c}{\lambda}
$
for some constant \(c>0\), independent of \(\lambda\).
In particular,
$
f_b''(\widehat\varphi_1)\neq0.
$
Thus every critical point of \(f_b\) is non-degenerate.
\end{proof}

\begin{remark}
\label{rem:nondegeneracy_does_not_count_critical_points}

Lemma~\ref{lem:nondegeneracy} is conditional on the existence of a
critical point. It proves that any such critical point is non-degenerate
and must lie near \(0\) or \(\pi\). It does not, by itself, prove that
\(f_b\) has exactly two critical points.  
\end{remark}

\begin{remark}
\label{rem:uniform_expansion_scale_separation}

The assumption
$
E(\varphi_1)\ge c_E m,
\forall \varphi_1\notin\mathcal C_{\delta_0},
$
is needed to keep the remainder \(R_b\) of Lemma~\ref{Lemma Aux2}
uniformly bounded outside the critical neighbourhood. Near the
singular threshold \(b\to(mr_\varepsilon)^+\), such a lower bound may
fail only close to the singular angular region, which is contained in
the critical neighbourhood. Thus the expansion estimate is uniform on
the complement of \(\mathcal C_{\delta_0}\), but it should not be
interpreted as a uniform estimate on the whole circle.
\end{remark}

\begin{lemma}[\textbf{Uniform expansion outside the critical neighbourhood}]
\label{lem:uniform_expansion}
If $m\ll \lambda\ll 1$ and $b \gtrsim m r_\varepsilon$, then there exist $\delta_0 > 0$ and $\sigma_0 > 0$ such that for all $\varphi_1 \notin C_{\delta_0}$, the following inequality holds:
\[
|f_b'(\varphi_1)| \ge e^{\sigma_0} > 1.
\]
\end{lemma}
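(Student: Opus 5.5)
The plan is to use the decomposition of Lemma~\ref{Lemma Aux2},
\[
f_b'(\varphi_1)=\frac{m}{\lambda}\frac{\gamma_b(\varphi_1)\sin\varphi_1}{E(\varphi_1)}+R_b(\varphi_1),
\]
and show that, away from the critical set, the first (singular) term dominates the remainder by a definite margin. The critical neighbourhood $C_{\delta_0}$ will be chosen as a union of arcs around $0$ and $\pi$. This is consistent with Lemma~\ref{lem:nondegeneracy}, which places every critical point within $\mathcal O(\lambda)$ of $0$ or $\pi$. I would take $\delta_0$ as a fixed small number, independent of $m$ and $\lambda$, so that $C_{\mathcal O(\lambda)}$-arcs are contained in $C_{\delta_0}$. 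Then outside $C_{\delta_0}$ we have $|\sin\varphi_1|\ge \sin\delta_0=:s_0>0$.

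\textbf{Step 1 (size of the dominant term).} For $\varphi_1\notin C_{\delta_0}$, bound $E$ from above. In the regime $b\gtrsim mr_\varepsilon$, the Remark gives $C_1 m\le b\le C_2 m$, and with $\gamma_b=\mathcal O(1)$ this yields $0<E(\varphi_1)\le b+m\gamma_b\le C m$. Together with $\gamma_b\ge r_{\min}>0$ this gives
\[
\left|\frac{m}{\lambda}\frac{\gamma_b\sin\varphi_1}{E}\right|\ \ge\ \frac{m\, r_{\min}s_0}{\lambda\, Cm}=\frac{c_3}{\lambda}.
\]

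\textbf{Step 2 (control of the remainder).} Use the explicit bound $\|R_b\|_\infty\le C(1+m^2/(\lambda\inf E))$ from Lemma~\ref{Lemma Aux2}, in the localized form of Remark~\ref{rem:uniform_expansion_scale_separation}. Outside $C_{\delta_0}$ one needs a lower bound $E\ge c_E m$. Near $\varphi_1=\pi$ the quantity $E=b-m\gamma_b+\mathcal O(m\delta_0^2)$ may be small only in the singular angular region. Away from $\pi$ by $\delta_0$, however, $E\ge b-m\gamma_b\cos\delta_0\ge m(r_\varepsilon-\gamma_b\cos\delta_0)+\dots$. Since $\gamma_b=r^*(b)+\mathcal O(m)$ and $r^*(b)$ is close to $r_\varepsilon$ as $b\to (mr_\varepsilon)^+$ (we work on the annulus $[r_\varepsilon,r_\varepsilon+\delta]$), this gives $E\ge c_E m$ with $c_E\sim r_\varepsilon(1-\cos\delta_0)/2$, after shrinking the radial width of $D$ relative to $\delta_0$. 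Then the pointwise remainder satisfies $|R_b|\le C(1+m/(\lambda c_E))=\mathcal O(1)$, because $m\ll\lambda$.

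\textbf{Step 3 (conclusion).} Combining the two steps, $|f_b'(\varphi_1)|\ge c_3/\lambda-C'$ for $\varphi_1\notin C_{\delta_0}$. Choosing $\lambda_0$ small gives $c_3/\lambda-C'\ge 2$, and we set $\sigma_0=\ln 2$ (any $\sigma_0$ with $e^{\sigma_0}\le c_3/\lambda-C'$ works).

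I expect the main obstacle to be Step 2, namely the uniform lower bound $E\ge c_Em$ outside $C_{\delta_0}$. This bound requires $\gamma_b$ to be pinned near $r_\varepsilon$ while $b$ approaches $mr_\varepsilon$, and it must hold consistently with the choice of $\delta_0$, which is made independently of $m$ and $\lambda$. I would first try to get it from $\|\gamma_b-r^*(b)\|_{C^0}=\mathcal O(m)$ together with continuity of $r^*$ at $b\approx mr_\varepsilon$. If that fails, I would simply include the singular arc where $E<c_Em$ inside $C_{\delta_0}$, as Remark~\ref{rem:uniform_expansion_scale_separation} allows.
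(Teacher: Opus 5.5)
Your proposal is correct and follows the paper's own route. Both arguments use the decomposition of Lemma~\ref{Lemma Aux2} and fix $\delta_0$ independently of $\lambda$ via the $\mathcal O(\lambda)$-localisation of critical points from Lemma~\ref{lem:nondegeneracy}; both then combine the bound $E\le Cm$, which makes the singular term of order $1/\lambda$, with an $\mathcal O(1)$ remainder.

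The one difference is that you justify the remainder bound through $E\ge c_E m$ off the critical neighbourhood, whereas the paper simply invokes Lemma~\ref{Lemma Aux2} (uniform only away from the singular threshold) and the assumption in Remark~\ref{rem:uniform_expansion_scale_separation}, so your version is somewhat more careful. One small correction: critical points sit $\mathcal O(\lambda)$ away from $0$ and $\pi$, so on the complement of the true $C_{\delta_0}$ you only get $|\sin\varphi_1|\ge\sin(\delta_0/2)$, as the paper uses, not $\sin\delta_0$.
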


\begin{proof}

By the chain rule, one has:
\[
f_b'(\varphi_1)
=
\partial_{\varphi_1}\mathcal F_b(\gamma_b(\varphi_1),\varphi_1)
+
\partial_{r_1}\mathcal F_b(\gamma_b(\varphi_1),\varphi_1)
\,\gamma_b'(\varphi_1).
\]
and by Lemma~\ref{Lemma Aux1}, one may write: $\gamma_b'(\varphi_1)=\mathcal{O}(m).$
Moreover,
\[
\partial_{r_1}\mathcal F_b(\gamma_b(\varphi_1),\varphi_1)
=
\mathcal{O}(\lambda^{-1}),
\]
and therefore
\[
\partial_{r_1}\mathcal F_b(\gamma_b(\varphi_1),\varphi_1)
\,\gamma_b'(\varphi_1)
=
\mathcal{O}\!\left(\frac{m^2}{\lambda}\right).
\]
Using Lemma~ \ref{Lemma Aux2}, we obtain
\[
f_b'(\varphi_1)
=
\frac{m}{\lambda}
\frac{\gamma_b(\varphi_1)\sin\varphi_1}
     {E(\varphi_1)}
+
\mathcal{O}(1)
+
\mathcal{O}\!\left(\frac{m^2}{\lambda}\right),
\]
where
\[
E(\varphi_1)
=
b+m\gamma_b(\varphi_1)\cos\varphi_1.
\]
By Lemma \ref{lem:nondegeneracy}
every critical point of \(f_b\) belongs to an
\(\mathcal{O}(\lambda)\)-neighbourhood of either \(0\) or \(\pi\).
Fix \(\delta_0>0\) independent of \(\lambda\).
For \(\lambda>0\) sufficiently small, it follows that
\[
\varphi_1\notin C_{\delta_0}
\quad\Longrightarrow\quad
d(\varphi_1,\{0,\pi\})
\ge
\frac{\delta_0}{2}.
\]
and hence
$
|\sin\varphi_1|
\ge
\sin\!\left(\frac{\delta_0}{2}\right)
=:c_\delta>0.
$
Since
\[
r_{\min}
\le
\gamma_b(\varphi_1)
\le
r_{\max},
\]
and \(b\gtrsim mr_\varepsilon\), one has
 \[
E(\varphi_1)
=
b+m\gamma_b(\varphi_1)\cos\varphi_1
\le
b+mr_{\max}
\le
C_E\,m
\]
for some constant \(C_E>0\) independent of
\(m\) and \(\lambda\). Consequently,
\[
\left|
\frac{m}{\lambda}
\frac{\gamma_b(\varphi_1)\sin\varphi_1}
     {E(\varphi_1)}
\right|
\ge
\frac{1}{\lambda}
\frac{r_{\min}c_\delta}{C_E}
=
\frac{K}{\lambda},
\]
where
$K:=\frac{r_{\min}c_\delta}{C_E}>0.$ Hence
\[
|f_b'(\varphi_1)|
\ge
\frac{K}{\lambda}
-
\mathcal{O}\!\left(\frac{m^2}{\lambda}\right)
-
\mathcal{O}(1).
\]

Since \(m\ll\lambda\ll1\), one has
\(m^2/\lambda=o(1)\), and therefore
$
|f_b'(\varphi_1)|
\ge
\frac{K}{2\lambda}
$
for all sufficiently small \(\lambda\). Since \(K/(2\lambda)\to+\infty\) as
\(\lambda\to0^+\), there exists
\(\sigma_0>0\) such that
\[
|f_b'(\varphi_1)|
\ge
e^{\sigma_0}
>
1
\]
for every
\(\varphi_1\notin C_{\delta_0}\).
This proves the lemma.

\end{proof}

 \begin{lemma}[Parameter transversality]
\label{lem:transversality}

Assume \(b\gtrsim mr_\varepsilon\).
Let \(\widehat\varphi_i(b)\) be a \(C^1\)-continuation
of a non-degenerate critical point of \(f_b\), and define
$\xi_i(b) = f_b\bigl(\widehat\varphi_i(b)\bigr).$
Then there exists a constant \(\tau>0\),
independent of \(m\) and \(\lambda\), such that
\[
\left|
\frac{d\xi_i(b)}{db}
\right|
\ge
\frac{\tau}{\lambda m}
\]
for all sufficiently small \(m\) and \(\lambda\).
In particular,
\[
\left|
\frac{d\xi_i(b)}{db}
\right|
\to +\infty
\qquad\text{as}\qquad
 \lambda m\to0.
\]

\end{lemma}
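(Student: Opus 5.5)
The plan is to differentiate $\xi_i(b)=f_b(\widehat\varphi_i(b))$ with the chain rule and use criticality to discard the term coming from the motion of the critical point:
\[
\frac{d\xi_i}{db}(b)=\partial_b f_b\bigl(\widehat\varphi_i(b)\bigr)+f_b'\bigl(\widehat\varphi_i(b)\bigr)\,\widehat\varphi_i'(b)=\partial_b f_b\bigl(\widehat\varphi_i(b)\bigr).
\]
The $C^1$-continuation $\widehat\varphi_i(b)$ exists by the implicit function theorem, since Lemma~\ref{lem:nondegeneracy} gives $f_b''(\widehat\varphi_i)\neq0$. No quantitative control on $\widehat\varphi_i'(b)$ is needed, because its coefficient vanishes identically. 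The whole estimate therefore reduces to a lower bound on $|\partial_b f_b|$ at the critical point.

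Next I compute $\partial_b f_b$ from \eqref{f_b def}:
\[
\partial_b f_b(\varphi_1)=\partial_b\gamma_b(\varphi_1)\,A(\varphi_1)-\frac{1}{\lambda}\,\frac{1+m\,\partial_b\gamma_b(\varphi_1)\cos\varphi_1}{E(\varphi_1)}.
\]
By Lemma~\ref{thm:invariant_graph}, $\|\partial_b\gamma_b\|_{C^0}\le C_\gamma$, so the first term is $\mathcal O(1)$. In the numerator of the second term, $1+m\,\partial_b\gamma_b\cos\varphi_1=1+\mathcal O(m)\ge\frac12$ for $m$ small. It remains to bound $E(\widehat\varphi_i)$ from above. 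In the proof of Lemma~\ref{lem:nondegeneracy} it was shown that $0<E(\widehat\varphi_i)\le Cm$, using $b\gtrsim mr_\varepsilon$ and the comparability $C_1m\le b\le C_2m$. This yields
\[
\left|\frac{d\xi_i}{db}\right|\ge\frac{1}{2C\lambda m}-\mathcal O(1)\ge\frac{\tau}{\lambda m}
\]
for, say, $\tau=1/(4C)$, once $\lambda m$ is small enough that the $\mathcal O(1)$ term is absorbed. Since $C$ and $C_\gamma$ do not depend on $m$ or $\lambda$, neither does $\tau$. The divergence as $\lambda m\to0$ then follows immediately.

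The main obstacle is uniformity. The bound $\|\partial_b\gamma_b\|_{C^0}\le C_\gamma$ was proved on compact parameter intervals $B$ with $E\ge\eta$, whereas here $E(\widehat\varphi_i)$ is of order $m$. I would handle this by working on a parameter interval $B$ where $E\ge\eta$ and $\eta$ is comparable to $m$. I would then check that $C_\gamma$ remains bounded there. The parameter-derivative equation for $\partial_b\gamma_b$ has forcing $\partial_b\mathcal G_b=\mathcal O\bigl(\tfrac{\varepsilon}{\lambda}z_2^{2\varepsilon/\lambda-1}\bigr)$, which must be controlled, and its linear part has norm $\le\kappa<1$ by Lemma~\ref{lem:radial_contraction}. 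If this forcing turns out to be large, I would fall back on a weaker requirement: it is enough that $m\,\partial_b\gamma_b=o(1)$ and that $\partial_b\gamma_b\,A=o\bigl((\lambda m)^{-1}\bigr)$. Both would still preserve the dominant term $-1/(\lambda E)$.
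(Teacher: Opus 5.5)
Your proposal is correct and follows essentially the same route as the paper: criticality eliminates the $\widehat\varphi_i'(b)$ term, and $\partial_b f_b$ splits into the bounded piece $(\partial_b\gamma_b)A$ (controlled by $C_\gamma$) and the logarithmic piece, whose numerator is at least $\tfrac12$ and whose denominator satisfies $E_b(\widehat\varphi_i)\le (C_1+r_{\max})m$; this gives $|d\xi_i/db|\ge c_1/(\lambda m)-C_0$. The uniformity issue you flag is not treated in the paper either, which simply invokes $\|\partial_b\gamma_b\|_{C^0}\le C_\gamma$ from Lemma~\ref{thm:invariant_graph} and imposes $m\|\partial_b\gamma_b\|_{C^0}\le\frac12$ as an assumption; if you pursue your fallback, note that differentiating the invariance equation in $b$ gives the forcing $\partial_b\mathcal G_b-(\gamma_b'\circ f_b)\,\partial_b\mathcal F_b$, not just $\partial_b\mathcal G_b$, and its second term is of order $1/\lambda$ when $E\sim m$, which is still compatible with your weaker requirements.
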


\begin{proof}

For simplicity of notation, write
$
\widehat\varphi_i(b)=\widehat\varphi_1^{(i)}(b).
$
Since
$
\xi_i(b)=f_b(\widehat\varphi_i(b)),
$
the chain rule gives
$
\frac{d\xi_i}{db}
=
\partial_{\varphi_1}f_b(\widehat\varphi_i(b))
\frac{d\widehat\varphi_i}{db}
+
\partial_b f_b(\widehat\varphi_i(b)).
$
Because \(\widehat\varphi_i(b)\) is a critical point,
$
\partial_{\varphi_1}f_b(\widehat\varphi_i(b))=0.
$
Hence
$
\frac{d\xi_i}{db}
=
\partial_b f_b(\widehat\varphi_i(b)).
$
Recall that
\[
f_b(\varphi_1)
=
\gamma_b(\varphi_1)A(\varphi_1)
-
\frac1\lambda \log E_b(\varphi_1),
\]
where
\begin{eqnarray*}
A(\varphi_1)&=&a_{11}\cos\varphi_1+a_{12}\sin\varphi_1\\
E_b(\varphi_1)&=&b+m\gamma_b(\varphi_1)\cos\varphi_1.
\end{eqnarray*}
Differentiating with respect to \(b\), at fixed \(\varphi_1\), gives
\[
\partial_b f_b(\varphi_1)
=
(\partial_b\gamma_b)(\varphi_1)A(\varphi_1)
-
\frac1\lambda
\frac{\partial_b E_b(\varphi_1)}
     {E_b(\varphi_1)}.
\]
Since
\[
\partial_b E_b(\varphi_1)
=
1
+
m(\partial_b\gamma_b)(\varphi_1)\cos\varphi_1,
\]
we obtain
\[
\partial_b f_b(\varphi_1)
=
(\partial_b\gamma_b)(\varphi_1)A(\varphi_1)
-
\frac1\lambda
\frac{
1+m(\partial_b\gamma_b)(\varphi_1)\cos\varphi_1
}
{E_b(\varphi_1)}.
\]
Evaluating at the critical point gives
\[
\frac{d\xi_i}{db}
=
(\partial_b\gamma_b)(\widehat\varphi_i)A(\widehat\varphi_i)
-
\frac1\lambda
\frac{
1+m(\partial_b\gamma_b)(\widehat\varphi_i)
       \cos\widehat\varphi_i
}
{E_b(\widehat\varphi_i)}.
\]

The first term is uniformly bounded. Indeed, \(A\) is smooth on
\(\mathbb S^1\), and by assumption
$
\|\partial_b\gamma_b\|_{C^0}\le C_\gamma.
$
Thus there exists \(C_0>0\), independent of \(m\) and \(\lambda\), such
that
$
\left|
(\partial_b\gamma_b)(\widehat\varphi_i)A(\widehat\varphi_i)
\right|
\le C_0.
$
We now estimate the logarithmic contribution. By the   assumption
$m\|\partial_b\gamma_b\|_{C^0}\le \frac12,$
we have
\[
\left|
1+m(\partial_b\gamma_b)(\widehat\varphi_i)
       \cos\widehat\varphi_i
\right|
\ge
\frac12.
\]
Therefore
\[
\left|
\frac1\lambda
\frac{
1+m(\partial_b\gamma_b)(\widehat\varphi_i)
       \cos\widehat\varphi_i
}
{E_b(\widehat\varphi_i)}
\right|
\ge
\frac{1}{2\lambda E_b(\widehat\varphi_i)}.
\]

Since $b\gtrsim mr_\varepsilon$, there exists \(C_1>0\) such that
$
b\le C_1m.
$
Moreover, the invariant graph is contained in the trapping annulus
$I=[r_{\min},r_{\max}],$
so
\[
0<E_b(\widehat\varphi_i)
=
b+m\gamma_b(\widehat\varphi_i)\cos\widehat\varphi_i
\le
b+m r_{\max}
\le
(C_1+r_{\max})m.
\]
Hence
\[
\frac{1}{E_b(\widehat\varphi_i)}
\ge
\frac{1}{(C_1+r_{\max})m}.
\]
Consequently,
\[
\left|
\frac1\lambda
\frac{
1+m(\partial_b\gamma_b)(\widehat\varphi_i)
       \cos\widehat\varphi_i
}
{E_b(\widehat\varphi_i)}
\right|
\ge
\frac{1}{2(C_1+r_{\max})}
\frac1{\lambda m}.
\]
Combining this estimate with the boundedness of the first term yields
\[
\left|
\frac{d\xi_i}{db}(b)
\right|
\ge
\frac{c_1}{\lambda m}-C_0
\]
for some constant \(c_1>0\), independent of \(m\) and \(\lambda\). Since
$
\lambda m\to0
$
as \(m,\lambda\to0\), the singular term dominates the bounded
remainder. Thus, for all sufficiently small \(m\) and \(\lambda\),
$
\left|
\frac{d\xi_i}{db}
\right|
\ge
\frac{c_1}{2\lambda m}.
$
Setting
$
\tau:=\frac{c_1}{2},
$
we obtain
$
\left|
\frac{d\xi_i}{db}(b)
\right|
\ge
\frac{\tau}{\lambda m}.
$
In particular, the derivative is non-zero. This proves the
transversality estimate.

\end{proof}

The transversality is generated by the logarithmic singularity.
As \(b\to (mr_\varepsilon)^+\), the derivative of the critical
value with respect to the parameter grows like
\((\lambda m)^{-1}\), while all remaining contributions are of
strictly lower order. \\

\begin{lemma}[\textbf{Bounded distortion}]
\label{lem:bounded_distortion_refined}
Suppose that
$
m\ll\lambda\ll1$ and $b\gtrsim mr_\varepsilon$. 
Let \(\delta_0>0\) be as in
Lemma~\ref{lem:uniform_expansion}, and denote by
$\mathcal C_{\delta_0} =\{\varphi\in\mathbb S^1:
\operatorname{dist}(\varphi,\mathcal C_b)<\delta_0\}$
the \(\delta_0\)-neighbourhood of the critical set
\[
\mathcal C_b=\{\varphi\in\mathbb S^1:f_b'(\varphi)=0\}.
\]

Assume that the logarithmic derivative is uniformly bounded outside
the critical neighbourhood, namely that there exists $K_0>0$(\footnote{This constant may depend on $m$.}) such
that
\[
\sup_{\varphi_1\notin\mathcal C_{\delta_0}}
\left|
\frac{f_b''(\varphi_1)}{f_b'(\varphi_1)}
\right|
\le K_0.
\tag{9.8}
\label{eq:log_derivative_bound}
\]

Let \(J\subset\mathbb S^1\) be an interval such that, for
\(0\le k<n\),
\[
f_b^k(J)\cap \mathcal C_{\delta_0}=\varnothing,
\]
and such that the iterates \(f_b^k|_J\), \(0\le k\le n\), are taken
on a single monotonicity branch. 
Then, for all \(x,y\in J\),
\begin{equation}
\label{bounded_dist}
\left|
\log
\frac{|(f_b^n)'(x)|}
     {|(f_b^n)'(y)|}
\right|
\le
K_0
\sum_{k=0}^{n-1}
|f_b^k(x)-f_b^k(y)|,
\end{equation}
where the distances are computed in the chosen lifts.
Moreover, if
\[
|f_b'(\varphi_1)|\ge \Lambda>1,
\qquad
\forall \varphi_1\notin\mathcal C_{\delta_0},
\]
as in Lemma~\ref{lem:uniform_expansion}, then there exists
\(C_0>0\) such that
\[
\frac{|(f_b^n)'(x)|}
     {|(f_b^n)'(y)|}
\le
\exp\!\left(
C_0\,|f_b^n(x)-f_b^n(y)|
\right)
\le
e^{C_0}.
\]

\end{lemma}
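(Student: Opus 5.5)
The plan is the standard bounded-distortion argument: write the logarithm of the ratio of derivatives as a telescoping sum along the orbit and control each term with the mean value theorem and \eqref{eq:log_derivative_bound}. By the chain rule,
\[
\log\frac{|(f_b^n)'(x)|}{|(f_b^n)'(y)|}=\sum_{k=0}^{n-1}\Bigl(\log|f_b'(f_b^k(x))|-\log|f_b'(f_b^k(y))|\Bigr).
\]

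For each $k$ we work on the segment $[f_b^k(x),f_b^k(y)]\subset f_b^k(J)$, taken in the chosen lift. This segment is a genuine interval because we stay on a single monotonicity branch. Since $f_b^k(J)\cap\mathcal C_{\delta_0}=\varnothing$, the derivative $f_b'$ does not vanish on it, so $\log|f_b'|$ is $C^1$ there with derivative $f_b''/f_b'$. The mean value theorem together with \eqref{eq:log_derivative_bound} then bounds the $k$-th term by $K_0|f_b^k(x)-f_b^k(y)|$. Summing over $k$ gives \eqref{bounded_dist}.

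For the second assertion, the key step is backward contraction. Each $f_b^k(J)$ with $k<n$ avoids $\mathcal C_{\delta_0}$, and on the current branch $f_b$ is monotone with $|f_b'|\ge\Lambda$ there. Applying the mean value theorem to each of the $n-k$ remaining iterates gives
\[
|f_b^n(x)-f_b^n(y)|\ge\Lambda^{n-k}|f_b^k(x)-f_b^k(y)|.
\]
Hence the sum in \eqref{bounded_dist} is a geometric series, bounded by $\sum_{j\ge1}\Lambda^{-j}|f_b^n(x)-f_b^n(y)|=(\Lambda-1)^{-1}|f_b^n(x)-f_b^n(y)|$. We therefore set $C_0=K_0/(\Lambda-1)$, which yields the first inequality.

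For the final bound $e^{C_0}$ we need $|f_b^n(x)-f_b^n(y)|\le1$. On a single monotonicity branch the lifted image $f_b^n(J)$ has length at most one fundamental domain, which is $2\pi$ in the lift. To get the bound exactly as stated, we normalise the circle to length $1$, or equivalently absorb the factor $2\pi$ into $C_0$.

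The main obstacle is the mean value step along the orbit. We must ensure that the whole segment between $f_b^k(x)$ and $f_b^k(y)$, and not only its endpoints, avoids $\mathcal C_{\delta_0}$ and lies where $|f_b'|\ge\Lambda$. This is exactly what the hypothesis $f_b^k(J)\cap\mathcal C_{\delta_0}=\varnothing$ together with connectedness of $f_b^k(J)$ on a monotone branch provides, so we will state this explicitly. We will also note that $K_0$ may depend on $m$, as the footnote allows, so $C_0$ is uniform only for fixed $m,\lambda$.
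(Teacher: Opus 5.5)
Your proposal is correct and follows the paper's proof essentially step for step: the telescoping sum of $\log|f_b'|$ along the orbit, the mean value theorem combined with \eqref{eq:log_derivative_bound}, the backward contraction $|f_b^k(x)-f_b^k(y)|\le\Lambda^{-(n-k)}|f_b^n(x)-f_b^n(y)|$ giving $C_0=K_0/(\Lambda-1)$, and absorbing the bounded lifted length of $f_b^n(J)$ into $C_0$. Like the paper, you read ``single monotonicity branch'' as forcing $f_b^n(J)$ to have lifted length at most $2\pi$; if you want to avoid relying on that reading, note that $f_b^{n-1}(J)$ misses $\mathcal C_{\delta_0}$ and so has lifted length below $2\pi$, which bounds the sum in \eqref{bounded_dist} by $2\pi\Lambda/(\Lambda-1)$ directly.
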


\begin{proof}
We adapt the proof of \cite[Lemma 2.2]{TW2012} and \cite[Lemma 2.1]{T2012}.
Choose lifts of \(J\), \(f_b^k(J)\), and \(f_b^n(J)\) to
\(\mathbb R\) so that all iterates under consideration are
diffeomorphisms between intervals. We keep the same notation for the
lifts. By the chain rule, one has:
\[
(f_b^n)'(x)
=
\prod_{k=0}^{n-1}
f_b'\bigl(f_b^k(x)\bigr),
\]
and similarly for \(y\). Hence
\[
\log
\frac{|(f_b^n)'(x)|}
     {|(f_b^n)'(y)|}
=
\sum_{k=0}^{n-1}
\left[
\log|f_b'(f_b^k(x))|
-
\log|f_b'(f_b^k(y))|
\right].
\]

Let $
\psi(\varphi_1)=\log|f_b'(\varphi_1)|.$ 
Since
\[
f_b^k(J)\cap\mathcal C_{\delta_0}=\varnothing,
\qquad
0\le k<n,
\]
and since \(f_b'\neq0\) on
\(\mathbb S^1\setminus\mathcal C_{\delta_0}\), the function
\(\psi\) is \(C^1\) on each interval \(f_b^k(J)\). Moreover,
by \eqref{eq:log_derivative_bound},
\[
|\psi'(\varphi_1)|
=
\left|
\frac{f_b''(\varphi_1)}{f_b'(\varphi_1)}
\right|
\le K_0,
\qquad
\forall \varphi_1\notin\mathcal C_{\delta_0}.
\]
Applying the Mean Value Theorem on the lifted interval \(f_b^k(J)\),
we obtain
\[
\left|
\log|f_b'(f_b^k(x))|
-
\log|f_b'(f_b^k(y))|
\right|
\le
K_0 |f_b^k(x)-f_b^k(y)|.
\]
Summing over \(k=0,\ldots,n-1\) gives
\[
\left|
\log
\frac{|(f_b^n)'(x)|}
     {|(f_b^n)'(y)|}
\right|
\le
K_0
\sum_{k=0}^{n-1}
|f_b^k(x)-f_b^k(y)|,
\]
which proves \eqref{bounded_dist}.
We now prove the second estimate. Let
$
L:=\Lambda>1.
$
For each \(0\le k<n\), the map
\[
f_b^{\,n-k}:f_b^k(J)\longrightarrow f_b^n(J)
\]
is a diffeomorphism on the chosen branch. Since all intermediate
iterates avoid \(\mathcal C_{\delta_0}\), Lemma~\ref{lem:uniform_expansion}
implies
\[
\left|(f_b^{\,n-k})'(\zeta)\right|
\ge
L^{\,n-k},
\qquad
\forall \zeta\in f_b^k(J).
\]
By the Mean Value Theorem, for each \(0\le k<n\), there exists
\(\zeta_k\in f_b^k(J)\) such that
\[
|f_b^n(x)-f_b^n(y)|
=
\left|(f_b^{\,n-k})'(\zeta_k)\right|
\,|f_b^k(x)-f_b^k(y)|.
\]
Therefore
\[
|f_b^k(x)-f_b^k(y)|
\le
L^{-(n-k)}
|f_b^n(x)-f_b^n(y)|.
\]
Consequently,
\[
\sum_{k=0}^{n-1}
|f_b^k(x)-f_b^k(y)|
\le
|f_b^n(x)-f_b^n(y)|
\sum_{j=1}^{n}L^{-j}.
\]
Since $\dpt 
\sum_{j=1}^{n}L^{-j}
\le
\frac{1}{L-1},
$
we get
\[
\sum_{k=0}^{n-1}
|f_b^k(x)-f_b^k(y)|
\le
\frac{1}{L-1}
|f_b^n(x)-f_b^n(y)|.
\]
Substitution into \eqref{bounded_dist} yields
\[
\left|
\log
\frac{|(f_b^n)'(x)|}
     {|(f_b^n)'(y)|}
\right|
\le
\frac{K_0}{L-1}
|f_b^n(x)-f_b^n(y)|.
\]
Define
$
C_0:=\frac{K_0}{L-1}.
$
Then
\[
\frac{|(f_b^n)'(x)|}
     {|(f_b^n)'(y)|}
\le
\exp\!\left(
C_0\,|f_b^n(x)-f_b^n(y)|
\right).
\]

Finally, since \(f_b^n(x)\) and \(f_b^n(y)\) belong to the same lifted
interval projecting to \(\mathbb S^1\), their distance is bounded by a
uniform constant. Absorbing this bound into \(C_0\), if necessary, we
obtain
\[
\exp\!\left(
C_0\,|f_b^n(x)-f_b^n(y)|
\right)
\le
e^{C_0}.
\]
This completes the proof.

\end{proof}

\begin{remark}
\label{rem:log_derivative_bound}

The essential input in Lemma~\ref{lem:bounded_distortion_refined} is the
uniform bound
\[
\sup_{\varphi\notin\mathcal C_{\delta_0}}
\left|
\frac{f_b''(\varphi_1)}{f_b'(\varphi_1)}
\right|
<\infty.
\]

In the present singular family, this estimate may be verified directly
from the explicit expressions of \(f_b'\) and \(f_b''\), together with
the scale separation
\[
E_b(\varphi_1)\ge c_E m,
\qquad
\varphi_1\notin \mathcal C_{\delta_0}.
\]

Indeed, Lemmas \ref{Lemma Aux2}  and
\ref{lem:uniform_expansion} give
\[
f_b'(\varphi_1)
=
\frac{m}{\lambda}
\frac{\gamma_b(\varphi_1)\sin\varphi_1}
     {E_b(\varphi_1)}
+\mathcal{O}(1).
\]
 
Using Lemma~\ref{Lemma Aux1},
together with
\(E_b(\varphi_1)\ge c_E m\)
outside \(C_{\delta_0}\),
one obtains
$
f_b''(\varphi_1)
=
\mathcal{O}\!\left(\frac1{\lambda m}\right)
$
uniformly on
\(\EU^1\setminus C_{\delta_0}\).
Indeed, differentiating
$
\frac{m}{\lambda}
\frac{\gamma_b(\varphi_1)\sin\varphi_1}
     {E_b(\varphi_1)}$ with respect to $\varphi_1$
 and using
\[
\gamma_b'=\mathcal{O}(m),
\qquad
\gamma_b''=\mathcal{O}(m),
\qquad
E_b(\varphi_1)\ge c_E m,
\]
one obtains
$
f_b''(\varphi_1)
=
\mathcal{O}\!\left(\frac{1}{\lambda m}\right).
$
Since \(|\sin\varphi_1|\ge c_\delta>0\) and
\(E_b(\varphi_1)\ge c_E m\) away from the critical neighbourhood, it follows that
\[
|f_b'(\varphi_1)|
\ge
\frac{c_1}{\lambda}
\]
for some constant \(c_1>0\). Consequently,
\[
\left|
\frac{f_b''(\varphi_1)}{f_b'(\varphi_1)}
\right|
\le
\frac{K_0}{m},
\qquad
\varphi_1\notin\mathcal C_{\delta_0},
\]
for a constant \(K_0>0\) independent of \(\lambda\).
Hence the logarithmic derivative is uniformly bounded on
\(\mathbb S^1\setminus\mathcal C_{\delta_0}\) for each fixed \(m>0\).
It is worth emphasizing that this estimate does not follow from the
mere non-degeneracy of the critical points established in
Lemma \ref{lem:nondegeneracy}; it requires the explicit
structure of the singular logarithmic term and the lower bound
\(E_b(\varphi_1)\ge c_E m\) outside the critical region.
\end{remark}

\section{$f_b$ is a Misiurewicz map for a set with positive Lebesgue measure}
\label{s: f_b is M}
The goal of this section is to prove that the map $f_b$, as defined in \eqref{f_b def}, is \emph{Misiurewicz} for a set of parameters $b \gtrsim m r_\varepsilon$ with positive Lebesgue measure. We remind the reader that we are focusing on the map:
\[
f_b(\varphi_1) = \gamma_b(\varphi_1) \left( a_{11} \cos \varphi_1 + a_{12} \sin \varphi_1 \right) - \frac{1}{\lambda} \ln(b + m \gamma_b(\varphi_1) \cos \varphi_1) \pmod{2\pi},
\]
where $\gamma_b(\varphi_1)$ is the $C^3$--invariant curve of Lemma \ref{thm:invariant_graph}. \\

 \begin{lemma}[\textbf{Misiurewicz condition {\rm(a)}}]
\label{lem:misiurewicz_a_full}

Assume that
$
m\ll\lambda\ll1$ and $b\gtrsim mr_\epsilon$. Fix $\varrho>0$ sufficiently small and let
$\delta_0>0$ be given by
Lemma~\ref{lem:uniform_expansion}.
Then there exists a parameter set
$
\Delta_\varrho
\subset
[mr_\varepsilon,mr_\varepsilon+\varrho]
$
with
$
\operatorname{Leb}(\Delta_\varrho)>0,
$
such that for every
$b\in\Delta_\varrho$
and for both critical points
$\widehat\varphi_1^{(i)}(b)$,
$i=1,2$, the following condition is valid:
\[
f_b^n
\bigl(
\widehat\varphi_1^{(i)}(b)
\bigr)
\notin
C_{\delta_0},
\qquad
\forall n\ge1.
\]

\end{lemma}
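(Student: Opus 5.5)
The plan is a parameter-exclusion argument in the spirit of Benedicks--Carleson and Wang--Young. The driving fact is Lemma~\ref{lem:transversality}: each critical value $\xi_i(b)=f_b(\widehat\varphi_1^{(i)}(b))$ moves with speed of order $(\lambda m)^{-1}$. Outside $C_{\delta_0}$ the phase is expanded at rate at least $K/(2\lambda)$ by Lemma~\ref{lem:uniform_expansion}. The approach is to show that the parameter derivative of $f_b^n(\widehat\varphi_1^{(i)}(b))$ stays comparable to the phase derivative $(f_b^{n-1})'(\xi_i(b))$. Then the bad parameters at step $n$ are those for which the $n$-th iterate of the critical orbit enters $C_{\delta_0}$, and they occupy a geometrically small fraction of each surviving parameter interval.

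\textbf{Step 1: setup.} I would work on a compact interval $B=[b_0,b_1]\subset[mr_\varepsilon,mr_\varepsilon+\varrho]$ on which $E_b\ge\eta>0$, so that all estimates of Section~\ref{s: Preparatory} hold uniformly. On $B$ the two critical points $\widehat\varphi_1^{(i)}(b)$ lie in $\mathcal O(\lambda)$-neighbourhoods of $0$ and $\pi$ by Lemma~\ref{lem:nondegeneracy}. They are non-degenerate, so the implicit function theorem gives $C^1$ continuations in $b$.

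\textbf{Step 2: parameter--phase comparison.} Write $\Xi_n^{(i)}(b)=f_b^n(\widehat\varphi_1^{(i)}(b))$. The chain rule gives
\[
\frac{d\Xi_{n}^{(i)}}{db}=f_b'(\Xi_{n-1}^{(i)})\,\frac{d\Xi_{n-1}^{(i)}}{db}+\partial_b f_b(\Xi_{n-1}^{(i)}).
\]
By the computation in the proof of Lemma~\ref{lem:transversality}, $|\partial_b f_b|\le C/(\lambda m)$ on the relevant region. Iterating, and using that $|f_b'|\ge K/(2\lambda)$ as long as the orbit avoids $C_{\delta_0}$, the first term dominates. Indeed, the ratio of the error to the main term is bounded by
\[
\sum_j \frac{C/(\lambda m)}{(\tau/(\lambda m))\,(K/2\lambda)^{j}},
\]
which is small when $\lambda\ll1$. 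Hence, while the orbit has stayed outside $C_{\delta_0}$ up to time $n-1$,
\[
\tfrac12\le \frac{|d\Xi_n^{(i)}/db|}{|(f_b^{n-1})'(\xi_i(b))|\,|\xi_i'(b)|}\le 2 .
\]
Combining this with Lemma~\ref{lem:bounded_distortion_refined}, the map $b\mapsto\Xi_n^{(i)}(b)$ has bounded distortion on each parameter subinterval where it stays on one branch outside $C_{\delta_0}$.

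\textbf{Step 3: inductive exclusion and measure estimate.} The induction runs on a nested family of parameter sets $\Delta^{(n)}$. Each component $\omega$ of $\Delta^{(n-1)}$ is mapped by $\Xi_{n}^{(i)}$ with bounded distortion and very large derivative. Its image therefore either wraps the circle many times or is an interval of length at least of order $\lambda^{-n}|\omega|$. I would remove from $\omega$ the preimage of $C_{\delta_0}$, namely the two intervals of length $2\delta_0$ around the critical points. By bounded distortion, the removed proportion is at most $C\delta_0$ per full wrap. Images that are not yet long are kept and subdivided at the next step, since growth by a factor of order $\lambda^{-1}$ makes them long within boundedly many steps.

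This alone gives a removed fraction per step of order $\delta_0$ that does not decay, so the product $\prod(1-C\delta_0)$ tends to zero. The remedy I would try first is to make the excluded target shrink along the orbit: require $d(\Xi_n,\mathcal C_b)>2\delta_0$ only in the definition, but recall that the sketch above already needs exclusion only at times $n$ when the image first becomes long. More precisely, I would choose $\delta_0$ independent of $\lambda$ and exploit the fact that the singular expansion $K/\lambda$ makes consecutive long times rare relative to the growth. Equivalently, after one long stretch, the critical orbit lands in the region where $f_b$ maps onto the circle with slope of order $1/\lambda$, and the subsequent orbit behaves like a uniformly expanding map of large degree. The excluded measure is then summable with total at most $C\delta_0\sum_k\lambda^{k}$. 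Handling both critical points at once only doubles the constants. The final set $\Delta_\varrho=\bigcap_n\Delta^{(n)}$ then has measure at least $(1-C'\delta_0)\,\mathrm{Leb}(B)>0$.

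\textbf{Main obstacle.} I expect the main obstacle to be Step 3. One must justify that summability without a genuine binding argument, and must control the region near $\pi$ where $E_b$ is small, which is where Remark~\ref{rem:uniform_expansion_scale_separation} warns that uniformity fails. My plan is to keep the critical orbit away from the whole singular angular region, which is contained in $C_{\delta_0}$.
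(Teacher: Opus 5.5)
Your Steps 1--2 coincide with the paper's argument: the same nested parameter sets, the same recursion $\xi_n'=f_b'(\xi_{n-1})\,\xi_{n-1}'+\partial_b f_b(\xi_{n-1})$, and the same lower bound $|\xi_n'|\gtrsim(\lambda m)^{-1}L^{n-1}$. Step~3, however, has a genuine gap, and your proposed remedy fails.

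The lemma demands avoidance of $C_{\delta_0}$ at \emph{every} $n$, so exclusion is needed at every step, not only at the first ``long time''. Moreover, once images are long they stay long. Cut a wrapping component $\omega$ along the preimage of $C_{\delta_0}$. Up to at most two end pieces, what survives consists of subintervals $\omega'$ on which $\Xi^{(i)}_n$ maps onto a full arc of $\mathbb S^1\setminus C_{\delta_0}$, of length about $\pi-2\delta_0$, since the critical points sit near $0$ and $\pi$. At the next step $\Xi^{(i)}_{n+1}(\omega')$ has length $\gtrsim\lambda^{-1}$ and wraps the circle many times. Bounded distortion then forces a proportion $\ge c\,\delta_0$ of $\omega'$ into the preimage of $C_{\delta_0}$.

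So the excluded proportion is of order $\delta_0$ at every step, and $\operatorname{Leb}(\bigcap_n\Delta^{(n)})\le\lim_n(1-c\delta_0)^n\operatorname{Leb}(B)=0$. The bound $C\delta_0\sum_k\lambda^k$ has no basis. Your own heuristic that the critical orbit ``behaves like a uniformly expanding map of large degree'' is exactly why: for such a map the set of points that never enter a fixed open hole is Lebesgue-null, and transversality plus distortion carry this to the parameter line. Shrinking the target would restore summability, but then the stated conclusion $f_b^n(\widehat\varphi_1^{(i)}(b))\notin C_{\delta_0}$ is lost.

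Your diagnosis just before the ``remedy'' was correct, and the paper does not resolve it either. It closes the induction by asserting $\operatorname{Leb}\{b\in V:\xi_n^{(i)}(b)\in C_{\delta_0}\}\le C\delta_0L^{-(n-1)}\operatorname{Leb}(V)$. The factor $L^{-(n-1)}$ is right for the absolute length of each single component of that set, which is at most of order $\delta_0\lambda mL^{-(n-1)}$. But once $\xi_n(V)$ wraps, the number of such components is of order $\operatorname{Leb}(V)L^{n-1}/(\lambda m)$, so the relative proportion is again of order $\delta_0$.

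With $\delta_0$ fixed, the estimates available here (expansion $\gtrsim\lambda^{-1}$ off $C_{\delta_0}$, transversality, bounded distortion) make the set of parameters for which both critical orbits avoid $C_{\delta_0}$ Lebesgue-null. No fixed-$\delta_0$ exclusion can therefore prove the lemma as stated. A positive-measure result needs the genuine Benedicks--Carleson/Wang--Young scheme you alluded to: thresholds $d(\xi_n(b),\mathcal C_b)\ge e^{-\alpha n}$ together with bound-period estimates. That scheme yields Collet--Eckmann-type parameters rather than Misiurewicz ones. Note also that Theorem~\ref{thm:WY} needs only a single Misiurewicz parameter $b^*$ as input; the positive-measure set is its output, not something you must establish beforehand.
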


\begin{proof}
 The estimates obtained above verify the hypotheses used in the
parameter exclusion construction of   \cite{TW2012}: expansion along critical orbits,
parameter transversality, bounded distortion,
and monotonicity of the critical value maps.
Therefore the measure estimate for the excluded
parameter set established in  \cite[Lemmas 3.4 and 3.8 (adapted)]{TW2012}
may be adapted to our purposes (our case is simpler since $f_b$ does not have singularities).
Assume that the reduced family \(f_b\) possesses two
\(C^1\)-continuations of non-degenerate critical points

\[
\widehat\varphi_1^{(i)}(b),
\qquad
i=1,2,
\]
and recall that the following ingredients have been
established in Section~\ref{s: Preparatory}:

\begin{itemize}

\item uniform expansion outside the critical
neighbourhood \(C_{\delta_0}\)
(Lemma~\ref{lem:uniform_expansion});

\item parameter transversality
(Lemma~\ref{lem:transversality});

\item bounded distortion on admissible branches
(Lemma~\ref{lem:bounded_distortion_refined});

\item \(C^1\)-dependence of the family with respect
to the parameter \(b\)
(Lemma~\ref{thm:invariant_graph}).

\end{itemize}

Let $B_0 =[mr_\varepsilon,mr_\varepsilon+\varrho].$
For each \(n\ge1\), define inductively

\[
B_n
=
\Bigl\{
b\in B_{n-1}:
f_b^j
\bigl(
\widehat\varphi_1^{(i)}(b)
\bigr)
\notin
C_{\delta_0},
\;
1\le j\le n,
\;
i=1,2
\Bigr\}.
\]

The surviving parameter set is
$\Delta_\varrho= \displaystyle
\bigcap_{n\ge1}B_n. $
Fix one critical point and write
$
\widehat\varphi(b)
=
\widehat\varphi_1^{(i)}(b).
$
Define
$\xi_0(b)=\widehat\varphi(b)$, and $\xi_n(b) =
f_b^n(\widehat\varphi(b))$, for $n\ge1.$ 
Differentiating $\xi_n(b)=f_b(\xi_{n-1}(b))$ with respect to \(b\), we obtain

\[
\xi_n'(b)
=
\partial_{\varphi_1}f_b(\xi_{n-1}(b))
\,\xi_{n-1}'(b)
+
\partial_bf_b(\xi_{n-1}(b)).
\]
Since
$\partial_{\varphi_1}f_b(\widehat\varphi(b))
=
0,$
one has
$
\xi_1'(b)
=
\partial_bf_b(\widehat\varphi(b)).
$
By Lemma~\ref{lem:transversality}, there exists
\(\tau_0>0\) such that

\[
|\xi_1'(b)|
\ge
\frac{\tau_0}{\lambda m}.
\]
\bigbreak 
\emph{Justification of the previous inequality:} Assume that
\[
\xi_j(b)\notin C_{\delta_0},
\qquad
1\le j\le n-1.
\]
By Lemma~\ref{lem:uniform_expansion},
there exists \(L>1\) such that
$
|(f_b^j)'(\xi_k(b))|
\ge
L^j$
whenever the corresponding orbit segment avoids
\(C_{\delta_0}\).
 Using
Lemma~\ref{thm:invariant_graph}
and the explicit expression of
\(\partial_bf_b\),
there exists \(M_0>0\) such that

\[
|\partial_bf_b(\varphi_1)|
\le
\frac{M_0}{\lambda m},
\qquad
\forall
\varphi_1\notin C_{\delta_0}.
\]

Consequently,

\[
|\xi_n'(b)|
\ge
\frac{L^{n-1}}{\lambda m}
\left[
\tau_0
-
\frac{M_0}{L-1}
\bigl(
1-L^{-(n-1)}
\bigr)
\right].
\]
Since $L=\dpt \inf_{\varphi_1\notin C_{\delta_0}}|f_b'(\varphi_1)|$ 
and
\(L\to+\infty\)
as
\(\lambda\to0\),
one may choose \(\lambda>0\) sufficiently small so that
$\frac{M_0}{L-1} <
\frac{\tau_0}{2}.
$ 
Using Lemma~\ref{lem:transversality}, one has
$
|\xi_1'(b)|
\ge
\frac{\tau_0}{\lambda m}.
$
Moreover, by the explicit expression of $\partial_b f_b$
and the bound $\|\partial_b\gamma_b\|_{C^0}\le C_\gamma$
(Lemma~\ref{thm:invariant_graph}), there exists a constant $M_0>0$
independent of $m$ and $\lambda$ such that
\[
|\partial_b f_b(\varphi_1)|
\le
\frac{M_0}{\lambda m},
\qquad
\forall \varphi_1\notin C_{\delta_0}.
\]
Assume that $\xi_j(b)\notin C_{\delta_0}$, for all $1\le j\le n-1.$
Iterating the derivative identity yields
\[
\xi_n'(b)
=
(f_b^{\,n-1})'(\xi_1(b))\,\xi_1'(b)
+
\sum_{k=1}^{n-1}
(f_b^{\,n-1-k})'(\xi_{k+1}(b))
\,\partial_b f_b(\xi_k(b)).
\]
By Lemma~\ref{lem:uniform_expansion}, if the corresponding orbit segment avoids
$C_{\delta_0}$, then
\[
\big|(f_b^{\,j})'(x)\big|
\ge
L^j,
\qquad
L:=\inf_{\varphi_1\notin C_{\delta_0}}|f_b'(\varphi_1)|>1.
\]
Therefore
\[
\begin{aligned}
|\xi_n'(b)|
&\ge
L^{n-1}\frac{\tau_0}{\lambda m}
-
\sum_{k=1}^{n-1}
L^{\,n-1-k}\frac{M_0}{\lambda m}\\[2mm]
&=
\frac{L^{n-1}}{\lambda m}
\left[
\tau_0
-
\frac{M_0}{L-1}
\bigl(1-L^{-(n-1)}\bigr)
\right].
\end{aligned}
\]
Since $L\to+\infty$ as $\lambda\to0$, we may choose
$\lambda>0$ sufficiently small so that
$\frac{M_0}{L-1}<\frac{\tau_0}{2}.$
Hence
\[
|\xi_n'(b)|
\ge
\frac{\tau_0}{2\lambda m}L^{\,n-1}.
\]

In particular, $\xi_n'(b)\neq0$ on every admissible branch,
and therefore $\xi_n$ is strictly monotone on each connected
component of $B_{n-1}$.
At this point, all hypotheses required by the
parameter exclusion construction by  \cite[Section~3.2]{TW2012}
have been verified:
uniform expansion,
parameter transversality,
bounded distortion,
and smooth parameter dependence.
Therefore the parameter maps
$$
\xi_n^{(i)}
:
V
\longrightarrow
\xi_n^{(i)}(V)
$$
have uniformly bounded distortion on every
admissible branch \(V\subset B_{n-1}\).
Then, there exist constants \(C,D>0\),
independent of \(n\),
such that

\[
\operatorname{Leb}
\Bigl(
\{b\in V:
\xi_n^{(i)}(b)\in C_{\delta_0}\}
\Bigr)
\le
C\,\delta_0\,L^{-(n-1)}
\operatorname{Leb}(V).
\]
Summing over all admissible components and over
\(i=1,2\), we obtain
\[
\operatorname{Leb}(B_{n-1}\setminus B_n)
\le
C_1
\delta_0
L^{-(n-1)}
\operatorname{Leb}(B_{n-1}),
\]
for some constant \(C_1>0\).
Set
$
\varepsilon_n
=
C_1\delta_0L^{-(n-1)}.
$
Since \(L>1\), we may conclude that
$\sum_{n=1}^{\infty}\varepsilon_n
<
\infty.
$ Hence the standard Benedicks--Carleson
argument implies
\[
\operatorname{Leb}(B_n)
\ge
\operatorname{Leb}(B_0)
\prod_{k=1}^{n}(1-\varepsilon_k).
\]
Since the infinite product converges to a strictly
positive number,
$
\operatorname{Leb}
\Bigl(
\bigcap_{n\ge1}B_n
\Bigr)
>0.
$
Therefore $\operatorname{Leb}(\Delta_\varrho)>0.$
For every \(b\in\Delta_\varrho\) and every critical
point \(\widehat\varphi_1^{(i)}(b)\),
\[
f_b^n
\bigl(
\widehat\varphi_1^{(i)}(b)
\bigr)
\notin
C_{\delta_0},
\qquad
\forall n\ge1.
\]
Thus the critical orbits never return to the
critical neighbourhood, establishing condition
{\rm(a)} of Definition~\ref{def-Misiur}.
\end{proof}
 
 \begin{lemma}[\textbf{Misiurewicz expansion condition {\rm(b)}}]
\label{lem:misiurewicz_b_full}

Assume the hypotheses of Lemma~\ref{lem:uniform_expansion}. In
particular, assume that
$m\ll\lambda\ll1$ and $b \gtrsim mr_\varepsilon$,
and let \(\delta_0>0\) be the critical-neighbourhood size given there.
Then there exist constants
\[
\lambda_0>0,
\qquad
0<c_0\le1,
\]
such that the following properties hold.

\begin{itemize}

\item[\emph{(b1)}]
For every \(n\ge1\), if
$
\varphi_1,
f_b(\varphi_1),
\dots,
f_b^{\,n-1}(\varphi_1)
\in
\mathbb S^1\setminus \mathcal C_{\delta_0},
$
then
\[
\bigl|(f_b^n)'(\varphi_1)\bigr|
\ge
e^{\lambda_0 n}.
\]

\item[\emph{(b2)}]
For every \(n\ge1\), if
$
\varphi_1,
f_b(\varphi_1),
\dots,
f_b^{\,n-1}(\varphi_1)
\in
\mathbb S^1\setminus \mathcal C_{\delta_0},
$
and
$
f_b^n(\varphi_1)\in \mathcal C_{\delta_0},
$
then
\[
\bigl|(f_b^n)'(\varphi_1)\bigr|
\ge
c_0 e^{\lambda_0 n}.
\]

\end{itemize}

\end{lemma}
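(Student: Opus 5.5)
The plan is to derive both (b1) and (b2) directly from the one-step expansion estimate of Lemma~\ref{lem:uniform_expansion}, using the chain rule along orbit segments that avoid $\mathcal C_{\delta_0}$. That lemma gives $\sigma_0>0$ with $|f_b'(\varphi_1)|\ge e^{\sigma_0}$ for every $\varphi_1\notin\mathcal C_{\delta_0}$. In fact its proof gives the stronger bound $|f_b'|\ge K/(2\lambda)$, so we may take $e^{\sigma_0}=K/(2\lambda)$, which is large when $\lambda\ll1$.

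For (b1), take $\varphi_1$ with $\varphi_1,f_b(\varphi_1),\dots,f_b^{n-1}(\varphi_1)\notin\mathcal C_{\delta_0}$. The chain rule gives
\[
(f_b^n)'(\varphi_1)=\prod_{k=0}^{n-1}f_b'\bigl(f_b^k(\varphi_1)\bigr).
\]
Only the points $f_b^k(\varphi_1)$ with $0\le k\le n-1$ enter this product, and each lies outside $\mathcal C_{\delta_0}$. Each factor is therefore bounded below by $e^{\sigma_0}$, so $|(f_b^n)'(\varphi_1)|\ge e^{\sigma_0 n}$. Setting $\lambda_0:=\sigma_0$ gives (b1) for every $n\ge1$, with no threshold $M_0$ needed ($M_0=1$ is allowed).

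For (b2), the extra hypothesis $f_b^n(\varphi_1)\in\mathcal C_{\delta_0}$ does not affect the product above, because the landing point $f_b^n(\varphi_1)$ is not one of its factors. The same bound therefore gives $|(f_b^n)'(\varphi_1)|\ge e^{\lambda_0 n}\ge c_0e^{\lambda_0 n}$ with $c_0=1$.

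The only step that needs care is making the constants independent of $b$, since the lemma is to be applied for $b$ ranging over $[mr_\varepsilon,mr_\varepsilon+\varrho]$. To handle this, I will check in the proof of Lemma~\ref{lem:uniform_expansion} that $K=r_{\min}c_\delta/C_E$ depends only on $\delta_0$, $r_{\min}$ and the constant $C_E$ in $E\le C_Em$. I will also check that the $\mathcal O(1)$ and $\mathcal O(m^2/\lambda)$ remainders are uniform in $b$ on the complement of $\mathcal C_{\delta_0}$, which is where Remark~\ref{rem:uniform_expansion_scale_separation} and the scale separation $E_b\ge c_Em$ are used. With these checks, $\sigma_0$ and hence $\lambda_0$ are uniform in $b$, and both properties hold with $\lambda_0=\sigma_0$ and $c_0=1$.

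If one wants to leave room for later perturbation arguments, for instance when passing from $f_b$ to $\mathcal R_b$, one can instead take $\lambda_0=\sigma_0/2$ and some $c_0<1$. The argument is unchanged.
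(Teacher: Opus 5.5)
Your proposal is correct and follows the paper's proof essentially verbatim: take the one-step bound $|f_b'|\ge\Lambda=e^{\sigma_0}>1$ off $\mathcal C_{\delta_0}$ from Lemma~\ref{lem:uniform_expansion}, set $\lambda_0=\log\Lambda$, bound the chain-rule product factor by factor for (b1), and note for (b2) that the landing point $f_b^n(\varphi_1)$ never enters the product, so $c_0=1$ works. Your extra paragraph on uniformity in $b$ goes beyond the statement and the paper, which does not address it; that check would also not be automatic, because the bound $\gamma_b'=\mathcal O(m)$ behind the remainders is only established on compact parameter intervals where $b+mr_1\cos\varphi_1\ge\eta>0$, i.e.\ away from the threshold $mr_\varepsilon$.
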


\begin{proof}

By Lemma~\ref{lem:uniform_expansion}, there exists a constant
$\Lambda>1$
such that
$|f_b'(\varphi)|
\ge
\Lambda,
$, for all $ \varphi\in \mathbb S^1\setminus \mathcal C_{\delta_0}.$
Set $ \lambda_0:=\log\Lambda>0.$ \\

\noindent
\emph{Proof of \emph{(b1)}.}
Let
$
\varphi_j=f_b^j(\varphi_1),$ for $0\le j\le n.
$
By assumption,
$
\varphi_j\notin \mathcal C_{\delta_0},$ for all $0\le j\le n-1.$
Using the chain rule, we have:
$
(f_b^n)'(\varphi_1)
=
\prod_{j=0}^{n-1}
f_b'(\varphi_j).
$
Therefore
\[
\bigl|(f_b^n)'(\varphi_1)\bigr|
=
\prod_{j=0}^{n-1}
|f_b'(\varphi_j)|
\ge
\prod_{j=0}^{n-1}
\Lambda
=
\Lambda^n.
\]
Since
$
\Lambda^n
=
e^{(\log\Lambda)n}
=
e^{\lambda_0 n},
$
we obtain
$
\bigl|(f_b^n)'(\varphi_1)\bigr|
\ge
e^{\lambda_0 n}.
$
This proves \emph{(b1)}.

\medskip

\noindent
\emph{Proof of \emph{(b2)}.}
Assume now that
$\varphi_j\notin \mathcal C_{\delta_0},$ for all $0\le j\le n-1,$
and that
$
\varphi_n=f_b^n(\varphi_1)\in \mathcal C_{\delta_0}.
$
The derivative of \(f_b^n\) at \(\varphi_1\) is still given by
\[
(f_b^n)'(\varphi_1)
=
\prod_{j=0}^{n-1}
f_b'(\varphi_j).
\]
The terminal point \(\varphi_n\) does not enter this product. Hence
each factor in the product is evaluated at a point outside
\(\mathcal C_{\delta_0}\). By Lemma~\ref{lem:uniform_expansion},
\[
|f_b'(\varphi_j)|
\ge
\Lambda,
\qquad
0\le j\le n-1.
\]
Thus
\[
\bigl|(f_b^n)'(\varphi_1)\bigr|
\ge
\Lambda^n
=
e^{\lambda_0 n}.
\]
Consequently \emph{(b2)} holds with
$c_0=1.$
This completes the proof.

\end{proof}
 
 \begin{lemma}
\label{lem:third_derivative_bound}

Assume that
$
m\ll\lambda\ll1,$ and $b \gtrsim m r_\varepsilon.$
 Then there exists a constant \(M>0\), independent of
\(m\) and \(\lambda\), such that
$
|f_b'''(\varphi_1)|
\le
\frac{M}{\lambda}
$
for every \(\varphi_1\) belonging to a neighbourhood of the
critical set \(C_b\).

\end{lemma}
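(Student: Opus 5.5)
The plan is to differentiate the explicit formula \eqref{f_b def} three times and bound each piece on a neighbourhood $\mathcal C_{\delta}$ of the critical set, with $\delta$ comparable to $\delta_0$. Write $f_b=(\gamma_bA)-\frac1\lambda\log E$, so that
\[
f_b'''=(\gamma_bA)'''-\frac1\lambda\left(\frac{E'''}{E}-3\,\frac{E'}{E}\,\frac{E''}{E}+2\left(\frac{E'}{E}\right)^3\right).
\]
It then suffices to show two things on $\mathcal C_\delta$: $(\gamma_bA)'''=\mathcal O(1)$, and the bracket is $\mathcal O(1)$ with constants independent of $m,\lambda$.

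\textbf{Step 1 (regular part).} For the first term I invoke Lemma~\ref{thm:invariant_graph}, namely $\|\gamma_b-r^*(b)\|_{C^3}=\mathcal O(m)$. This gives $\gamma_b^{(j)}=\mathcal O(m)$ for $j=1,2,3$ and $\gamma_b=\mathcal O(1)$. Since $A$ is a trigonometric polynomial, the Leibniz rule yields $(\gamma_bA)'''=\mathcal O(1)$.

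\textbf{Step 2 (derivatives of $E$).} Expanding $E=b+m\gamma_b\cos\varphi_1$ with the same estimates gives
\[
E'=-m\gamma_b\sin\varphi_1+\mathcal O(m^2),\qquad E''=-m\gamma_b\cos\varphi_1+\mathcal O(m^2),\qquad E'''=m\gamma_b\sin\varphi_1+\mathcal O(m^2).
\]
All three are therefore $\mathcal O(m)$ uniformly.

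\textbf{Step 3 (lower bound on $E$).} This is the step I expect to be the main obstacle: I need a lower bound $E\ge c_Em$ on $\mathcal C_\delta$. The naive situation is dangerous near $\varphi_1\approx\pi$ as $b\to(mr_\varepsilon)^+$, where $E$ can degenerate. I would use the same scale-separation assumption adopted in Remark~\ref{rem:uniform_expansion_scale_separation} and Remark~\ref{rem:log_derivative_bound}, namely that along the admissible range $b\gtrsim mr_\varepsilon$ one has $E\ge c_Em$ on the region where the estimate is applied. I would try to justify this by restricting to parameters with $b-mr_{\max}\ge c\,m$; failing that, I would simply state it explicitly as part of the regime. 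Granting it, Step 2 gives $E''/E=\mathcal O(1)$ and $E'''/E=\mathcal O(1)$.

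\textbf{Step 4 (the ratio $E'/E$).} For this ratio I use the critical-point identity from the proof of Lemma~\ref{lem:nondegeneracy}: at each critical point $\widehat\varphi_1$ one has $E'(\widehat\varphi_1)/E(\widehat\varphi_1)=\mathcal O(\lambda)$. For $\varphi_1$ with $|\varphi_1-\widehat\varphi_1|<\delta$, the mean value theorem gives $E'(\varphi_1)=E'(\widehat\varphi_1)+\mathcal O(m\delta)$ and $E(\varphi_1)=E(\widehat\varphi_1)+\mathcal O(m\delta)$. Combined with $E\ge c_Em$, this yields $E'/E=\mathcal O(\lambda+\delta)=\mathcal O(1)$.

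\textbf{Conclusion.} Each term in the bracket is then $\mathcal O(1)$. Hence
\[
|f_b'''|\le C+\frac{C'}{\lambda}\le \frac{M}{\lambda}
\]
for $\lambda$ small, with $M$ depending only on $r_{\min}$, $r_{\max}$, $c_E$, $\delta_0$ and the $C^3$ bounds for $\gamma_b$.
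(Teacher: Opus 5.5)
\textbf{Where you match the paper.} Your route is the paper's route step for step. The paper uses the same decomposition $f_b'''=(\gamma_bA)'''-\lambda^{-1}(\log E)'''$, the same bounds $E',E'',E'''=\mathcal O(m)$, and the same lower bound $E\ge c_Em$ on a neighbourhood $U$ of $C_b$. It obtains that lower bound by a one-line assertion: ``since $b\gtrsim mr_\varepsilon$'', after recalling that the critical points are $\mathcal O(\lambda)$-close to $0$ or $\pi$. That is exactly the step you single out. Your Step~4 is unnecessary, because $E'=\mathcal O(m)$ together with $E\ge c_Em$ already gives $E'/E=\mathcal O(1)$.

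\textbf{The gap.} Step~3 is not just missing a justification; it cannot be closed under the stated hypotheses, and the estimate itself fails. Work near the critical point close to $\pi$ and write $\varphi_1=\pi+s$. Then $E\approx e_0+\tfrac12 m\gamma_b s^2$ with $e_0=b-m\gamma_b(\pi)$. Within the regime $b\gtrsim mr_\varepsilon$, $e_0$ can be made arbitrarily small; the paper relies on exactly this ($b-m\gamma_b(c_2(b))\to0^+$) to make $\xi_2(b)$ wind in the proof of Theorem~\ref{main2}.

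Substitute $s=\sqrt{2e_0/(m\gamma_b)}\,t$. This gives $(\log E)'''\approx (m\gamma_b/2e_0)^{3/2}h'''(t)$ with $h(t)=\log(1+t^2)$, and $h'''$ has a fixed nonzero maximum at $t$ of order one. So on any neighbourhood of fixed size,
\[
\sup_{|s|<\delta}|f_b'''|\asymp\lambda^{-1}(m/e_0)^{3/2}.
\]
Even at the critical point itself, located at $s\approx\lambda e_0R_b/(m\gamma_b)$, one finds $|f_b'''|\approx 3|R_b|\,m\gamma_b/e_0$. This exceeds $M/\lambda$ once $e_0\ll\lambda m$. Hence no constant $M$ independent of $b$ and $m$ exists in the singular regime.

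\textbf{What the lemma actually needs.} The lemma holds only under your fallback restriction $b-m\max\gamma_b\ge c\,m$, which gives $E\ge c_Em$ on all of $\mathbb S^1$. That range is disjoint from $[mr_\varepsilon,mr_\varepsilon+\varrho]$ whenever $\varrho<cm$, and that interval is where the lemma is used afterwards. Stating the lower bound as an explicit hypothesis is therefore the honest version; you should not expect to derive it.

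The same caveat applies to your Step~1. It uses $\|\gamma_b-r^*(b)\|_{C^3}=\mathcal O(m)$, but Lemma~\ref{thm:invariant_graph} proves this only on compact parameter intervals where $E\ge\eta>0$. So the same restriction is implicitly needed there too.
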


\begin{proof}

Recall that
\[
f_b(\varphi_1)
=
\gamma_b(\varphi_1)A(\varphi_1)
-
\frac{1}{\lambda}\log E(\varphi_1),
\]
where:
\begin{eqnarray*}
A(\varphi_1) &=& a_{11} \cos \varphi_1 + a_{12} \sin \varphi_1\\
E(\varphi_1) &=& b + m \gamma_b(\varphi_1) \cos \varphi_1.\\
\end{eqnarray*}

Since \(A\) is smooth on \(\EU^1\), all its derivatives up to order
three are uniformly bounded. Moreover, by
Lemma~\ref{thm:invariant_graph} and
Lemma~\ref{Lemma Aux1},
\[
\gamma_b^{(j)}(\varphi_1)
=
\mathcal{O}(m),
\qquad
j=1,2,3.
\]

Consequently,
$
(\gamma_bA)'''
=
\mathcal{O}(1).
$
It therefore remains to estimate the logarithmic term.
Differentiating \(E\), we obtain

\begin{eqnarray*}
E'(\varphi_1) &=& =
m\gamma_b'(\varphi_1)\cos\varphi_1 - m\gamma_b(\varphi_1)\sin\varphi_1\\
E''(\varphi_1) &=& =
m\gamma_b''(\varphi_1)\cos\varphi_1 - 2m\gamma_b'(\varphi_1)\sin\varphi_1
-m\gamma_b(\varphi_1)\cos\varphi_1\\
E'''(\varphi_1)
&=&
m\gamma_b'''(\varphi_1)\cos\varphi_1
-
3m\gamma_b''(\varphi_1)\sin\varphi_1
-
3m\gamma_b'(\varphi_1)\cos\varphi_1
+
m\gamma_b(\varphi_1)\sin\varphi_1.
\end{eqnarray*}

Using
$
\gamma_b=\mathcal{O}(1)
$
and
$
\gamma_b^{(j)}=\mathcal{O}(m),
$
it follows that
\[
E'
=
\mathcal{O}(m),
\qquad
E''
=
\mathcal{O}(m),
\qquad
E'''
=
\mathcal{O}(m).
\]

By Lemma~\ref{lem:nondegeneracy},
every critical point lies in an
\(\mathcal{O}(\lambda)\)-neighbourhood of \(0\) or \(\pi\).
Since $b\gtrsim mr_\varepsilon$, there exists a neighbourhood \(U\) of the
critical set and a constant \(c_E>0\) such that
\[
E(\varphi_1)
\ge
c_E\,m,
\qquad
\forall\,\varphi_1\in U.
\]

Differentiating three times the logarithm gives
\[
(\log E)'''
=
\frac{E'''}{E}
-
3\frac{E''E'}{E^2}
+
2\frac{(E')^3}{E^3}.
\]

Using the estimates above,
\[
\frac{E'''}{E}
=
\mathcal{O}(1),
\qquad
\frac{E''E'}{E^2}
=
\mathcal{O}(1),
\qquad
\frac{(E')^3}{E^3}
=
\mathcal{O}(1).
\]

Hence
$
(\log E)'''
=
\mathcal{O}(1)
$
uniformly on \(U\).
Combining this with
$
f_b'''
=
(\gamma_bA)'''
-
\frac{1}{\lambda}(\log E)''',
$
we obtain
\[
|f_b'''(\varphi_1)|
\le
C_1+\frac{C_2}{\lambda}.
\]

Since \(\lambda\ll1\), enlarging the constant if necessary yields
$
|f_b'''(\varphi_1)|
\le
\frac{M}{\lambda},
$
for all \(\varphi_1\in U\).
This completes the proof.

\end{proof}

\begin{lemma}[Local quadratic estimates near critical points]
\label{lem:local_quadratic_estimates}

Assume that
$m\ll\lambda\ll1,$ and $b\gtrsim m r_\varepsilon.
$
Let \(c\in C_b\) be a critical point of the reduced map \(f_b\).
Then there exist constants
$
A_1,A_2>0,
 \rho_*>0,
$
independent of \(m\), \(\lambda\), and \(c\), such that for every
\(x\) satisfying $|x-c|<\rho_*,$ one has

\[
A_1\frac{|x-c|}{\lambda}
\le
|f_b'(x)|
\le
A_2\frac{|x-c|}{\lambda},
\]

and

\[
A_1\frac{|x-c|^2}{\lambda}
\le
|f_b(x)-f_b(c)|
\le
A_2\frac{|x-c|^2}{\lambda}.
\]

\end{lemma}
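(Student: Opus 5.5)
The plan is a Taylor expansion of $f_b$ about the critical point $c$. The needed bounds on the second and third derivatives come from Lemma~\ref{lem:nondegeneracy} and Lemma~\ref{lem:third_derivative_bound}.

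\emph{Step 1: two-sided bound on $f_b''(c)$.} The proof of Lemma~\ref{lem:nondegeneracy} gives the lower bound $|f_b''(c)|\ge c/\lambda$. The same expansion yields a matching upper bound. It writes
\[
f_b''(c)=\frac{m[\gamma_b(c)\cos c+\mathcal O(m)]}{\lambda E(c)}+\mathcal O(1).
\]
I would bound $E(c)$ from below by $c_E m$, using the neighbourhood $U$ of Lemma~\ref{lem:third_derivative_bound}. This gives $|f_b''(c)|\le C/\lambda$. So $f_b''(c)$ has a definite sign, and $|f_b''(c)|\in[a/\lambda,\,a'/\lambda]$ with $a,a'$ independent of $m,\lambda,c$.

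\emph{Step 2: bound on $f_b'$.} Taylor's formula with remainder gives
\[
f_b'(x)=f_b''(c)(x-c)+\tfrac12 f_b'''(\zeta)(x-c)^2 .
\]
Lemma~\ref{lem:third_derivative_bound} bounds the remainder by $\frac{M}{2\lambda}|x-c|^2$. Choose $\rho_*\le \min\{a/M,\ \text{radius of }U\}$. For $|x-c|<\rho_*$ the remainder is then at most $\frac{a}{2\lambda}|x-c|$. This yields
\[
\frac{a}{2\lambda}|x-c|\le |f_b'(x)|\le \frac{a'+a/2}{\lambda}|x-c|.
\]

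\emph{Step 3: bound on $f_b(x)-f_b(c)$.} Since $f_b'(c)=0$, we have
\[
f_b(x)-f_b(c)=\tfrac12 f_b''(c)(x-c)^2+\tfrac16 f_b'''(\zeta')(x-c)^3 .
\]
Shrinking $\rho_*$ to $\le \tfrac32 a/M$, the cubic term is at most half of the quadratic one. This gives constants $A_1,A_2$; take $A_1$ as the smaller and $A_2$ as the larger of the constants from Steps 2 and 3. Everything should be done in a lift to $\mathbb R$, so that $f_b(x)-f_b(c)$ is a genuine difference and not taken mod $2\pi$. This is legitimate for $\rho_*$ small relative to $\lambda$.

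\emph{Main obstacle.} The delicate point is uniformity of $\rho_*$: its radius must not shrink with $\lambda$ or $m$. Step 2 needs the third-derivative bound $M/\lambda$ to hold on a ball of fixed radius around $c$. That in turn requires $E\ge c_E m$ on such a ball, which must be checked from $b\gtrsim m r_\varepsilon$ and the localisation $\sin c=\mathcal O(\lambda)$. Near $c\approx\pi$ this is the genuinely sensitive case, since $E$ may be small there. I would first try to absorb it by allowing $c_E$ to depend only on $b/m$ and $r_\varepsilon$. If that fails, I would accept $\rho_*$ depending on these quantities but not on $\lambda$.
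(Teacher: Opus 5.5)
Your proposal is sound given the paper's preceding lemmas, and it is essentially the paper's own proof: Taylor-expand $f_b'$ and $f_b$ about $c$, using $|f_b''(c)|\ge c_0/\lambda$ from Lemma~\ref{lem:nondegeneracy} and $|f_b'''|\le M/\lambda$ from Lemma~\ref{lem:third_derivative_bound}, with $\rho_*$ of order $c_0/M$. Your Step~1 upper bound $|f_b''(c)|\le C/\lambda$ is genuinely needed for $A_2$ and is only used tacitly in the paper, and working in a lift does not require $\rho_*$ to be small relative to $\lambda$.

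The obstacle you flag is real, but the paper shares it. Both arguments take $E\ge c_E m$ near the critical set from Lemma~\ref{lem:third_derivative_bound}, whereas the proof of Theorem~\ref{main2} relies on $b-m\gamma_b(c_2(b))\to 0^+$. Since $|f_b''(c)|\asymp m/(\lambda E(c))$ at the critical point near $\pi$, the constants $A_2$ and $\rho_*$ are uniform only while $b/m$ stays a definite distance above $r_\varepsilon$.
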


\begin{proof}

Let $d:=x-c.$ Since \(c\) is a critical point, then 
$f_b'(c)=0.$
By Lemma \ref{lem:nondegeneracy}, one has:
\[
|f_b''(c)|
\ge
c\lambda^{-1}-\mathcal{O}(1).
\]
Since \(\lambda\ll1\), shrinking \(\lambda\) if necessary, there exists
\(c_0>0\) independent of \(m\) and \(\lambda\) such that
\[
|f_b''(c)|
\ge
\frac{c_0}{\lambda}.
\]
Moreover, by Lemma~\ref{lem:third_derivative_bound},
there exists \(M>0\) such that
\[
|f_b'''(\varphi_1)|
\le
\frac{M}{\lambda}
\]
for all \(\varphi_1\) in a neighbourhood of the critical set. Applying Taylor's theorem to \(f_b'\) at \(c\), we obtain
\[
f_b'(x)
=
f_b''(c)d+R_1(d),
\]
where
$
|R_1(d)|
\le
\frac{M}{2\lambda}|d|^2.
$
Hence
\[
|f_b'(x)|
\ge
\frac{|d|}{\lambda}
\left(
c_0-\frac{M}{2}|d|
\right).
\]
Choose
$\rho_* < \frac{c_0}{M}.$
Then, for every \(|d|<\rho_*\), one has:
\[
\frac{M}{2}|d|
<
\frac{c_0}{2},
\]
and therefore
\[
|f_b'(x)|
\ge
\frac{c_0}{2}
\frac{|d|}{\lambda}.
\]

On the other hand, we may write:
$$
|f_b'(x)|
\le
\frac{|d|}{\lambda}
\left(
|f_b''(c)|
+\frac{M}{2}|d|
\right)
\le
A_2\frac{|d|}{\lambda}
$$
for some constant \(A_2>0\).
This proves the first pair of estimates.
For the map itself, Taylor's theorem gives
\[
f_b(x)-f_b(c)
=
\frac12f_b''(c)d^2+R_2(d),
\]
where
$|R_2(d)|
\le
\frac{M}{6\lambda}|d|^3.
$
Therefore
\[
|f_b(x)-f_b(c)|
\ge
\frac{|d|^2}{\lambda}
\left(
\frac{c_0}{2}
-\frac{M}{6}|d|
\right).
\]
Reducing \(\rho_*\) if necessary, we may assume
\[
\frac{M}{6}|d|
\le
\frac{c_0}{4},
\qquad
|d|<\rho_*.
\]
Hence
\[
|f_b(x)-f_b(c)|
\ge
\frac{c_0}{4}
\frac{|d|^2}{\lambda}.
\]
Similarly,
\[
|f_b(x)-f_b(c)|
\le
A_2\frac{|d|^2}{\lambda}
\]
for a possibly larger constant \(A_2\). Setting
$
A_1
=
\min
\left\{
\frac{c_0}{2},
\frac{c_0}{4}
\right\},
$
the result follows.

\end{proof}

Let $a=a(\lambda,m)$ and $b=b(\lambda,m)$ be non-negative quantities (or maps). We write
$a\asymp b$
if there exist constants $c_1,c_2>0$, independent of
$\lambda$ and $m$, such that
$
c_1 b \le a \le c_2 b.
$

  \begin{lemma}[\textbf{Misiurewicz condition {\rm(c)} --- Binding time}]
\label{lem:WY_c2_detailed}

Assume the hypotheses of
Lemmas~\ref{lem:misiurewicz_a_full},
\ref{lem:misiurewicz_b_full},
\ref{lem:local_quadratic_estimates},
and
\ref{lem:bounded_distortion_refined}.
Let $b\in\Delta_\varrho$
and let $
c\in\mathcal C_b$
be a critical point of \(f_b\).
Then there exist constants
\[
K_0>1,
\qquad
c_*>0,
\qquad
\rho_*>0,
\]
independent of \(m\) and \(\lambda\), such that for every
\[
0<|x-c|<\rho_*\sqrt{\lambda},
\]
there exists an integer
\[
p=p(x)\ge1,
\]
called the binding time, satisfying

\[
\frac1{K_0}
\log\frac{\sqrt{\lambda}}{|x-c|}
\le
p(x)
\le
K_0
\log\frac{\sqrt{\lambda}}{|x-c|},
\]
and
\[
|(f_b^{p})'(x)|
\ge
c_*
e^{\frac13\lambda_0 p}.
\]
\end{lemma}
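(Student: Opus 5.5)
The approach is the classical Collet--Eckmann / Benedicks--Carleson binding argument. Near the critical point we use the quadratic estimates of Lemma~\ref{lem:local_quadratic_estimates}. Along the critical orbit we use the expansion from Lemmas~\ref{lem:misiurewicz_a_full} and~\ref{lem:misiurewicz_b_full}. Bounded distortion (Lemma~\ref{lem:bounded_distortion_refined}) transfers the derivative from the critical orbit to the bound orbit.

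Fix $x$ with $d:=|x-c|<\rho_*\sqrt\lambda$, so Lemma~\ref{lem:local_quadratic_estimates} applies and gives
\[
|f_b(x)-f_b(c)|\asymp d^2/\lambda,\qquad |f_b'(x)|\asymp d/\lambda .
\]
Since $b\in\Delta_\varrho$, the critical orbit $\xi_j=f_b^j(c)$, $j\ge1$, stays outside $C_{\delta_0}$. Hence, by (b1)--(b2),
\[
|(f_b^{j})'(\xi_1)|\ge e^{\lambda_0 j}.
\]
With $\Lambda=e^{\lambda_0}$ as in Lemma~\ref{lem:uniform_expansion}, the derivative is also bounded above by $\Lambda_{\max}^j$, where $\Lambda_{\max}:=\sup|f_b'|$ off $C_{\delta_0}$ is of order $\lambda^{-1}$.

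The binding time is defined as follows. Let $p(x)$ be the largest integer such that
\[
|f_b^{j}(x)-\xi_j|\le \delta_0/2\qquad\text{for }1\le j\le p .
\]
While this holds, $f_b^j(x)$ stays outside $C_{\delta_0/2}$. After a harmless shrinking of $\delta_0$, the whole segment $[f_b(x),\xi_1]$ is then pushed forward on a single monotone branch avoiding the critical zone. Lemma~\ref{lem:bounded_distortion_refined} therefore gives
\[
|f_b^{j}(x)-\xi_j|\asymp |(f_b^{j-1})'(\xi_1)|\,\frac{d^2}{\lambda},
\]
with constants independent of $m,\lambda$. Combining this with the upper and lower exponential bounds on $(f_b^{j-1})'(\xi_1)$, the exit condition $|(f_b^{p})'(\xi_1)|\,d^2/\lambda\asymp\delta_0$ yields
\[
p\asymp \log\frac{\lambda}{d^2}=2\log\frac{\sqrt\lambda}{d}.
\]
This is the two-sided estimate with some $K_0$. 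Since $d<\rho_*\sqrt\lambda$ with $\rho_*$ small, we also get $p\ge1$.

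For the derivative, bounded distortion again gives $|(f_b^{p-1})'(f_b(x))|\asymp|(f_b^{p-1})'(\xi_1)|$. Hence
\[
|(f_b^{p})'(x)|\asymp \frac{d}{\lambda}\,|(f_b^{p-1})'(\xi_1)|\asymp \frac{d}{\lambda}\cdot\frac{\lambda}{d^2}=\frac1d .
\]
Using the upper bound on $p$ from the previous step, $1/d\ge \lambda^{-1/2}e^{p/K_0}$. This exceeds $c_*e^{\lambda_0p/3}$ provided $1/K_0\ge\lambda_0/3$. Equivalently, the loss factor $d/\lambda$ is recovered as the square root of the accumulated expansion $\lambda/d^2$, so $|(f_b^p)'(x)|\gtrsim |(f_b^{p})'(\xi_1)|^{1/2}$. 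We then need $\tfrac12\lambda_0$ to dominate $\tfrac13\lambda_0$ after accounting for the distortion constants.

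The main obstacle is making these constants independent of $m,\lambda$. First, the distortion constant $C_0$ in Lemma~\ref{lem:bounded_distortion_refined} depends on $K_0$ from \eqref{eq:log_derivative_bound}, which Remark~\ref{rem:log_derivative_bound} only bounds by $\mathcal O(1/m)$. I would recover uniformity by using $\Lambda\sim\lambda^{-1}$, so $C_0=K_0/(\Lambda-1)=\mathcal O(\lambda/m)$. This requires controlling the ratio $\lambda/m$, or arguing directly from the explicit form of $f_b$ with sums over binding times. Second, the ratio $\log\Lambda_{\max}/\log\Lambda$ must stay bounded so that the upper and lower bounds for $p$ match, and the comparison $1/K_0\ge\lambda_0/3$ above depends on it. I would handle this by taking $\lambda_0=\log\Lambda$ with $\Lambda\asymp\lambda^{-1}$ from Lemma~\ref{lem:uniform_expansion}, so that both rates are comparable to $\log(1/\lambda)$.
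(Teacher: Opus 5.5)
Your outline is the paper's proof: the binding period is the time the orbit of $x$ shadows the critical orbit, $d_1\asymp d_0^2/\lambda$ comes from Lemma~\ref{lem:local_quadratic_estimates}, bounded distortion is applied on $[f_b(x),\xi_1]$, the exit condition pins down $p$, and the loss $d_0/\lambda$ is recovered as a square root of the accumulated expansion. Your $d_j\asymp|(f_b^{j-1})'(\xi_1)|\,d_0^2/\lambda$ is more careful than the paper's $K^{-1}e^{\lambda_0(j-1)}d_1\le d_j\le Ke^{\lambda_0(j-1)}d_1$, whose upper half does not follow from (b1).

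The closing step, however, needs repair. The route through $K_0$ cannot work: ``$1/K_0\ge\lambda_0/3$'' is incompatible with $K_0>1$ once $\lambda_0\ge 3$, in particular under your normalisation $\lambda_0=\log\Lambda\asymp\log(1/\lambda)$. Your square-root formulation is the right one, and it is what the paper does. Make it precise through the exit condition: $\delta_0/2<d_{p+1}\lesssim\Lambda_{\max}|(f_b^{p-1})'(\xi_1)|\,d_0^2/\lambda$ together with $\lambda\Lambda_{\max}=\mathcal O(1)$ gives
\[
|(f_b^{p})'(x)|\asymp\frac{d_0}{\lambda}\,|(f_b^{p-1})'(\xi_1)|\gtrsim|(f_b^{p-1})'(\xi_1)|^{1/2}\ge e^{\lambda_0(p-1)/2}\ge e^{-\lambda_0/2}\,e^{\lambda_0 p/3}.
\]
Thus $c_*\asymp e^{-\lambda_0/2}$, which is independent of $\lambda$ only if $\lambda_0$ is a fixed rate. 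With $\lambda_0\asymp\log(1/\lambda)$ the estimate genuinely fails for short bindings. Take $d_0$ a suitable constant multiple of $\lambda$: the exit time is $p=1$ and $|f_b'(x)|\asymp d_0/\lambda=\mathcal O(1)$, whereas $e^{\lambda_0/3}\asymp\lambda^{-1/3}$. Lemma~\ref{lem:uniform_expansion} allows any fixed $\lambda_0>0$ once $\lambda$ is small, so keep $\lambda_0$ fixed.

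Your uniformity repairs also need changing.

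\emph{Distortion.} Your bound $C_0=\mathcal O(\lambda/m)$ is large and not uniformly bounded precisely in the regime $m\ll\lambda$, so it buys nothing. The $\mathcal O(1/m)$ bound of Remark~\ref{rem:log_derivative_bound} is simply lossy. Since $f_b''=(\gamma_bA)''-\lambda^{-1}\bigl(E''/E-(E'/E)^2\bigr)$ with $E',E''=\mathcal O(m)$ and $E\ge c_Em$ off $\mathcal C_{\delta_0}$, one gets $f_b''=\mathcal O(\lambda^{-1})$ there. Hence $|f_b''/f_b'|=\mathcal O(1)$ and $C_0=\mathcal O(\lambda)$, and the same computation gives $\Lambda_{\max}\lesssim\lambda^{-1}$.

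\emph{The two-sided bound on $p$.} The exit-time binding period cannot give $K_0$ independent of $\lambda$. By Lemma~\ref{lem:uniform_expansion} every step of the bound orbit expands by at least $K/(2\lambda)$, so $d_p\le\delta_0$ forces
\[
p\le 1+\frac{2\log(\sqrt{\lambda}/d_0)+\mathcal O(1)}{\log(1/\lambda)+\mathcal O(1)}.
\]
Requiring $p\ge K_0^{-1}\log(\sqrt\lambda/d_0)$ for all small $d_0$ then forces $K_0\gtrsim\log(1/\lambda)$. Your ``$p\asymp\log(\lambda/d^2)$'' hides this factor $1/\log(1/\lambda)$.

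The paper's proof has the same defect. If $\lambda_0$ is fixed, its relation $\lambda_0 p=2\log(\sqrt\lambda/d_0)+\mathcal O(1)$ rests on the unjustified upper bound for $d_j$. If $\lambda_0=\log\Lambda$, that relation yields $K_0\asymp\lambda_0$. What the method actually proves is the derivative bound with uniform $c_*$ (for fixed $\lambda_0$), and the bounds on $p$ with $K_0\asymp\log(1/\lambda)$.
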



\begin{proof}
Let $d_0:=|x-c|.$
Since $ b\in\Delta_\varrho,$ Lemma~\ref{lem:misiurewicz_a_full} implies

\[
f_b^j(c)\notin\mathcal C_{\delta_0},
\qquad
j\ge1.
\]

Define
$
d_1:=|f_b(x)-f_b(c)|
$
and, for \(j\ge1\),
$
d_j:=|f_b^j(x)-f_b^j(c)|.
$
The binding time \(p=p(x)\) is defined as the largest
integer such that

\[
d_j\le\delta_0,
\qquad
1\le j\le p.
\]

By Lemma~\ref{lem:local_quadratic_estimates}, one has
$ d_1
\approx \frac{d_0^2}{\lambda}.$
Since the critical value orbit avoids
\(\mathcal C_{\delta_0}\),
there exists

\[
\eta_0
:=
\inf_{j\ge1}
d\!\left(f_b^j(c),\mathcal C_b\right)
>
2\delta_0,
\]

after reducing \(\delta_0\) if necessary.
By definition of the binding period, we have:
\[
d_j
=
|f_b^j(x)-f_b^j(c)|
\le
\delta_0,
\qquad
1\le j\le p.
\]

Therefore,

\[
d\!\left(f_b^j(x),\mathcal C_b\right)
\ge
\eta_0-\delta_0
\ge
\delta_0,
\qquad
1\le j\le p.
\]
Hence
$f_b^j(x)\notin\mathcal C_{\delta_0},$  for all  $1\le j\le p.$
Thus all orbit segments involved in the binding period
remain in the uniformly expanding region, and
Lemmas~\ref{lem:misiurewicz_b_full}
and~\ref{lem:bounded_distortion_refined}
apply throughout the entire binding period.
For \(1\le j\le p\), the Mean Value Theorem yields

\[
d_j
=
|(f_b^{j-1})'(z_j)|\,d_1
\]

for some point \(z_j\) between
\(f_b(x)\) and \(f_b(c)\).
Combining bounded distortion with the expansion estimate
of Lemma~\ref{lem:misiurewicz_b_full}, there exists a
constant \(K\ge1\), independent of \(j\), \(m\), and
\(\lambda\), such that

\[
K^{-1}
e^{\lambda_0(j-1)}
d_1
\le
d_j
\le
K
e^{\lambda_0(j-1)}
d_1,
\qquad
1\le j\le p.
\]

By maximality of \(p\), one has $
d_p\le\delta_0,$ and $d_{p+1}>\delta_0.$ 
Hence
$
\delta_0
\asymp
e^{\lambda_0 p}d_1.
$
Using
$
d_1
\asymp
\frac{d_0^2}{\lambda},
$
we obtain
$
\delta_0
\asymp
e^{\lambda_0 p}
\frac{d_0^2}{\lambda}.$
Equivalently,
$
e^{\lambda_0 p}
\asymp
\delta_0
\frac{\lambda}{d_0^2}.
$
Since \(\delta_0\) is fixed, it can be absorbed into
the comparison constants, yielding
$
e^{\lambda_0 p}
\asymp
\frac{\lambda}{d_0^2}.
$
Taking logarithms gives
$
\lambda_0 p
=
2\log\frac{\sqrt{\lambda}}{d_0}
+
\mathcal{O}(1).
$
Therefore, for
$
d_0<\rho_*\sqrt{\lambda},
$
with \(\rho_*>0\) sufficiently small, there exists
\(K_0>1\) such that
\[
\frac1{K_0}
\log\frac{\sqrt{\lambda}}{d_0}
\le
p(x)
\le
K_0
\log\frac{\sqrt{\lambda}}{d_0}.
\]
This proves the binding-time estimate.
It remains to estimate the derivative accumulated during
the binding period.
By the Chain Rule, one has:
$$
|(f_b^p)'(x)|
=
|f_b'(x)|
\,
|(f_b^{p-1})'(f_b(x))|.
$$
By Lemma~\ref{lem:local_quadratic_estimates}, it is possible to conclude that
$
|f_b'(x)|
\asymp
\frac{d_0}{\lambda}.
$
Moreover, bounded distortion gives
\[
|(f_b^{p-1})'(f_b(x))|
\ge
C_{\mathrm{dist}}^{-1}
|(f_b^{p-1})'(f_b(c))|
\]
for some constant
\(C_{\mathrm{dist}}\ge1\).
Since the critical value orbit avoids
\(\mathcal C_{\delta_0}\),
Lemma~\ref{lem:misiurewicz_b_full} yields
\[
|(f_b^{p-1})'(f_b(c))|
\ge
e^{\lambda_0(p-1)}.
\]
Therefore
$
|(f_b^p)'(x)|
\ge
C_1
\frac{d_0}{\lambda}
e^{\lambda_0 p},
$
for some constant \(C_1>0\).
Using
$
e^{\lambda_0 p}
\asymp
\frac{\lambda}{d_0^2},
$
we deduce that:
$
d_0
\asymp
\sqrt{\lambda}\,
e^{-\frac12\lambda_0 p}.
$
Substituting into the previous estimate yields
$
|(f_b^p)'(x)|
\ge
C_2
\lambda^{-1/2}
e^{\frac12\lambda_0 p}
$
for some constant \(C_2>0\).
Since \(0<\lambda<1\),
$
\lambda^{-1/2}\ge1,
$
and therefore
$
|(f_b^p)'(x)|
\ge
C_2
e^{\frac12\lambda_0 p}.
$
Hence
$$
|(f_b^p)'(x)|
\ge
\Bigl(
C_2e^{\frac16\lambda_0 p}
\Bigr)
e^{\frac13\lambda_0 p}.
$$
Since
$
p(x)
\asymp
\log\frac{\sqrt{\lambda}}{d_0},
$
one has
$
p(x)\to+\infty
$ as $ d_0\to0.
$
Shrinking \(\rho_*\) if necessary, we may assume
$
p(x)\ge p_*
$
for some sufficiently large constant \(p_*>0\).
Define
$
c_*
:=
C_2
e^{\frac16\lambda_0 p_*}.
$
Then
$
C_2e^{\frac16\lambda_0 p}
\ge
c_*,
$
for every admissible binding time \(p\). Therefore,
$
|(f_b^p)'(x)|
\ge
c_*
e^{\frac13\lambda_0 p}.
$
This proves condition {\rm(c2)} of Definition~\ref{def-Misiur}.

\end{proof}

\section{Proof of Theorem \ref{main1}}
\label{proof_main_1}

To prove Theorem \ref{main1}, we verify the conditions \textbf{(H1)--(H5)} of the Wang--Young Criterion reviewed in Theorem \ref{thm:WY}. Throughout the following results, we assume   $ 0<m \ll \lambda\ll 1$ and $b \gtrsim m r_\varepsilon$. \\

\begin{lemma}[Transverse contraction and Invariant Graph \textbf{(H1)--(H2)}]
There exists $\kappa \in (0,1)$ such that 
\[ 
\left| \frac{\partial \mathcal{G}_b}{\partial r_1}(r_1, \varphi_1) \right| \le \kappa, \quad \text{for all } (r_1, \varphi_1) \in I \times \mathbb{S}^1,
\]
and for each $b \in [b_0, b_1]$, there exists a unique $C^3$ invariant graph $r_1 = \gamma_b(\varphi_1) \in C^3(\mathbb{S}^1)$ such that
 $$
\gamma_b( \mathcal{F}_b(\gamma_b(\varphi_1), \varphi_1)) = \mathcal{G}_b(\gamma_b(\varphi_1), \varphi_1), \quad\text{for all }   \varphi_1 \in \mathbb{S}^1
.$$
\end{lemma}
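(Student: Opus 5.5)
The plan is to obtain both assertions directly from Section~\ref{s: invariant curve}, since the statement is essentially the conjunction of Lemma~\ref{lem:radial_contraction} and Lemma~\ref{thm:invariant_graph}, restricted to a compact parameter interval $[b_0,b_1]\subset(mr_\varepsilon,1)$.

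The first step is to fix $[b_0,b_1]$ with $b_0>mr_\varepsilon$. I will then choose the trapping annulus $D=I\times\mathbb S^1$, shrinking $r_{\max}-r_{\min}$ if necessary, so that there is $\eta>0$ with
\[
z_2=b+mr_1\cos\varphi_1\ge b_0-mr_{\max}\ge\eta>0
\]
on $[b_0,b_1]\times D$. This is possible because $r_{\min}=r_\varepsilon$ and $b_0-mr_\varepsilon>0$. Positive invariance $\mathcal R_b(D)\subset D$ follows from Remark~\ref{dig_remarks1}(6) together with Proposition~\ref{thm:0}.

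With the uniform lower bound on $z_2$ in hand, the derivative formula from the proof of Lemma~\ref{lem:radial_contraction} gives
\[
\left|\partial_{r_1}\mathcal G_b\right|=\frac{m}{\lambda}\,|\alpha-\varepsilon|\,|\cos\varphi_1|\,z_2^{2\varepsilon/\lambda-1}u^{-3/2}\le C\frac{m}{\lambda}.
\]
Here $u\ge c_0>0$ as in Lemma~\ref{Lemma Aux1}(1), and $z_2^{2\varepsilon/\lambda-1}\le\eta^{-1}$. Since $m\ll\lambda$, I can take $\kappa:=Cm/\lambda<1$, which proves \textbf{(H1)} uniformly in $b\in[b_0,b_1]$.

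For \textbf{(H2)}, I will apply the fibre contraction (graph transform) argument of Lemma~\ref{thm:invariant_graph}. The ingredients are that $\mathcal R_b$ extends $C^3$ to a neighbourhood of $D$ (because $z_2\ge\eta$), that the fibres are uniformly contracted by $\kappa$, and that $D$ is positively invariant. Under these conditions the graph transform is a contraction on the space of Lipschitz graphs $\varphi_1\mapsto\gamma(\varphi_1)\in I$. Its unique fixed point $\gamma_b$ satisfies the stated invariance equation, and by \cite[Ch.~3]{HPS} it is $C^3$.

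The main obstacle is the $C^3$ regularity. It requires the domination condition that the fibre contraction $\kappa$ be smaller than $\bigl(\inf|\partial_{\varphi_1}\mathcal F_b|\bigr)^{j}$ for $j\le3$. This is delicate because $f_b$ has critical points, so the base map is not expanding everywhere. I would first try to invoke Lemma~\ref{thm:invariant_graph} exactly as stated, since the paper has already asserted $\gamma_b\in C^3(\mathbb S^1)$. Failing that, I would argue through the $\mathcal O(m)$-perturbation structure: for $m=0$ the graph transform has the constant fixed point $r^*(b)$, which is smooth. The $C^3$ derivatives of $\mathcal G_b-\mathcal G_b^{(0)}$ are $\mathcal O(m/\lambda)$, so the derivative cocycle equations for $\gamma_b',\gamma_b'',\gamma_b'''$ are contractions in sup norm once $m/\lambda$ is small enough. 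That would give $\gamma_b\in C^3$ with $\|\gamma_b-r^*(b)\|_{C^3}=\mathcal O(m)$.
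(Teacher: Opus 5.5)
Your plan follows the paper's own route: \textbf{(H1)} from Lemma~\ref{lem:radial_contraction}, and \textbf{(H2)} from the HPS fibre-contraction argument behind Lemma~\ref{thm:invariant_graph}. The \textbf{(H1)} part is fine. The gap is in \textbf{(H2)}, and it is more basic than the $C^3$ domination issue you flag: the graph transform is not even defined on graphs over $\mathbb S^1$. For any continuous $\gamma\colon\mathbb S^1\to I$, the induced base map $\varphi_1\mapsto\mathcal F_b(\gamma(\varphi_1),\varphi_1)$ has the continuous $2\pi$-periodic real lift $\gamma(\varphi_1)A(\varphi_1)-\lambda^{-1}\ln\bigl(b+m\gamma(\varphi_1)\cos\varphi_1\bigr)$. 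So it has degree $0$ and is never injective. Geometrically, $\Psi$ sends the core circle of $\mathcal A$ to a contractible ellipse in $\Sigma_{in}$, cf.\ Remark~\ref{dig_remarks1}. Hence $\mathcal R_b(\operatorname{graph}\gamma)$ folds back over itself and is not a graph. The invariant-section theorem of \cite{HPS} needs the base map to be a homeomorphism (and, for smoothness, expansion dominating $\kappa$), so it fails already at the $C^0$ level.

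In fact the conclusion of \textbf{(H2)} is false as stated. First, $\mathcal R_b$ is injective on $D$; the paper itself calls it an embedding in Theorem~\ref{thm:WY}. Indeed, $\mathcal G_b$ is strictly increasing in $z_2=b+mr_1\cos\varphi_1$, so equal images force equal $z_2$. Then $\mathcal F_b$ fixes $r_1A(\varphi_1)$ modulo $2\pi$, hence exactly, since $|r_1A(\varphi_1)|<\sqrt2$. Finally, the linear part in \textbf{(P4)} is invertible. Now suppose $\gamma$ satisfied $\gamma\circ f=\mathcal G_b(\gamma,\cdot)$ with $f=\mathcal F_b(\gamma,\cdot)$. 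Choose $\varphi\neq\varphi'$ with $f(\varphi)=f(\varphi')$; then $\mathcal R_b(\gamma(\varphi),\varphi)=(\gamma(f(\varphi)),f(\varphi))=\mathcal R_b(\gamma(\varphi'),\varphi')$, contradicting injectivity. Equivalently, an embedding restricted to an invariant circle is a circle homeomorphism, which cannot have the non-degenerate critical points of Lemma~\ref{lem:nondegeneracy}.

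So quoting Lemma~\ref{thm:invariant_graph} does not close the gap: its proof is the same HPS invocation, and the paper's one-line proof inherits the defect. Your perturbative fallback fails for the same reason. At $m=0$ the circle $r_1=r^*(b)$ is invariant only because $\mathcal R_b$ collapses all of $D$ onto it. Moreover, the differentiated equation $\gamma'(f(\varphi_1))\,f'(\varphi_1)=\partial_{r_1}\mathcal G_b\,\gamma'(\varphi_1)+\partial_{\varphi_1}\mathcal G_b$ can only be solved for $\gamma'$ by inverting $f$ and dividing by $f'$, which vanishes at the critical points. What this family actually has is a Wang--Young singular limit rather than an invariant curve. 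For instance, with $b=m\beta$ and $\beta>r_{\max}$, one has $\mathcal G_b\to r_\varepsilon$ in $C^3$ as $m\to0$, while $\mathcal F_b$ is independent of $m$ up to the rotation by $-\lambda^{-1}\ln m$. \textbf{(H2)} has to be replaced by that structure.
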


\begin{proof}
The result follows directly from the estimates provided in Lemma \ref{lem:radial_contraction} regarding the contraction of the radial component and Lemma \ref{thm:invariant_graph} concerning the existence and regularity of the invariant curve. \\
\end{proof}

\begin{lemma}[Misiurewicz dynamics \textbf{(H3)}]  Fix $\varrho> 0$ small. 
Let $f_b(\varphi_1) = \mathcal{F}_b( \gamma_b(\varphi_1), \varphi_1)$ be the reduced one-dimensional map defined in \eqref{f_b def}. Then, there exists a set    $\Delta_\varrho \subset  [mr_\varepsilon, mr_\varepsilon+\varrho]$ with positive Lebesgue measure such that, for every $b \in \Delta_\varrho$, $f_b$ is a Misiurewicz map according to Definition \ref{def-Misiur}.\\
\end{lemma}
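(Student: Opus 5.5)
This lemma is essentially an assembly of the four verifications carried out in Section~\ref{s: f_b is M}. We fix $\varrho>0$ small and take $\delta_0>0$ from Lemma~\ref{lem:uniform_expansion}. We then let $\Delta_\varrho\subset[mr_\varepsilon,mr_\varepsilon+\varrho]$ be the positive-measure parameter set produced by the exclusion argument of Lemma~\ref{lem:misiurewicz_a_full}. For $b\in\Delta_\varrho$ we check conditions (a), (b1)--(b2) and (c1)--(c2) of Definition~\ref{def-Misiur} one at a time. All constants ($\delta_0$, $\lambda_0$, $c_0$, $K_0$) must be chosen uniformly in $b\in\Delta_\varrho$, so that a single $\delta_0$ serves all of them.

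\textbf{Step 1 (condition (a)).} Lemma~\ref{lem:misiurewicz_a_full} gives $f_b^n(\widehat\varphi_1^{(i)}(b))\notin C_{\delta_0}$ for all $n\ge1$. Definition~\ref{def-Misiur}(a) instead asks for $d(f^n(\hat x),C)>2\delta_0$. We obtain this by running the exclusion with $C_{2\delta_0}$ in place of $C_{\delta_0}$. Equivalently, we replace $\delta_0$ by $\delta_0/2$ in the remaining conditions. This is legitimate because Lemma~\ref{lem:uniform_expansion} holds for any fixed $\delta_0$ independent of $\lambda$, after shrinking $\lambda$ if needed.

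\textbf{Step 2 (condition (b)).} Conditions (b1)--(b2) are exactly Lemma~\ref{lem:misiurewicz_b_full}, with $\lambda_0=\log\Lambda$, $c_0=1$ and $M_0=1$.

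\textbf{Step 3 (condition (c1)).} Here we need $f_b''\neq0$ on all of $C_{\delta_0}$, not only at the critical points. Lemma~\ref{lem:nondegeneracy} gives $|f_b''(c)|\ge c/\lambda$ at each critical point $c$. Lemma~\ref{lem:third_derivative_bound} gives $|f_b'''|\le M/\lambda$ near the critical set. The mean value theorem then yields $|f_b''(x)|\ge (c-M\delta_0)/\lambda>0$ whenever $|x-c|<\delta_0$ with $\delta_0<c/(2M)$. This forces one more shrinking of $\delta_0$ at the outset, before the exclusion, so that the choice stays consistent.

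\textbf{Step 4 (condition (c2)).} We use the binding estimate of Lemma~\ref{lem:WY_c2_detailed}. It is formulated for $|x-c|<\rho_*\sqrt\lambda$ with $\log(\sqrt\lambda/|x-c|)$, whereas Definition~\ref{def-Misiur} uses $\log(1/d(x,C))$ on all of $C_{\delta_0}$.

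\emph{Inner zone.} For $|x-c|\le\rho_*^2\lambda$, say, we have $\log(\sqrt\lambda/d)\ge\tfrac12\log(1/d)$. The upper bound $\log(\sqrt\lambda/d)\le\log(1/d)$ is automatic. So after enlarging $K_0$ the bounds on $p$ translate directly.

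\emph{Annulus.} The zone $\rho_*^2\lambda\le|x-c|<\delta_0$ needs a separate argument. Lemma~\ref{lem:local_quadratic_estimates} gives $|f_b'(x)|\gtrsim |x-c|/\lambda\gtrsim\rho_*^2$, and $f_b(x)$ lies within $O(\delta_0^2/\lambda)$ of the critical value. We would define $p(x)$ as the time $f_b(x)$ stays $\delta_0$-close to the critical orbit, which is bounded by a constant times $\log(1/d)$. Lemma~\ref{lem:misiurewicz_b_full} then supplies the exponential factor. Alternatively, we would simply set $p(x)=1$ where $|f_b'(x)|$ is already large. In either case $K_0$ and $c_0$ are adjusted once more.

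The main obstacle is this matching of the scales in (c2). In particular, near $|x-c|\sim\delta_0$ the ratio $|x-c|/\lambda$ may exceed what $\delta_0$ permits, since $f_b(x)$ can leave the $\delta_0$-tube at once. The fallback for such $x$ is to take $p=1$ and use $|f_b'(x)|\ge A_1|x-c|/\lambda\ge c_0^{-1}e^{\lambda_0/3}$ for $\lambda$ small.
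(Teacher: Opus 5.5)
Your skeleton is the paper's own proof. $\Delta_\varrho$ is taken from Lemma~\ref{lem:misiurewicz_a_full}, and (a), (b), (c1), (c2) are read off from Lemmas~\ref{lem:misiurewicz_a_full}, \ref{lem:misiurewicz_b_full}, \ref{lem:nondegeneracy} and \ref{lem:WY_c2_detailed}. Your patches address real mismatches that the paper passes over: the $2\delta_0$ margin in (a), (c1) on all of $C_{\delta_0}$ rather than only at critical points, and (c2) outside $|x-c|<\rho_*\sqrt{\lambda}$ on the scale $\log(1/d(x,C))$.

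The genuine gap is the input to Step~1, which you inherit from Lemma~\ref{lem:misiurewicz_a_full} exactly as the paper does. That exclusion cannot produce a positive-measure set of parameters whose critical orbits never enter a \emph{fixed} neighbourhood $C_{\delta_0}$. Take a surviving component $V$ of $B_{n-1}$. Apart from end pieces, $\xi_{n-1}(V)$ is a whole arc between the two critical neighbourhoods. Since $|f_b'|\ge\Lambda\gg1$ there, $\xi_n|_V$ wraps $V$ around $\EU^1$ many times. With the bounded distortion asserted in that lemma, the proportion of $V$ sent into $C_{\delta_0}$ is then bounded \emph{below} by $c\,\delta_0$, uniformly in $n$. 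The estimate $C\delta_0L^{-(n-1)}\operatorname{Leb}(V)$ used there treats the absolute bound $|C_{\delta_0}|/\inf|\xi_n'|$ as a relative one; that bound would only be valid if $\xi_n|_V$ were injective. Hence the proportions $\varepsilon_n$ do not tend to $0$, $\prod_n(1-\varepsilon_n)=0$, and the surviving set is Lebesgue-null. Re-running the exclusion with $C_{2\delta_0}$, or taking a countable union over $\delta_0=1/k$, does not change this. This matches the familiar picture: Misiurewicz parameters are typically a null set (as in the quadratic family). Benedicks--Carleson exclusions reach positive measure only by removing approaches closer than $e^{-\alpha n}$, and the surviving maps are in general not Misiurewicz. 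So the real obstacle is Step~1. The scale matching in (c2) is comparatively benign for a single map, because $K_0$ may then depend on $\lambda$.

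What is true, and all that \textbf{(H3)} of Theorem~\ref{thm:WY} requires, is a \emph{single} parameter $b^*$ with $f_{b^*}$ Misiurewicz, with $\delta_0$ allowed to depend on $b^*$. The positive-measure set is the output of the rank-one theorem, not of (H3). Even one such $b^*$ needs its own argument, since two critical orbits must be controlled with one parameter; for instance, one could try to place both critical values in the expanding Cantor set of points whose orbits avoid $C_{2\delta_0}$.

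Your Step~3 runs into the same non-uniformity. Lemma~\ref{lem:third_derivative_bound} assumes $E\ge c_Em$ near the critical set. But at the critical point $\hat\varphi$ near $\pi$ one has $E(\hat\varphi)\approx b-m\gamma_b(\hat\varphi)\to0$ as $b\to(mr_\varepsilon)^+$, which is precisely what drives the proof of Theorem~\ref{main2}. Write $E\approx E(\hat\varphi)+\tfrac12 m\gamma_b s^2$, with $s$ the distance to $\hat\varphi$. To leading order $(\log E)''$ has the sign of $E(\hat\varphi)-\tfrac12 m\gamma_b s^2$, and $\lambda^{-1}|(\log E)''|\gg1$ on both sides. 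So $f_b''$ changes sign at $s$ of order $\sqrt{E(\hat\varphi)/m}$, and (c1) on $C_{\delta_0}$ forces $\delta_0\lesssim\sqrt{E(\hat\varphi)/m}$. This is harmless for a single $b^*$. It does, however, rule out the single $\delta_0$ for all $b$ near the threshold that your Step~1 requires.
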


\begin{proof}
The proof of this result is established by the combination of the following components: \\
\begin{itemize}
    \item Lemma \ref{lem:misiurewicz_a_full}, which satisfies the orbit exclusion condition (a);
    \item Lemma \ref{lem:misiurewicz_b_full}, which ensures uniform expansion outside the critical neighborhood (conditions (b1) and (b2));
    \item Lemma \ref{lem:nondegeneracy}, which proves the non-degeneracy of the critical points (condition (c1));
    \item Lemma \ref{lem:WY_c2_detailed}, which provides the derivative recovery estimate after the binding period (condition (c2)).
\end{itemize}
The set $\Delta_\varrho \subset  [mr_\varepsilon, mr_\varepsilon+\varrho]$ is given in  Lemma \ref{lem:misiurewicz_a_full}.\\
\end{proof}

\begin{lemma}[Parameter transversality \textbf{(H4)}]
\label{lem:SHb_fb}
Let $\hat{\varphi}_1(b)$ be a critical point of $f_b$, and let $\xi(b) = f_b(\hat{\varphi}_1(b))$ be its corresponding critical value. Then, the map $f_b$ satisfies the transversality condition:
\[
\left| \frac{d\xi}{db} (b)\right| >   0.
\]

\end{lemma}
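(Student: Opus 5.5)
The plan is to derive (H4) from the quantitative estimate of Lemma~\ref{lem:transversality}. The only new ingredient is checking that its hypotheses hold for the critical points produced in the previous sections.

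First, by Lemma~\ref{lem:nondegeneracy}, every critical point $\hat\varphi_1(b^*)$ of $f_{b^*}$ is non-degenerate, with $|f_{b^*}''(\hat\varphi_1)|\ge c/\lambda>0$. The map $(b,\varphi_1)\mapsto f_b'(\varphi_1)$ is $C^1$, since $f_b$ is built from $\gamma_b$ and $b\mapsto\gamma_b$ is $C^1$ into $C^3(\mathbb S^1)$ by Lemma~\ref{thm:invariant_graph}. The implicit function theorem, applied to $f_b'(\varphi_1)=0$, then yields a $C^1$ continuation $b\mapsto\hat\varphi_1(b)$ near $b^*$. This makes $\xi(b)=f_b(\hat\varphi_1(b))$ a $C^1$ function.

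Second, I differentiate $\xi(b)$ with the chain rule. The term $f_b'(\hat\varphi_1)\,\hat\varphi_1'(b)$ vanishes, so $\xi'(b)=\partial_b f_b(\hat\varphi_1(b))$. Computing $\partial_b f_b$ from \eqref{f_b def} gives
\[
\xi'(b)=(\partial_b\gamma_b)A-\frac{1+m(\partial_b\gamma_b)\cos\hat\varphi_1}{\lambda\,E_b(\hat\varphi_1)} .
\]
I then bound the two pieces separately. The first is $\mathcal O(1)$ because $\|\partial_b\gamma_b\|_{C^0}\le C_\gamma$. For the second, I choose $m$ small enough that $mC_\gamma\le 1/2$, so the numerator is at least $1/2$. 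In the regime $b\gtrsim mr_\varepsilon$ we also have $0<E_b\le (C_1+r_{\max})m$. Together these give $|\xi'(b)|\ge c_1/(\lambda m)-C_0\ge \tau/(\lambda m)>0$ for $m,\lambda$ small, which is exactly Lemma~\ref{lem:transversality}.

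The main obstacle is uniformity. The bound on $\partial_b\gamma_b$ in Lemma~\ref{thm:invariant_graph} is stated only on compact intervals $B$ that stay away from the logarithmic singularity. I would handle this by fixing $b^*\in\Delta_\varrho$ and working on a small compact interval around $b^*$ on which $E_b\ge\eta>0$. This is possible because $b^*>mr_\varepsilon$, and the Misiurewicz parameters constructed in Lemma~\ref{lem:misiurewicz_a_full} are of this kind. On that interval $C_\gamma$ is finite and the argument above goes through. The resulting estimate is $|\xi'(b^*)|\ge\tau/(\lambda m)$, and in particular $\frac{d}{db}\big|_{b=b^*}\xi\neq0$, as (H4) requires. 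I would apply the same argument to each of the (at most two) critical points, which lie near $0$ and $\pi$.
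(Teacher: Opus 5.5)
Your proposal is correct and follows the paper's route. The paper proves \textbf{(H4)} simply by invoking Lemma~\ref{lem:transversality}, whose proof is exactly your reduction $\xi'(b)=\partial_b f_b(\hat\varphi_1(b))$ followed by comparing the bounded term $(\partial_b\gamma_b)A$ against the logarithmic term of size $1/(\lambda m)$. Your implicit-function-theorem step usefully supplies the $C^1$ continuation of $\hat\varphi_1(b)$ that the lemma just assumes.

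One caution on your last paragraph. Localizing around $b^*$ only makes $C_\gamma$ finite, whereas your estimate needs the smallness $mC_\gamma\le\frac12$ at an $m$ that is already fixed. Also, the uniform bound $b+mr_1\cos\varphi_1\ge\eta$ on the trapping region, required by Lemma~\ref{thm:invariant_graph}, needs $b^*>mr_{\max}$, not merely $b^*>mr_\varepsilon$. So your argument still rests, exactly as the paper's does, on the $m$-independence of $C_\gamma$ asserted in Lemma~\ref{thm:invariant_graph}.
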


\begin{proof}
The result follows from the parameter derivative analysis detailed in Lemma \ref{lem:transversality}.\\
\end{proof}

\begin{lemma}[Transversal Non-degeneracy \textbf{(H5)}]
\label{lem:transversal_non_deg_var}
Let $F_b(r_1, \varphi_1)$ be the angular component of $\mathcal{R}_b$ and $\gamma_b(\varphi_1)$ be the invariant curve of Lemma \ref{thm:invariant_graph}. Let $\hat{\varphi}_1$ be a critical point of $f_b(\varphi_1)=\mathcal{F}_b(\gamma_b(\varphi_1), \varphi_1)$. In the transversal coordinates $y=r_1-\gamma_b(\varphi_1)$, the map $$\tilde{\mathcal{F}}_b(y, \varphi_1 )=\mathcal{F}_b(\gamma_b(\varphi_1)+y, \varphi_1)$$ satisfies:
\[
\frac{\partial \tilde F_b}{\partial y}(0, \hat{\varphi}_1)\neq 0.
\]
\end{lemma}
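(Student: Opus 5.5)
Since $y=r_1-\gamma_b(\varphi_1)$ is a fibrewise translation, the chain rule gives at once
\[
\frac{\partial \tilde{\mathcal F}_b}{\partial y}(0,\hat\varphi_1)=\frac{\partial \mathcal F_b}{\partial r_1}\bigl(\gamma_b(\hat\varphi_1),\hat\varphi_1\bigr),
\]
so the claim reduces to showing that the right-hand side does not vanish. We compute it explicitly from \eqref{def_Rb}:
\[
\frac{\partial \mathcal F_b}{\partial r_1}(r_1,\varphi_1)=A(\varphi_1)-\frac{1}{\lambda}\,\frac{m\cos\varphi_1}{b+mr_1\cos\varphi_1},
\]
with $A(\varphi_1)=a_{11}\cos\varphi_1+a_{12}\sin\varphi_1$. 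At $r_1=\gamma_b(\hat\varphi_1)$ the denominator is exactly $E(\hat\varphi_1)>0$.

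The key step is to show that the logarithmic term dominates, and we use the localisation of critical points from the proof of Lemma~\ref{lem:nondegeneracy}. That proof gives $\sin\hat\varphi_1=\mathcal O(\lambda)$, hence $|\cos\hat\varphi_1|\ge \tfrac12$ for $\lambda$ small. It also gives $0<E(\hat\varphi_1)\le Cm$, using $b\gtrsim mr_\varepsilon$ together with $\gamma_b\le r_{\max}$. Therefore
\[
\left|\frac{1}{\lambda}\frac{m\cos\hat\varphi_1}{E(\hat\varphi_1)}\right|\ge \frac{m/2}{\lambda\, C m}=\frac{1}{2C\lambda}.
\]
On the other hand, $|A(\hat\varphi_1)|\le |a_{11}|+|a_{12}|<2$ by \textbf{(P4)}. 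Hence
\[
\left|\frac{\partial \tilde{\mathcal F}_b}{\partial y}(0,\hat\varphi_1)\right|\ge \frac{1}{2C\lambda}-2>0
\]
once $\lambda$ is sufficiently small, which is exactly the standing assumption $m\ll\lambda\ll1$. In fact the derivative is of order $\lambda^{-1}$.

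The only delicate point is the upper bound $E(\hat\varphi_1)\le Cm$ with $C$ independent of $\lambda$. It is this bound that makes the singular term win, and it relies on staying in the singular regime $b\le C_2 m$ recorded in the Remark preceding Lemma~\ref{lem:nondegeneracy}. If instead one worked on a compact $B$ far from the threshold, one would only have $E\le 1$. The lower bound would then be $m/(2\lambda)$, which is small, and non-vanishing would no longer be automatic. So I would state explicitly that $b$ ranges in $[mr_\varepsilon, mr_\varepsilon+\varrho]$ with $\varrho=\mathcal O(m)$, and cite the $\mathcal O(\lambda)$-localisation of critical points near $0$ or $\pi$ established in Lemma~\ref{lem:nondegeneracy}.
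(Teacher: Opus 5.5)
Your proposal is correct and follows essentially the same route as the paper. You use the chain rule to reduce the claim to $\partial_{r_1}\mathcal F_b(\gamma_b(\hat\varphi_1),\hat\varphi_1)$ and bound the linear term $A(\hat\varphi_1)$ uniformly; then the localisation of critical points near $0$ or $\pi$ (Lemma~\ref{lem:nondegeneracy}), together with $E(\hat\varphi_1)\le Cm$ in the regime $b\approx m$, makes the logarithmic term of order $\lambda^{-1}$ dominate. The differences are cosmetic: the paper bounds $|A|$ by $\sqrt{a_{11}^2+a_{12}^2}$ rather than $|a_{11}|+|a_{12}|$, and your remark that the argument requires $b\le C_2m$ simply states explicitly what the paper writes as ``$b\approx m$''.
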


\begin{proof}
The variable $y$ represents the radial deviation from the invariant curve $\gamma_b$. By the chain rule, evaluating the partial derivative of the shifted map $\tilde F_b$ with respect to $y$ at $y=0$ is equivalent to computing the radial derivative of $F_b$ at the point $(\gamma_b(\varphi_1), \varphi_1)$:
\[
\frac{\partial \tilde F_b}{\partial y}(0, \varphi_1) = \frac{\partial F_b}{\partial r_1}(\gamma_b(\varphi_1), \varphi_1).
\]

Recall the explicit expression for the angular component of the return map:
\[
F_b(r_1, \varphi_1) = r_1 (a_{11}\cos\varphi_1+a_{12}\sin\varphi_1) - \frac{1}{\lambda}\ln(b+m r_1\cos\varphi_1).
\]
Differentiating $F_b$ with respect to the radial coordinate $r_1$ yields:
\[
\frac{\partial F_b}{\partial r_1}( r_1, \varphi_1) = (a_{11}\cos\varphi_1+a_{12}\sin\varphi_1) - \frac{1}{\lambda} \left( \frac{m\cos\varphi_1}{b+m r_1\cos\varphi_1} \right).
\]

We evaluate this derivative at the critical point $(\gamma_b(\hat{\varphi}_1), \hat{\varphi}_1)$. By the definition of the return map's domain, the argument of the logarithm remains strictly positive. To show that the derivative does not vanish, we analyse the magnitudes of the following two terms: \\
\begin{enumerate}
    \item The linear term $T_{\text{lin}} = a_{11}\cos\hat{\varphi}_1+a_{12}\sin\hat{\varphi}_1$ is bounded by $M = \sqrt{a_{11}^2 + a_{12}^2}$, which is independent of $\lambda$ and $m$. Thus, $|T_{\text{lin}}| \le M$. \\
    
    \item For the logarithmic term $T_{\text{log}} = \frac{1}{\lambda} \frac{m \cos \hat{\varphi}_1}{b + m \gamma_b(\hat{\varphi}_1) \cos \hat{\varphi}_1}$, we note that from the critical point equation $f_b'(\hat{\varphi}_1) = 0$, $\hat{\varphi}_1$ must be close to $0$ or $\pi$ (cf. Lemma \ref{lem:nondegeneracy}). Consequently, $|\cos \hat{\varphi}_1|$ is bounded away from zero; there exists $c_* > 0$ such that $|\cos \hat{\varphi}_1| \ge c_*$. Since $\gamma_b(\hat{\varphi}_1)$ is uniformly bounded, the denominator is $\mathcal{O}(b)$ (and $b \approx m$), and we obtain:\\
    \[
    |T_{\text{log}}| \ge \frac{1}{\lambda} \frac{c_*}{k_2 + \sup |\gamma_b|},
    \]
for some $k_2>0$.\\
\end{enumerate}

Choosing $\lambda > 0$ sufficiently small such that $|T_{\text{log}}| > M$, the logarithmic term strictly dominates. Specifically, there exists $c > 0$ independent of $\lambda$ such that:
\[
\left| \frac{\partial \tilde F_b}{\partial y}(0, \hat{\varphi}_1) \right| \ge \frac{c}{\lambda} - M > 0.
\]

Since the derivative in the transversal direction is bounded away from zero (scaling as $\lambda^{-1}$), the map $F_b$ satisfies the transversal non-degeneracy condition \textbf{(H5)}.
\end{proof}

In the context of the return map, the strange attractor $\Lambda_b$ is ``large'' because the logarithmic term of $F_b$ provides an angular expansion that forces the images of the local sections to cover the entire circle $\mathbb{S}^1$. The limit   is a direct consequence of    \cite[Section 3]{WO2011} when $b \to (m r_\varepsilon)^+$.
 This completes the verification of the Wang--Young conditions reviewed in Theorem \ref{thm:WY} and concludes the proof of Theorem \ref{main1}. \\
\begin{remark}
The constant $c_0$ in the expansion outside the critical neighborhood (Lemma \ref{lem:misiurewicz_b_full}) is fully compatible with the recovery constant $c_0^{-1}$ and the rate $e^{\frac{1}{3}\lambda_0 p}$ obtained in Lemma \ref{lem:WY_c2_detailed}. Specifically, the expansion gained during the binding phase is sufficient to overcome the initial contraction $d_0$, ensuring that the lower bound for the derivative remains consistent across both the expanding and binding regimes. 
The existence and positive measure of the set $\Delta_\varrho$ are guaranteed by the inductive exclusion argument detailed in Lemma \ref{lem:misiurewicz_a_full}. \\
\end{remark}

\section{Proof of Theorem \ref{main2}}
\label{proof_main_2}

The proof follows the strategy developed by Ott--Wang
\cite[Theorem 3]{OW2010} and Rodrigues \cite{Rodrigues2022}. The key mechanism is that, as
$b\to (mr_\varepsilon)^+,$
one of the critical values winds around the circle
infinitely many times. This winding produces infinitely
many parameter values for which a critical point belongs
to a period-$2$ orbit, yielding superstable sinks.
Assume that \(f_b\) possesses two non-degenerate critical points
\(c_1(b)\) and \(c_2(b)\).
By Lemma~\ref{lem:nondegeneracy}, any such critical point must lie
in an \(\mathcal{O}(\lambda)\)-neighbourhood of either \(0\) or \(\pi\).
Let
$\xi_i(b)
=
f_b(c_i(b)),
i=1,2,
$
denote the corresponding critical values.
Since
$
c_2(b)
=
\pi+\mathcal O(\lambda),
$
one has
$$ \cos c_2(b)
=-1+\mathcal O(\lambda^2).$$
Using the explicit expression of the reduced map,

\[
f_b(\varphi_1)
=
\gamma_b(\varphi_1)
\bigl(
a_{11}\cos\varphi_1
+
a_{12}\sin\varphi_1
\bigr)
-
\frac1\lambda
\ln\!\bigl(
b+m\gamma_b(\varphi_1)\cos\varphi_1
\bigr),
\]
we obtain
\[
\xi_2(b)
=
-\frac1\lambda
\ln\!\bigl(
b-m\gamma_b(c_2(b))
\bigr)
+
\mathcal O(1).
\]
By  Lemma \ref{thm:invariant_graph}, we know that:
$
\gamma_b(c_2(b))
\longrightarrow
r_\varepsilon
$
as
$b\to(mr_\varepsilon)^+.$
Hence
$
b-m\gamma_b(c_2(b))
\longrightarrow0^+.
$
Consequently,
\[
-\frac1\lambda
\ln\!\bigl(
b-m\gamma_b(c_2(b))
\bigr)
\longrightarrow+\infty.
\]

Since the angular variable is defined modulo \(2\pi\), the
critical value
$\xi_2(b)$
winds around the circle $\EU^1$ infinitely many times as
$b\to(mr_\varepsilon)^+$ -- cf. Figure \ref{Imagem 7}.
Define
\[
H(b)
=
f_b(\xi_2(b))
=
f_b(f_b(c_2(b)))
\pmod{2\pi}.
\]
Since the family \(f_b\) depends continuously on the
parameter \(b\), the map
\[
H:(mr_\varepsilon,mr_\varepsilon+\rho)
\longrightarrow
\mathbb S^1
\]
is continuous. 
To make the winding argument rigorous, choose continuous lifts
$\widetilde{\xi}_2(b)$ and $ \widetilde H(b)$
of \(\xi_2(b)\) and \(H(b)\) to the universal cover
\(\mathbb R\).
Because
$
\xi_2(b)
=
-\frac1\lambda
\ln\!\bigl(
b-m\gamma_b(c_2(b))
\bigr)
+
\mathcal O(1),
$
and
$$
b-m\gamma_b(c_2(b))
\longrightarrow0^+,
$$
it follows that
$
\widetilde{\xi}_2(b)
\longrightarrow+\infty
$ as $b\to(mr_\varepsilon)^+.
$
Hence the lifted critical value crosses every interval of
length \(2\pi\) infinitely many times.
Since \(f_b\) is a non-constant circle map, the lifted image
$
\widetilde H(b)
$
also winds around the real line infinitely many times as
\(b\to(mr_\varepsilon)^+\).
On the other hand, the critical point
$
c_2(b)
=
\pi+\mathcal O(\lambda)
$
remains confined to a bounded neighbourhood of \(\pi\).

\begin{figure}[ht]
\begin{center}
 \includegraphics[height=8.5cm]{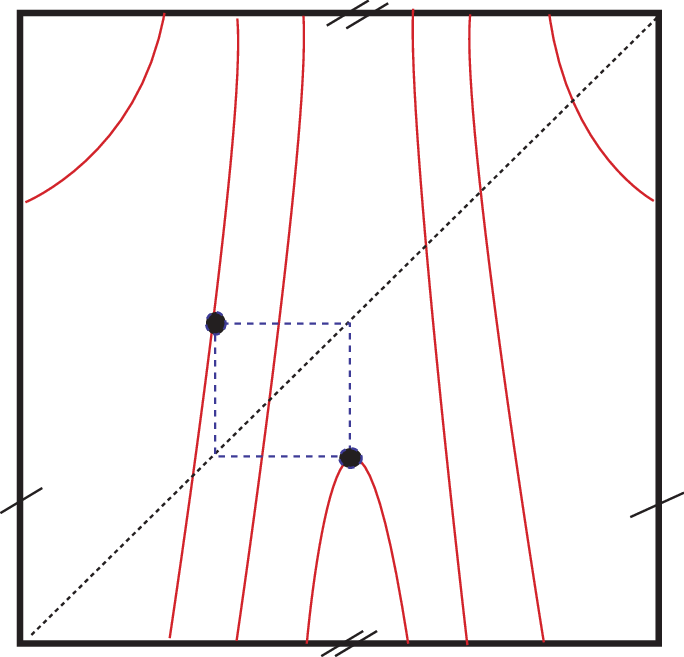}
\end{center}
\caption{\small  Creation of a sink: 2-periodic orbit associated to a critical point of $f_b: \EU^1\to \EU^1$. Bars mean that the sides are identified. }
 \label{Imagem 7}
\end{figure}

Choose a continuous lift
$
\widetilde c_2(b)
$
of \(c_2(b)\).
Define
$
\widetilde G(b)
=
\widetilde H(b)-\widetilde c_2(b).
$
The function \(\widetilde G\) is continuous.
Since \(\widetilde H(b)\) winds around the circle
infinitely many times whereas \(\widetilde c_2(b)\)
remains bounded, the function \(\widetilde G(b)\)
assumes arbitrarily large positive and negative values
in every sufficiently small neighbourhood of
\(mr_\varepsilon\).
Therefore, by the Intermediate Value Theorem,
\(\widetilde G\) possesses infinitely many zeros.
Consequently, there exists a sequence
$
(b_n)_{n\ge1}
$
such that
$
b_n
\longrightarrow
(mr_\varepsilon)^+
$
and
$
\widetilde H(b_n)
=
\widetilde c_2(b_n).
$

Projecting onto the circle yields
$
H(b_n)
=
c_2(b_n)
\pmod{2\pi}.
$
Equivalently, as suggested by Figure \ref{Imagem 7}, one has:
$$
f_{b_n}^2(c_2(b_n))
=
c_2(b_n)
\pmod{2\pi}.
$$
Since both maps
$
b\mapsto \xi_2(b)
$ and $b\mapsto c_2(b)
$
are continuous, and since the relation
$
\xi_2(b)=c_2(b)
$
defines a codimension-one condition, the corresponding
parameter values are isolated.
After discarding such isolated values if necessary, we
may assume that
$
f_{b_n}(c_2(b_n))
\neq
c_2(b_n).
$
Hence
$
\{c_2(b_n),\xi_2(b_n)\}
$
forms a periodic orbit of minimal period \(2\).
The derivative of the second iterate at the critical point is given by:
$$
(f_{b_n}^2)'(c_2(b_n))
=
f'_{b_n}(\xi_2(b_n))
\,f'_{b_n}(c_2(b_n)).
$$
Since \(c_2(b_n)\) is a critical point,
$
f'_{b_n}(c_2(b_n))
=
0.
$
Therefore
$
(f_{b_n}^2)'(c_2(b_n))
=
0.
$
Thus the period-$2$ orbit is superstable for the
one-dimensional map \(f_{b_n}\).
We now lift this periodic orbit to the two-dimensional
return map
$
\mathcal R_b.
$
Let
$$
q_1
=
(\gamma_{b_n}(c_2(b_n)),c_2(b_n))
\qquad \text{and} \qquad
q_2
=
(\gamma_{b_n}(\xi_2(b_n)),\xi_2(b_n))
$$
be the corresponding points on the invariant graph.
Introduce local coordinates
$
(y,\theta),$
where \(y\) is normal to the invariant graph and
\(\theta\) is tangent to it.
Because the invariant graph is \(\mathcal R_b\)-invariant,
the derivative matrix in tangent-normal coordinates has the
triangular form
$$
D\mathcal R_{b_n}(q_i)
=
\begin{pmatrix}
\Lambda_i & 0\\
* & f'_{b_n}(\varphi_i)
\end{pmatrix},
$$
where
$
|\Lambda_i|
$
denotes the contraction rate in the normal direction.
Hence
$$
D\mathcal R_{b_n}^2(q_1)
=
D\mathcal R_{b_n}(q_2)
D\mathcal R_{b_n}(q_1)
=
\begin{pmatrix}
\Lambda_2\Lambda_1 & 0\\
* & 0
\end{pmatrix}.
$$
Theerefore the eigenvalues are
$$
\mu_1=0,
\qquad
\mu_2=\Lambda_1\Lambda_2.
$$
By Lemma~\ref{lem:radial_contraction},
$
|\Lambda_i|
\le
\kappa
<
1.
$
Consequently,
$
|\mu_2|
=
|\Lambda_1\Lambda_2|
\le
\kappa^2
<
1.
$
Thus both eigenvalues lie strictly inside the unit disc:
$$
\mu_1=0,
\qquad
|\mu_2|<1.
$$
Therefore the period-$2$ orbit is a stable sink for the
two-dimensional return map $\mathcal R_{b_n}$. Since one eigenvalue vanishes identically, the sink is
superstable. This proves Theorem~\ref{main2}.

 \section{Discussion and concluding remarks}
\label{sec:concluding_remarks}

In this paper, we have provided a rigorous mathematical proof of the conjecture formulated by Bosch and Simó \cite{BS93} concerning the abundance of ``large'' strange attractors in the unfolding of a Shilnikov--Hopf bifurcation. By verifying the hypotheses of the Wang--Young criterion (reviewed in Theorem \ref{thm:WY}), we have established that the chaotic dynamics observed numerically in earlier studies are a persistent feature of these codimension-two global bifurcations. In terms of the flow \eqref{eq:ivp}, we may conclude that the first return map to a cross-section of $f_{\varepsilon, b} \in \mathcal{X}_{SH}$ may be $C^3$-densely approximated by another exhibiting ``large'' strange attractors or superstable sinks, provided $\varepsilon < \alpha$, $m \ll \lambda \ll 1$, and $b \gtrsim m r_\varepsilon$.

The strange attractors identified herein are  physically observable in the phase space---with basins of attraction of positive Lebesgue measure---and \emph{abundant} in the parameter space. These attractors are statistically robust and support SRB measures, which distinguishes the Shilnikov--Hopf scenario from more transient or fragile forms of chaos. Our findings are valid for the $\mathbb{Z}_2$-symmetric vector field presented by Freire \emph{et al.} \cite{Freire1993}.

The essence of our construction lies in the reduction of the three-dimensional flow \eqref{eq:ivp} to an effective rank-one discrete dynamical system (Proposition \ref{thm:0}). This reduction highlights the intricate interaction between local Hopf instabilities and global homoclinic phenomena. In contrast to the findings of \cite{hom}, we have shown that the resulting attractors are ``large'' in the sense of  \cite{BST98}; they are not located within a small disk-like neighbourhood of a fixed point but instead wind around the unstable manifold $W^u(C_\varepsilon)$ -- this two-dimensional manifold plays the role of the torus of \cite{RodriguesCastro2023, Rodrigues2022}. Since the divergence of \eqref{local_flow} is given by $2\varepsilon + \lambda-4\alpha r^2$, the attractor cannot be arbitrarily close to $W^s_\loc(C_\varepsilon)$ (cf. \cite[pp. 222]{BS93}). 
 Our findings agree well with the numerical results presented in Figures 4, 5, and 8 of \cite{BS93}.

\subsection*{The hypotheses and conditions}

There are two types of conceptual hypotheses involved in this study. Properties \textbf{(P1)--(P4)} define the configuration of the Shilnikov--Hopf bifurcation and have been fundamental in constructing the first return map $\mathcal{R}_b$ of Proposition \ref{thm:0}. They correspond to the Configuration (a) of \cite{BS93}, one of the expected scenarios for the Shilnikov-Hopf bifurcation.
 The non-uniform expansion and the resulting chaotic regimes of $\mathcal{R}_b$ are governed by the interplay of the conditions $m \ll \lambda \ll 1$ and $b \gtrsim m r_\varepsilon$, extensively used in Sections \ref{s: Preparatory} and \ref{s: f_b is M}. Their dynamical implications are summarized as follows:\\

\begin{itemize}
    \item \textbf{Strong radial contraction ($m \ll \lambda$):} This condition constitutes the required contraction for the reduction to a 1D map, as used in Lemmas \ref{lem:radial_contraction} and \ref{prop:reduction}. It ensures that the radial contraction rate towards the invariant curve is significantly faster than the angular expansion. This effectively ``crushes'' the two-dimensional spirals of $\Sigma_{out}$ into the invariant graph $\gamma_b$.\\
    
    \item \textbf{Singular sensitivity ($\lambda \ll 1$):} Small values of $\lambda$ make the ``time of flight'' $\tau \sim \ln(1/|z|)$ (cf. \eqref{Time of Flight}) extremely sensitive to the entry coordinate $z$. This leads to a 1D map with very high derivatives and sharp peaks, facilitating the existence of \emph{Misiurewicz parameters} \cite{WY2006} where the system is purely chaotic and ``sink-free'' for a set of parameters with positive measure (cf. Lemma \ref{lem:misiurewicz_a_full}).\\
    
    \item \textbf{Reinjection geometry ($b \gtrsim m r_\varepsilon$):} This condition is twofold. First, it governs the geometry of reinjection, avoiding homoclinic tangles and leading to the creation of recurrence. Secondly, it allows for sufficient expansion in the first derivative of the reduced map $f_b$.\\
\end{itemize}

Our proof differs from that presented in \cite{Rodrigues2022} whose starting point is a two-dimensional attracting torus. By adding a term that breaks the rotational symmetry, the authors obtained rotational horseshoes and subsequently strange attractors by varying the imaginary part of the saddle-foci. In our case, the  regime  $b \gg m r_\varepsilon$ does not necessarily ensure recurrence. However, as $b \to (m r_\varepsilon)^+$, two-dimensional chaotic structures emerge, shadowing $W^u_{\text{loc}}(C_\varepsilon)$. The dynamical transition from $b \gg m r_\varepsilon$ to $b \to (m r_\varepsilon)^+$ involves the formation of suspended horseshoes as in the ``Torus-Breakdown Theory''. Accompanying this process, saddle-node and period-doubling bifurcations are expected to occur, alongside homoclinic tangencies, Newhouse phenomena, and ``small'' strange attractors as in \cite{Rodrigues2020} for instance.  This has been described by \cite[pp. 227]{BS93}.

\subsection*{Straightforward generalisation}

While we have focused on Configuration (a) of \cite[p. 226]{BS93}, the remaining cases:
\begin{itemize}
\item[(b)] $S_+^1 \to S_+^1$ and $S_-^1 \to S_+^1$;
\item[(c)] $S_+^1 \to S_-^1$ and $S_-^1 \to S_-^1$; and 
\item[(d)] $S_+^1 \to S_-^1$ and $S_-^1 \to S_+^1$
\end{itemize}
may be studied using a similar analytical framework. For cases (b) and (c), it suffices to analyze the map after a single iteration. Case (d) is analytically more involved, as it requires the study of the composition of two maps on the circle, yet it follows the same underlying rank-one principles.

\subsection*{About Theorem \ref{main2}}

A key feature of our analysis is the bridge between chaotic regimes and ordered dynamics. We established the existence of sequences of parameter values $(b_n)_{n \ge 1}$ for which the system possesses superstable periodic orbits. These orbits, characterised by the vanishing of the derivative along the expanding direction at a critical point of $f_b$, accumulate at the boundary of the chaotic regions as $b \to (m r_\varepsilon)^+$. As in \cite{OW2010, Rodrigues2022}, this reinforces the view of rank-one strange attractors. 

The sinks of Theorem  \ref{main2} are conceptually different from those of \cite{BS93}. In their case, sinks arise from the destruction and formation of horseshoes as $b$ varies. In particular, they are associated to saddle-node bifurcations. In our case, they appear as a periodic point of period 2, a technique already used by \cite{hom, OW2010}. The periodic orbits are hyperbolic and persist for ``small'' intervals around $b_n$, the so-called \emph{windows of stability}. 
 \subsection*{Conjecture}

We conclude by addressing the case $b \in (0, m r_\varepsilon)$. If $b + r_1 m \cos \varphi_1 = 0$, then the return map $\mathcal{R}_b$ is not well-defined on the entire annulus $\mathcal{D}$, corresponding to the existence of orbits that fall into $W^s(C_\varepsilon)$ (cf. \cite[p. 222]{BS93}). In this regime, the ellipse $\Psi(W^u_{\text{loc}}(C_\varepsilon)\cap \Sigma_{out})$ intersects the local stable manifold $W^s_{\text{loc}}(C_\varepsilon)\cap \Sigma_{in}$, giving rise to homoclinic tangencies and complex homoclinic tangles \cite{kuznetsov1998elements}. Using the techniques developed in \cite{RodriguesCastro2023, TW2012}, it should be possible to prove a result of the same flavour as Theorem \ref{main1} for this regime. However, such a proof must manage logarithmic singularities that lie outside the domain of the first return map.

\subsection*{Concluding remarks}

The framework established here for the Shilnikov--Hopf configuration underscores the versatility of rank-one theory when addressing concrete global bifurcations. Our analysis suggests that the emergence of ``large'' strange attractors and the existence of SRB measures are ubiquitous features of codimension-two scenarios where strong transverse contraction competes with singular angular expansion. 
As in \cite{BS93}, we have disregarded higher-order terms to focus on the leading-order dynamics of $\mathcal{R}_b$. However, following the same reasoning of \cite[Prop. 2.1]{WY2003}, we conjecture that our main results remain valid if we consider these remainders. 

This study provides a systematic methodology for characterising observable chaos in dissipative systems near a homoclinic network. Building upon the foundational observations in \cite{Argoul1987, Freire1993}, a prominent open challenge remains the analytical location  of Shilnikov--Hopf bifurcations and their associated ``large'' strange attractors within the unfolding of singularities with triple zero eigenvalues (cf. \cite[Eq. (28)]{HK1993}). We defer the exploration of this higher-codimension regime to future research.

Another natural extension would be to investigate the \emph{stochastic stability} of these attractors under small random perturbations, as well as the transition from period-doubling cascades to the observed Misiurewicz maps.

\section*{Declarations}
\subsection*{Data Availability.} The manuscript has no associated data.
\subsection*{Conflict of interest} The author has no Conflict of interest to declare that are relevant to the content of this article.
 \subsection*{Acknowledgments}
The  author is grateful to Carles Sim\'o for giving him this topic of research in 2017 in Barcelona, during a seminar. Also thanks to him for useful comments. Special thanks to Isabel Labouriau for the fruitful discussions. Also special thanks to Lorenzo Díaz and Christian Bonatti for pointing the author out about Theorem C and Section 5 of \cite{DRV}, in University of Oviedo.

\end{document}